\documentclass[10pt]{amsart}
\usepackage{amsfonts}
\usepackage{amssymb}
\usepackage{amsmath}
\usepackage[arrow,matrix,curve]{xy}

\sloppy






\newcommand{\C}{{\mathbb C}}
\newcommand{\R}{{\mathbb R}}
\newcommand{\Z}{{\mathbb Z}}
\newcommand{\N}{{\mathbb N}}

\newcommand{\E}{{\mathbb E}}

\newcommand{\K}{{\mathbb K}}





\newtheorem{theorem}{Theorem}[section]
\newtheorem{lemma}[theorem]{Lemma}
\newtheorem{remark}[theorem]{Remark}

\newtheorem{corollary}[theorem]{Corollary}
\newtheorem{proposition}[theorem]{Proposition}
\newtheorem{definition}[theorem]{Definition}

\newtheorem{conjecture}[theorem]{Conjecture}

\def\cal{\mathcal}

\newcommand{\calh}[0]{{\cal H}}

\newcommand{\calf}[0]{{\cal F}}
\newcommand{\call}[0]{{\cal L}}
\newcommand{\calg}[0]{{\cal G}}

\newcommand{\cala}[0]{{\cal A}}
\newcommand{\cale}[0]{{\cal E}}
\newcommand{\calk}[0]{{\cal K}}

\newcommand{\calp}[0]{{\cal P}}
\newcommand{\cali}[0]{{\cal I}}
\newcommand{\cals}[0]{{\cal S}}
\newcommand{\calm}[0]{{\cal M}}

\newcommand{\calc}[0]{{\cal C}}

\newcommand{\calt}[0]{{\cal T}}
\newcommand{\calj}[0]{{\cal J}}

\begin{document}

\title[Attempts to define a Baum--Connes map]{Attempts to define a
Baum--Connes map via localization of categories for inverse semigroups}
\author[B. Burgstaller]{Bernhard Burgstaller}
\address{Departamento de Matematica, Universidade Federal de Santa Catarina, CEP 88.040-900 Florian\'opolis-SC, Brasil}
\email{bernhardburgstaller@yahoo.de}
\subjclass{19K35, 20M18, 18E30, 46L55}
\keywords{}

\begin{abstract}
Meyer and Nest showed that the Baum--Connes map is equivalent to a map
on $K$-theory of two different crossed products. 
This approach is strongly categorial in method
since its bases is to regard Kasparov's theory $KK^G$ as a triangulated category.
We have tried to translate this approach to the realm of inverse semigroup equivariant $C^*$-algebras 
but can prove the existence of a Baum--Connes map only under some unverified additional assumptions which we
however strongly motivate. 
Some of our results may be of independent interest,
for example Bott periodicity, the definition of induction functors, the definition of a completely novel compatible $L^2(G)$-space,
a Cuntz picture of $KK^G$, and the verification that $KK^G$ is a triangulated category.
%
%
%
\end{abstract}

\maketitle



\section{Introduction}

In \cite{meyernest}, Meyer and Nest found an alternative description
of the Baum--Connes map
$\lim_{Y \subseteq \underline{E} G} KK (C_0(Y),A) \rightarrow K(A \rtimes_r G)$
with coefficients \cite{baumconneshigson1994}, 
where $G$ denotes a locally compact, second countable group and $A$ a $G$-algebra.
(It was even achieved for groupoids of the form $G \ltimes X$.)
Fundamental for this approach is a work by Chabert and Echterhoff \cite{chabertechterhoff2001},
and the nontrivial ``observation" that Kasparov's category $KK^G$ may be viewed as a triangulated category.
By using Brown's representability theorem for triangulated categories \cite{neeman1996}, a weakly isomorphic,
so-called Dirac
element $D \in KK^G(B,A)$
is constructed such that $B$ is a $G$-algebra in the localizing subcategory 
of $KK^G$ generated
by $G$-algebras of the form $\mbox{Ind}_H^G(F)$ (induction in the sense of Green \cite{green1978}) for a compact subgroup $H \subseteq G$ and $H$-algebra $F$.
If $G$ is compact then one will choose $B= \mbox{Ind}_G^G(A) = A$ and $D=id$, and for non-compact $G$
one hopes that the compactly induced algebras approximate $A$ sufficiently enough via $D$,
like one approximates 
functions vanishing at infinity
by compactly supported functions.
%
The Baum--Connes map turns out to be equivalent to the map $K(B \rtimes_r G) \rightarrow K(A \rtimes_r G)$
induced by $j^G_r(D) \in KK(B \rtimes_r G, A \rtimes_r G)$ for the descent homomorphism $j^G_r$.
Clearly, if for example the morphism
$D$ was an isomorphism then the functor image $j_r(D)$
would be an isomorphism as well and the Baum--Connes map bijective.

Let us observe the usefulness 
of this approach. Assume
for the moment that $B$ takes the particular simple form
$B= \mbox{Ind}_H^G(F)$. Then the left hand side of the new formulated Baum--Connes map is potentially computable
via
\begin{equation}    \label{equBCgreenjulg}
K(B \rtimes_r G) = K\big(\mbox{Ind}_H^G(F) \rtimes_r G \big) \cong K(F \rtimes_r H) \cong KK^H(\C,F)
\end{equation}
by Green's imprimitivity theorem \cite{green1978} and the Green--Julg isomorphism \cite{julg1981}.
Arbitrary $B$ might then be treated by homological means in triangulated categories.

In this paper we try to adapt the above method to unital, countable inverse semigroups $G$.
The compact subgroups are then the finite subinverse semigroups $H \subseteq G$.
In a former paper, \cite{burgiGreen}, we proved a Green imprimitivity theorem $\mbox{Ind}_H^G(F) \widehat{\rtimes} G \cong
F \widehat{\rtimes} H$ (Sieben's crossed product \cite{sieben1997}) for such $H$s.
Together with the Green--Julg isomorphism for inverse semigroups we get 
an analog identity to
(\ref{equBCgreenjulg}). 
The next fundamental step is to show that Kasparov's category $KK^G$ is a triangulated category.
Most of this goes literally through as in Meyer and Nest's paper \cite{meyernest}, and we collect the
definitions and facts in Section \ref{sectionTriangulated}. 
However, there is one exception.
To achieve that every morphism of $KK^G$ fits into an exact triangle, one needs a Cuntz-picture
of $KK^G$ by representing morphisms as $*$-homomorphisms. This was done in group equivariant
$KK$-theory by Meyer \cite{meyer}, and we adapt his proof in Section \ref{sectionCuntzpicture}. 
One problem
is that we need a model of a compatible $\ell^2(G)$-space,
and to construct it we need to impose a transparent (see Lemma \ref{lemmaEcontinuity}), but properly restricting condition
on $G$ which we call $E$-continuity.
%

The next step is to define an induction functor $\mbox{Ind}_H^G:KK^H \rightarrow KK^G$ for finite
subinverse semigroups $H \subseteq G$.
We do this in Section \ref{sectionInduction}. 
In Section \ref{KKtheory} we recall 
the definitions of $KK^G$-theory and fix other notions we shall need.
In Section \ref{sectionBott} we discuss Bott periodicity for $KK^G$.
In Section 
\ref{sectionDirac}
we believed that we had defined a Dirac element $D \in KK^G(P,\C)$
by an adaption of the corresponding proof in \cite{meyernest}.
Unfortunately, however, when finishing this paper closely in this form, we have realized that we had a flaw
in the proof of the fundamental identity
\begin{eqnarray}    \label{indresadjointidentity}
{K K^G}(\mbox{Ind}_H^G \, A, B)  &\cong&  K K^H(A, \mbox{Res}_G^H \, B),
\end{eqnarray}
which holds for discrete groups, see line (20) in \cite{meyernest}.
It is even wrong, 
see Remark \ref{remarkAdjointfunctorIndRes}.
%
On a sufficiently big subcategory
there exists a right adjoint functor to $\mbox{Ind}_H^G$ by theoretical results of Neeman, see Definition \ref{defRightadjointtoInd}, but it is not the restriction functor. 
We have no concrete realization of it
and consequently we cannot analyse it like the restriction functor.

Nevertheless, in the last Section \ref{sectionDirac}
we shall work with
the correct
right adjoint functor instead of the restriction functor and prove the existence
of a Dirac morphism under yet unverified assumptions on it  
which are  
essentially proved
for the
restriction functor in Section 
\ref{sectionMotivationDiracmorphism}.
%
%
We remark that the existence of the Dirac morphism is the main obstacle. After having it, one could easily construct a Baum--Connes map as in \cite{meyernest}, see Remark \ref{remarkBC}.



In the meanwhile, we have computed a right adjoint functor for the induction functor for a special
subclass of $G$-algebras called fibered $G$-algebras in \cite{burgiNoteBaumConnes}
and established a Baum--Connes map for them.
This is however 
not the
solution to the Baum--Connes map, as fibered $G$-algebras
do not appear to be $KK^G$-equivalent to such important examples of $G$-algebras like $C_0(X)$.
The fibered restriction functor developed there however has properties also outside of the class of fibered $G$-algebras which support the assumptions
imposed on the correct right adjoint functor, 
see Remark \ref{remarkBC}.

On the way of our attempt of proving the existence of a Dirac morphism we also showed
a number of lemmas in connection with restriction and induction functors which might be of independent
interest and are collected in Section \ref{sectionLemmasResInd}.

Some arguments are not completely verified; we always point out this and
write `Conjecture' after Lemma, Theorem etc.
(exactly these are Lemma \ref{lemmaFunctorInduction}, Theorem \ref{adaptionth65}
and Definition \ref{defRightadjointtoInd}).

\section{$G$-equivariant $KK$-theory}

\label{KKtheory}

Let $G$ denote a countable unital inverse semigroup.
We write $E(G)$ (or simply $E$) for the set of projections of $G$.
We shall denote
the involution
on $G$ both by $g \mapsto g^{*}$ and $g \mapsto g^{-1}$ (determined by $g g^{-1} g = g$).
A semigroup homomorphism is said to be {\em unital} if it preserves the identity $1 \in G$
and the zero element $0 \in G$ provided $G$ has such elements, respectively.
We consider $G$-equivariant $KK$-theory as defined in \cite{burgiSemimultiKK} (in its final form
in Section 7 of \cite{burgiSemimultiKK}) but make a slight adaption by making this theory
{\em compatible} in the following sense. 
We require that all $G$-Hilbert $A,B$-bimodules $\cale$ of Kasparov cycles satisfy
$e(a) \xi = a e(\xi)$ and $\xi e(b) = e(\xi) b$ for all $e \in E, a \in A, b \in B$ and $\xi \in \cale$.
Since the only constructions of Hilbert modules in \cite{burgiSemimultiKK} out of given
ones are done by forming tensor products,
direct sums, or taking the Hilbert module $\C$, and these constructions respect these modifications,
we readily can accept this modified, compatible $KK^G$-theory to hold true
with all its properties like the existence of the Kasparov product as in \cite{burgiSemimultiKK}.
Since the additional properties of inverse semigroups as compared to semimultiplicative sets in \cite{burgiSemimultiKK} slightly simplify the formal definitions of equivariant $KK$-theory (see for instance \cite[Corollary 4.6]{burgiDescent}),
we are going to recall the polished definitions for convenience of the reader.

\begin{definition}   \label{defCstar}
{\rm
A {\em $G$-algebra} $(A,\alpha)$ is a $\Z/2$-graded $C^*$-algebra $A$ with a
unital semigroup homomorphism
$\alpha: G \rightarrow \mbox{End}(A)$ such that
$\alpha_g$ respects the grading
and $\alpha_{g g^{-1}}(x) y = x \alpha_{g g^{-1}}(y)$
for all $x,y \in A$ and $g \in G$.
}
\end{definition}


\begin{definition}   \label{defHilbert}
{\rm
A {\em $G$-Hilbert $B$-module} $\cale$ is a $\Z/2$-graded Hilbert
module over a $G$-algebra $(B,\beta)$ endowed with a unital
semigroup homomorphism $G \rightarrow \mbox{Lin}(\cale)$ (linear maps on $\cale$)
such that $U_g$ respects the grading and
$\langle U_g(\xi),U_g(\eta)\rangle = \beta_g(\langle \xi,\eta \rangle)$,
$U_g(\xi b) = U_g(\xi) \beta_g(b)$,
and
$U_{g g^{-1}}(\xi) b = \xi \beta_{g g^{-1}}( b)$
for all $g \in G,\xi,\eta \in \cale$ and $b \in B$.
}
\end{definition}

In the last definition, $U_{g g^{-1}}$ is automatically a self-adjoint projection in the center of
$\call(\cale)$,
and the action $G \rightarrow \mbox{End}(\call(\cale))$, $g(T) =
U_g T U_{g^{-1}}$ turns $\call(\cale)$ to a $G$-algebra ($g \in G$ and $T \in \call(\cale)$).
A $G$-algebra $(A,\alpha)$
is a $G$-Hilbert module over itself under the inner product $\langle a,b\rangle
= a^* b$ and
$U: = \beta := \alpha$ in the last definition.
%
%
%
%
A $*$-homomorphism between $G$-algebras is called {\em $G$-equivariant} if it intertwines
the $G$-action.
Usually the $G$-action on a $G$-algebra is denoted by $g(a):= \alpha_g(a)$.
The complex numbers $\C$ are endowed with the trivial $G$-action $g(1)=1$ for all $g \in G$.
%
%
A {\em $G$-Hilbert $A,B$-bimodule} over $G$-algebras $A$ and $B$
is a $G$-Hilbert $B$-module $\cale$ equipped with a $G$-equivariant $*$-homomorphism
$A \rightarrow \call(\cale)$.




\begin{definition}  \label{defCycle}
{\rm
Let $A$ and $B$ be $G$-algebras.
We define a Kasparov cycle $(\cale,T)$, where $\cale$ is a $G$-Hilbert $A,B$-bimodule, to be an ordinary Kasparov cycle (without $G$-action) (see \cite{kasparov1981,kasparov1988}) satisfying $U_g T U_g^* - T U_{g g^{-1}} \in \{S \in \call(\cale)|\, a S, S a \in \calk(\cale) \mbox{ for all } a \in A\}$ for all
$g \in G$. The 
Kasparov group $KK^G(A,B)$ is defined to be the collection $\E^G(A,B)$ of these cycles
divided by homotopy induced by $\E^G(A,B[0,1])$.
}
\end{definition}

We write $C^*_G$ for the category of 
$G$-algebras as objects and $G$-equivariant $*$-homomorphisms as morphisms,
and $KK^G$ for the additive category consisting of 
$G$-algebras as objects and $KK^G(A,B)$ as the morphism set
from object $A$ to object $B$, together with the Kasparov product $KK^G(A,B) \times KK^G(B,C)
\rightarrow KK^G(A,C)$ as composition of morphisms.
Define $C_G:C^*_G \rightarrow KK^G$ to be the well known functor 
which is identical on objects and satisfies
$C_G(f):= f_*(1_A) \in KK^G(A,B)$ for morphisms $f:A \rightarrow B$, where $1_A := [(A,0)] \in KK^G(A,A)$
denotes the unit.

\begin{definition}[See Definition 25 of \cite{burgiSemimultiKK}]    \label{defTau}
{\rm
For a $\sigma$-unital $G$-algebra $D$ we denote by $\tau_D:KK^G(A,B) \rightarrow KK^G(A \otimes D,B \otimes D)$
the map induced by $(\cale,T) \mapsto (\cale \otimes D,T \otimes 1)$.
}
\end{definition}

Occasionally we shall still refer to {\em incompatible} $KK^G$-theory as defined in \cite{burgiSemimultiKK}
and denote it by $IK^G$. The class of underlying $G$-Hilbert modules is richer, but the $G$-algebras are the same.
%
%
%
%
$KK^G$ and their Hilbert modules are sometimes accompanied by the word {\em compatible},
to stress the difference to $IK^G$.
It is often useful
to compare $IK^G$ and $KK^G$ by the isomorphism
$IK^G(A,B) \cong KK^G(A \rtimes E, B \rtimes E)$ from \cite[Theorem 5.3]{burgiGreenjulg}
for {\em finite} $G$.
Also remark that there exists a canonical functor $KK^G \rightarrow IK^G$
defined by the identity map on cycles.

Given a $G$-algebra $A$,
we denote by $A \rtimes G$ the universal crossed product \cite{khoshkamskandalis2004}, 
and by $A \widehat \rtimes G$ Sieben's crossed product \cite{sieben1997}.
We identify $G$ as a subset of $\C \rtimes G$, and denote by $\tilde G \subseteq \C \rtimes G$
the inverse semigroup generated by $G$ and all projections $p \in \C \rtimes G$ of the form
$p=e_0 (1 -e_1) \ldots (1-e_n)$ for $e_i \in E$ and $n \ge 0$.
Note that every element of $\tilde G$ is of the form $g p$ with $g \in G$ and $p$ as before.

Every $G$-action $\alpha$ on a $G$-algebra (or $G$-Hilbert module) extends to a $\tilde G$-action
by linearity, that is, $\alpha_{g p} = \alpha_g \alpha_{e_0} (\alpha_1-\alpha_{e_1}) \ldots (\alpha_1-\alpha_{e_n})$, where
$p$ is as before (see \cite[Lemma 2.1]{burgiGreen}). We sometimes extend $G$-actions to $\tilde G$-actions in this way implicitly without saying.
%
%
%
%
%
We shall also consider discrete groupoids $H \subseteq \tilde G$, and we may regard them as inverse semigroups
$H \cup \{0\} \subseteq \tilde G$ with zero element 
in order to consistently redefine the known notion of
$H$-equivariant $KK$-theory $KK^H$ via the inverse semigroup $H \cup \{0\}$, 
where $0$ is understood to act always as zero.
%
Provided 
is here
however that the $H$-algebras are defined in the groupoid sense,
that is, that they are also $C_0(H^{(0)})$-algebras, 
see \cite[Definition 1.5]{kasparov1988}. (Cf. also \cite{burgiKKrDiscrete}.)
%
%

Let $G \subseteq L \subseteq \tilde G$ be a subinverse semigroup.
Then we have
\begin{equation}   \label{equKKtildeG}
KK^G(A,B) = KK^{L}(A,B) = KK^{\tilde G}(A,B)
\end{equation}
via the identity map on cycles when using the 
above mentioned extension of $G$-actions for all $G$-algebras $A$ and $B$.
(A $\tilde G$-Hilbert $B$-module inherits the linearly extended $\tilde G$-action from $B$
by compatibility.)
%
%
Denote by $X$ or $X_G$ the totally disconnected, locally compact Hausdorff space such that $C_0(X)$ 
is the universal commutative $C^*$-algebra $C^*(E)$ generated by the commuting projections $E$.
(Actually $X$ is compact since $E$ is unital.) $C_0(X)$ is endowed with the $G$-action $g(1_e)= 1_{g e g^*}$ for
$e \in E$ and $g \in G$.
Every $G$-algebra $A$ may be regarded as a $C_0(X)$-algebra (see Kasparov \cite[Section 1.5]{kasparov1988}) by $\pi:C_0(X) \rightarrow Z(\calm(A))$
with $\pi(1_e)(a)=e(a)$ since $E$ has a unit. 
Write $A \otimes^X B$ 
for the balanced tensor product ($A \otimes B$ divided by all elements of the form $e(a) \otimes b - a \otimes e(b)$
where $e \in E$),
see Le Gall \cite{legall1999} or \cite[Section 1.6]{kasparov1988}.


%
%

\begin{definition}
{\rm
The {\em groupoid $H \subseteq \tilde G$ associated to a given finite subinverse semigroup $H' \subseteq G$} is defined to be
the finite groupoid
$H=\{h p \in \tilde G\,|\, h \in H', p \in E(\widetilde{H'}) \mbox{ is a minimal projection}, \, h^*h \ge p\}$.
}
\end{definition}
Observe that $KK^{H'}(A,B) = KK^H(A,B)$ for all $H'$-algebras or $H$-algebras $A$ and $B$
by the equivalence of $C^*_{H'}$ and $C^*_{H}$, and $KK^{H'}$ and $KK^H$, respectively, see
\cite{burgiKKrDiscrete}. (Our notion $KK^{H'}$ coincides with $\widehat{KK^{H'}}$ of \cite{burgiKKrDiscrete}.)
All subinverse semigroups of $G$ are assumed to contain the {\em unit} of $G$!
By regarding $G$ as a discrete inverse semigroup,
we often say compact instead of finite subinverse semigroup.

\section{Bott periodicity}

\label{sectionBott}

This section works both in $IK^G$ and $KK^G$.

\begin{definition}
{\rm
Define $KK^G_n (A,B) := KK^G(A \otimes C_{n,0},B)$,
where $C_{n,m}$ denotes the Clifford algebras of Kasparov \cite[Sections 2.11 and 2.13]{kasparov1981}
for $n,m \ge 0$.
(The $G$-action on $C_{n,0}$ is trivial.)
}
\end{definition}


\begin{theorem}[Bott periodicity]     \label{theoremBottperiodicity}

Let the $G$-action on $C_0(\R^n)$ be trivial.
Then
$$KK^G_{i+n}(A \otimes C_0(\R^n),B)  \cong   KK^G_{i}(A,B)
 \cong  KK^G_{i-n}(A,B \otimes C_0(\R^n))$$

\end{theorem}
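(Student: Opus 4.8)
The plan is to reduce the statement to the two standard invertibility facts of Kasparov's theory and then to observe that, because every algebra carrying a suspension or a Clifford factor here is equipped with the \emph{trivial} $G$-action, the classical Dirac and Bott elements are equivariant and the product identities defining them survive verbatim in $KK^G$.

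First I would record the Clifford bookkeeping: the graded isomorphisms $C_{i+n,0} \cong C_{i,0} \otimes C_{n,0}$ and $C_{n,0} \otimes C_{0,n} \cong C_{n,n} \cong M_{2^n}(\C)$, the latter yielding a $KK^G$-equivalence $C_{n,0} \sim C_{0,n} \sim \C$ through the evident Morita cycle (with trivial action). These reduce both claimed isomorphisms to the single \emph{Bott periodicity equivalence} $C_0(\R^n) \otimes C_{n,0} \sim_{KK^G} \C$ (in Kasparov's normalization of the Clifford generators). Indeed, substituting $C_{i+n,0} \cong C_{i,0} \otimes C_{n,0}$ into the left-hand group and rearranging the commuting tensor factors turns $KK^G_{i+n}(A \otimes C_0(\R^n), B)$ into $KK^G(A \otimes C_{i,0} \otimes (C_0(\R^n) \otimes C_{n,0}), B)$; for the right-hand group the Clifford shift $KK^G(X, Y \otimes C_{0,n}) \cong KK^G(X \otimes C_{n,0}, Y)$, together with $C_0(\R^n) \sim C_{0,n}$ and $C_{i-n,0} \otimes C_{n,0} \cong C_{i,0}$, does the analogous job.

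Next I would produce the equivalence itself. Let $\alpha \in KK^G(C_0(\R^n) \otimes C_{n,0}, \C)$ be the Dirac element (built from the Dirac operator on $L^2(\R^n)$, with $C_0(\R^n)$ acting by multiplication and $C_{n,0}$ by Clifford multiplication) and $\beta \in KK^G(\C, C_0(\R^n) \otimes C_{n,0})$ the Bott element, as in \cite{kasparov1981}. Since $G$ acts trivially on $C_0(\R^n)$, on the Clifford algebras and on $\C$, one may take $U_g = \mathrm{id}$ throughout, whereupon the equivariance condition $U_g T U_g^* - T U_{g g^{-1}} \in \{S \in \call(\cale)\,|\, a S, S a \in \calk(\cale)\}$ of Definition \ref{defCycle} and the compatibility requirements of Definition \ref{defHilbert} hold trivially; thus $\alpha$ and $\beta$ are genuine classes in $KK^G$. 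The identities $\alpha \otimes_\C \beta = 1$ and $\beta \otimes \alpha = 1$ of classical Bott periodicity then lift from $KK$ to $KK^G$, because the Kasparov product of trivially acted cycles is computed by the very same connections and operators, and the homotopies over $\C[0,1]$ realizing the non-equivariant identities are homotopies of trivially acted cycles. Applying the external-product functor $\tau$ of Definition \ref{defTau} to $\alpha$ and $\beta$ and composing via the Kasparov product produces the two maps between the groups; associativity of the product and the two identities show they are mutually inverse, and the Clifford shifts identify the outcome with $KK^G_i(A,B)$ in each case.

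The step I expect to be the genuine obstacle is exactly this lifting of the product identities to the equivariant category: \emph{a priori} $\alpha \otimes \beta = 1$ is known only non-equivariantly, and the forgetful functor $KK^G \rightarrow IK^G \rightarrow KK$ need not be injective, so one cannot merely quote the classical result. The resolution is that the trivial action forces the equivariant product and its realizing homotopies to coincide with the non-equivariant ones; this is precisely the point where the compatibility conventions of the present $KK^G$-theory (Definitions \ref{defCstar}--\ref{defHilbert}) must be verified to be respected, and fortunately the trivial action satisfies them automatically, so neither $E$-continuity nor any further hypothesis on $G$ enters.
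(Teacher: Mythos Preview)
Your argument is correct and rests on the same idea as the paper's: Kasparov's Dirac and Bott elements and their product identities involve only algebras on which $G$ acts trivially, so they are available in $KK^G$. The difference lies in how the identities are imported. You start from non-equivariant $KK$ and argue that, because every cycle, product and homotopy in sight is trivially acted upon, the identities $\beta\otimes\alpha=1$ and $\alpha\otimes\beta=1$ hold verbatim in $KK^G$; you correctly flag that the forgetful functor $KK^G\to KK$ is not injective, so a bare appeal to the non-equivariant result would not suffice, and you resolve this by inspecting the homotopies. The paper instead starts one level higher, from Kasparov's $Spin(V)$-\emph{equivariant} identities (his $\beta_V\otimes\alpha_V=c_1$ and $\beta_V\otimes_\C\alpha_V=\tau_{C_0(\R^n)\otimes C_V}(c_1)$ in $KK^{Spin(V)}$), and transports them to $KK^G$ via the functor induced by the trivial homomorphism $triv:G\to Spin(V)$, $g\mapsto 1$. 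Since a functor preserves composition and identities, the product relations carry over automatically, with no separate discussion of lifting homotopies. Your route is more elementary and self-contained; the paper's is a one-line functorial transport that sidesteps the injectivity worry entirely.
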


\begin{proof}
%

The proof is a slight adaption of Kasparov's \cite[\S 5, Theorem 7]{kasparov1981}.
Note that Kasparov discusses in his proof the ``real" case to be definite, and so
our $\R^n$ appears as $\R^{p,q}$ in his proof; so we ``identify" these two.
In line (4) on page 547 of \cite{kasparov1981} he states that there exists
elements $\beta_V \in KK^{Spin(V)}(\C,C_0(\R^n) \otimes C_V)$
and $\alpha_V \in KK^{Spin(V)}(C_0(\R^n) \otimes C_V,\C)$
such that
\begin{equation} \label{tintan}
a)\; \beta_V \otimes_{C_0(\R^n) \otimes C_V} \alpha_V = c_1; \qquad
b)\; \beta_V \otimes_{\C} \alpha_V = \tau_{C_0(\R^n) \otimes C_V}(c_1),
\end{equation}
where $c_1 := (id, \C,0) \in KK^{Spin(V)}(\C,\C)$ is the unit element, and the Kasparov products
in (\ref{tintan}) are the Kasparov's cup-cap product.
As Kasparov remarks, a direct application of (\ref{tintan}) to \cite[\S 4, Theorem 6, 2)]{kasparov1981}
yields the desired Bott periodicity \cite[\S 5, Theorem 5]{kasparov1981}.

We now regard $\beta_V$ and $\alpha_V$ as elements in $G$-equivariant $KK$-theory $KK^G$
by putting them into the canonical map $KK^{Spin(V)}(C,D) \rightarrow KK^G(C,D)$ ($\forall C,D$) by regarding
$Spin(V)$-Kasparov cycles as $G$-Kasparov cycles via the trivial semigroup homomorphism $triv:G \rightarrow Spin(V): g \mapsto 1$
($\forall g \in G$).
We can then also apply (\ref{tintan}) to \cite[\S 4, Theorem 6, 2)]{kasparov1981}, but now in the $G$-equivariant
setting.
\end{proof}

\begin{corollary}
We have
$KK^G(A \otimes C(\R^2),B) \cong KK^G(A,B) \cong KK^G(A,B \otimes C(\R^2))$ for all $G$-algebras $A$ and $B$.
\end{corollary}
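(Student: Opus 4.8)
The plan is to deduce the Corollary directly from the Bott periodicity Theorem \ref{theoremBottperiodicity} by specializing the dimension and absorbing the Clifford algebra indices.

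\begin{proof}
First I would recall that, by definition, $KK^G_0(A,B) = KK^G(A \otimes C_{0,0}, B) = KK^G(A,B)$ since $C_{0,0} = \C$. Thus the statement to be proved is precisely the case $i=0$, $n=2$ of Theorem \ref{theoremBottperiodicity}, once I account for the discrepancy between $\R^2$ and its complexification $\C \cong \R^2$ in the notation and for the shift in the Clifford degree index. Setting $i=0$ and $n=2$ in the theorem yields the chain
\begin{equation*}
KK^G_{2}(A \otimes C_0(\R^2),B) \cong KK^G_{0}(A,B) \cong KK^G_{-2}(A,B \otimes C_0(\R^2)).
\end{equation*}
The remaining task is to remove the degree subscripts $2$ and $-2$, i.e.\ to show $KK^G_{\pm 2} \cong KK^G_0$, which is exactly the statement that tensoring with the rank-two Clifford algebra $C_{2,0}$ (respectively $C_{0,2}$) is invisible to $KK^G$.

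Second I would invoke the internal Bott isomorphism for the Clifford algebras, which is built into the same adaptation of Kasparov's argument already carried out in the theorem: the elements $\alpha_V,\beta_V$ of \eqref{tintan}, pushed into $KK^G$ via the trivial homomorphism $triv:G \rightarrow Spin(V)$, furnish inverse-to-each-other $KK^G$-classes whose Kasparov products collapse the Clifford factor. Concretely, $C_{2,0} \otimes C_{0,2} \cong M_4(\C) \sim \C$ in $KK^G$ with trivial action, so that $\tau_{C_{n,0}}$ and the corresponding operation for $C_{0,n}$ are mutually inverse. Applying this with $n=2$ gives the stability
\begin{equation*}
KK^G_{2}(C,D) = KK^G(C \otimes C_{2,0},D) \cong KK^G(C,D) = KK^G_0(C,D),
\end{equation*}
and symmetrically $KK^G_{-2} \cong KK^G_0$, for any $G$-algebras $C,D$. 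Substituting these identifications into the displayed chain above, with $C = A \otimes C_0(\R^2)$, $D=B$ on the left and $C=A$, $D = B \otimes C_0(\R^2)$ on the right, produces the desired isomorphism $KK^G(A \otimes C_0(\R^2),B) \cong KK^G(A,B) \cong KK^G(A,B \otimes C_0(\R^2))$.

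The only point requiring care — and the main obstacle — is bookkeeping rather than substance: one must confirm that the trivially-induced classes $\alpha_V,\beta_V$ still satisfy the unit-product relations \eqref{tintan} in the \emph{compatible} theory $KK^G$, and that the identification $\R^2 \leftrightarrow \R^{p,q}$ with $p+q=2$ used in the theorem's proof is compatible with the Clifford periodicity $C_{2,0}\otimes C_{0,2}\sim\C$ I am invoking here. Since the $G$-action on every Clifford factor and on $C_0(\R^2)$ is trivial, the compatibility condition $e(a)\xi = ae(\xi)$ of Section \ref{KKtheory} is automatic on these tensor factors, so no genuine equivariant difficulty arises; the argument reduces to the classical two-dimensional Bott periodicity carried along unchanged by the trivial homomorphism.
\end{proof}
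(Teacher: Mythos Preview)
Your proposal is correct and follows the paper's route: specialize Theorem \ref{theoremBottperiodicity} to $i=0$, $n=2$ and then absorb the Clifford index shift via the formal periodicity $KK^G_{n+2}\cong KK^G_n$ (the paper simply cites \cite[Theorem 5.5]{kasparov1981} and remarks that it carries over with trivial $G$-action). One small quibble: that formal periodicity step comes from the elementary Morita equivalence $C_{2,0}\cong M_2(\C)$ you invoke in the next sentence, not from re-using the elements $\alpha_V,\beta_V$ of (\ref{tintan}) (those already went into Theorem \ref{theoremBottperiodicity} to handle the $C_0(\R^n)\otimes C_V$ factor); once that misattribution is trimmed your argument coincides with the paper's.
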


\begin{proof}
The Clifford algebra $C_{0,0}$ is $\C$, so that $KK_0^G$ is simply $KK^G$.
The result follows then from Theorem \ref{theoremBottperiodicity} and the formal
Bott periodicity \cite[Theorem 5.5]{kasparov1981} (which works literally in our setting as
the $G$-actions on the vector spaces $V$ appearing there are trivial), which states that $KK_n$ is periodic in $n$
with period $2$.
%
\end{proof}

\section{Induction and restriction functors}

\label{sectionInduction}


Given a compact subinverse semigroup $H' \subseteq G$,
in \cite{burgiGreen} we defined an induced algebra 
and showed
Green imprimitivity theorems. This was done by switching at first from $H'$ to its
associated finite subgroupoid $H \subseteq \tilde G$, proving everything for $H$,
and at the end switching back to $H'$ in notation. That $H$ was induced by an inverse semigroup
was extraneous. Hence we may, and shall, start here somewhat more generally with a finite groupoid
like in Definition \ref{defInductionAlg} below and still can use the results from \cite{burgiGreen}.

Before we need however fix some notions.
For an assertion $\cala$ we let $[\cala]$ be the real number $0$ if $\cala$ is false, and $1$ if $\cala$ is true.
Let $H \subseteq \tilde G$ be a finite subgroupoid.
Set
$$G_H:= \{g p \in \tilde G\,|\, g \in G,\, p \in H^{(0)},\, g^* g \ge p\}.$$
We endow $G_H$ with an equivalence relation:
$g \equiv h$ if and only if there exists $t \in H$ such that $g t = h$ ($g,h \in G_H$).
We denote by $G_H/H$ the discrete, set-theoretical quotient of $G_H$ by $\equiv$. 
%
The delta function $\delta_g$ in $C_0(G_H)$ and $C_0(G_H/H)$ is denoted by $g$ ($g \in G_H$).
The commutative $C^*$-algebras $C_0(G_H)$ and $C_0(G_H/H)$ are endowed with the $G$-action $g(h):= [g h \in G_H] \,\, g h$, where $g \in G$ and $h \in G_H$
(of course, $g h \in G_H$ is equivalent to $g^* g \ge h h^*$).
%


\begin{definition}   \label{defInductionAlg}
{\rm
Let $H \subseteq \tilde G$ be a finite subgroupoid
and $D$ a $H$-algebra.
Define, similar as in \cite[\S 5 Def. 2]{kasparov1995},
\begin{eqnarray*}
\mbox{Ind}_{H}^G(D)  &:=&  \{ f: G_H \rightarrow D \, | \,\, \forall g \in G_H, t \in H \mbox{ with } gt \in G_H: f(g t) = t^{-1}(f(g)),\\ 
&& \qquad \|f(g)\| \rightarrow 0 \mbox{ for } g H \rightarrow \infty \mbox{ in } G_H/H \, \}.
\end{eqnarray*}
It is a $C^*$-algebra under the pointwise operations and the supremum's norm and becomes a $G$-algebra
under the $G$-action
$(g f)(h) := [g^{-1} h \in G_H ] \,\,f ( g^{-1} h)$
for $g \in G$, $h \in G_H$ and $f \in \mbox{Ind}_H^G(D)$.
}
\end{definition}

\begin{definition}    \label{defInductionHom}
{\rm
Let $H \subseteq \tilde G$ be a finite subgroupoid. Define a functor ${\cal I}_H^G: C^*_H \rightarrow C^*_G$
by ${\cal I}_H^G(A) = \mbox{Ind}_H^G(A)$ for objects $A$ in $C^*_H$ and
${\cal I}_H^G(f): \mbox{Ind}_H^G(A) \rightarrow \mbox{Ind}_H^G(B)$ by ${\cal I}_H^G(f)(x) = f(x(g))$ for morphisms $f:A \rightarrow B$ in $C^*_H$, where $x \in \mbox{Ind}_H^G(A)$
and $g \in G_H$.
}
\end{definition}

%

\begin{lemma}  \label{lemmaInductionfunctorIntertwines}
The functor ${\cal I}_H^G$ is exact, and canonically intertwines 
direct sums (i.e. $\mbox{Ind}_H^G(\bigoplus_i {A_i}) \cong \bigoplus_i \mbox{Ind}_H^G({A_i})$), tensoring with a nuclear $C^*$-algebra
$B$ endowed with the trivial $G$-action (i.e. ${\cal I}_H^G(A \otimes B) \cong {\cal I}_H^G(A) \otimes B$) and
the mapping cone (see (\ref{defmappingcone})) (i.e. $\mbox{Ind}_H^G(\mbox{cone}(f)) \cong \mbox{cone}(\mbox{Ind}_H^G(f))$).
%
\end{lemma}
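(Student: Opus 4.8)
The plan is to derive all four assertions from the single observation that $\mbox{Ind}_H^G(D)$ is a space of $D$-valued functions on $G_H$ cut out by the equivariance relation $f(gt)=t^{-1}(f(g))$ and the vanishing condition along $G_H/H$, and that ${\cal I}_H^G$ acts on morphisms by post-composition, ${\cal I}_H^G(f)(x)=f\circ x$. Consequently every operation performed \emph{fibrewise} on the coefficient algebra $D$ --- exact sequences, $c_0$-direct sums, tensoring with a fixed nuclear algebra, and the mapping cone --- commutes with the formation of this function space. First I would treat exactness. Given a short exact sequence $0\to A\xrightarrow{i}B\xrightarrow{q}C\to 0$ of $H$-algebras, injectivity of ${\cal I}_H^G(i)$ and exactness in the middle are immediate: $i\circ x$ vanishes iff $x$ does, and a section $x$ with $q\circ x=0$ takes values in $i(A)$, whence $i^{-1}\circ x$ is again equivariant and vanishing because $i$ is an $H$-equivariant isometry onto its image. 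The only substantial point is surjectivity of ${\cal I}_H^G(q)$. Here I would choose representatives $\{g_j\}$ for the orbit space $G_H/H$, lift each value $z(g_j)$ of a given $z\in\mbox{Ind}_H^G(C)$ to $b_j$ in the corner $\alpha_{g_j^*g_j}(B)$ with $q(b_j)=z(g_j)$ and $\|b_j\|\le\|z(g_j)\|+2^{-j}$ (the quotient norm being an infimum), and then set $x(g_j t):=t^{-1}(b_j)$ for $t\in H$ with $g_jt\in G_H$. This is consistent because $H$ acts on $G_H$ with idempotent stabilizers: if $g_jt=g_j$ then $t$ is an idempotent of $H$, and the relation $x(g_j)=t^{-1}(x(g_j))$ only records that $x(g_j)$ lies in the corner already selected. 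The map $x$ is equivariant and satisfies $q\circ x=z$ by $H$-equivariance of $q$, and the norm estimate forces vanishing along $G_H/H$, so $x\in\mbox{Ind}_H^G(B)$. I expect this equivariant, norm-controlled lifting to be the main obstacle, the rest being bookkeeping.

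For the direct sum I would decompose $f\in\mbox{Ind}_H^G(\bigoplus_i A_i)$ into its components $f_i=p_i\circ f$. Since the $H$-action on $\bigoplus_i A_i$ is componentwise, each $f_i$ lies in $\mbox{Ind}_H^G(A_i)$, and $f\mapsto(f_i)_i$ is a $*$-homomorphism intertwining the $G$-actions, which act only on the $G_H$-variable. To see that it lands in, and exhausts, the $c_0$-direct sum $\bigoplus_i\mbox{Ind}_H^G(A_i)$, I would use a short uniformity argument: given $\varepsilon>0$, the vanishing of $\|f(\cdot)\|$ at infinity confines the indices $g$ with $\|f(g)\|\ge\varepsilon$ to a finite subset $F$ of $G_H/H$, and on the finite set $F$ one has $\|f_i(g)\|\to 0$ as $i\to\infty$; hence $\|f_i\|_\infty<\varepsilon$ for all large $i$. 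The tensor identity ${\cal I}_H^G(A\otimes B)\cong{\cal I}_H^G(A)\otimes B$ for nuclear $B$ with trivial $G$-action is the standard identification $C_0(Y,A\otimes B)\cong C_0(Y,A)\otimes B$ restricted to the equivariant subspace: since $H$ acts trivially on $B$, the equivariance relation only involves the $A$-factor, and nuclearity of $B$ renders the completed tensor product unambiguous, so elementary tensors $f\otimes b$ with $f\in\mbox{Ind}_H^G(A)$ span a dense copy of $\mbox{Ind}_H^G(A\otimes B)$.

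Finally, the mapping-cone assertion follows formally. The cone of $f$ is the pullback of $f\colon A\to B$ along the evaluation $C_0((0,1],B)\to B$, equivalently the extension $0\to C_0((0,1))\otimes B\to\mbox{cone}(f)\to A\to 0$. Applying exactness of ${\cal I}_H^G$ and the tensor identity to this extension (the coefficient $C_0((0,1])$ being nuclear with trivial $G$-action) I obtain an extension with middle term $\mbox{Ind}_H^G(\mbox{cone}(f))$ that is naturally identified with the defining extension of $\mbox{cone}({\cal I}_H^G(f))$; a diagram chase then yields the asserted isomorphism. Thus, modulo the equivariant lifting in the exactness step, all four properties reduce to the fibrewise nature of the induction construction.
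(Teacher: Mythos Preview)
Your proof is correct and supplies in full the details that the paper omits entirely: the paper's proof reads, verbatim, ``The proof is straightforward.'' The one genuinely nontrivial point---surjectivity of ${\cal I}_H^G(q)$ via an equivariant, norm-controlled lift along orbit representatives, using that $g_j t=g_j$ forces $t\in H^{(0)}$ (which holds because $g_j^*g_j\in H^{(0)}$ and the units of the finite groupoid $H\subseteq\tilde G$ are mutually orthogonal)---is exactly what the paper leaves implicit, and you handle it correctly.
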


\begin{proof}
The proof is straightforward.
\end{proof}

%

Define $C_0(G_H/H,B)$ to be the $G$-invariant ideal of $C_0(G_H/H) \otimes B$ which is the closure of the linear span of
all elements of the form $g \otimes g g^*(b)$ ($g \in G_H, b \in B$).
Similarly, denote by $p \in Z (\call(\mbox{Ind}_H^G(A) \otimes B))$ (center) the central projection $p(g \otimes a \otimes b):= g \otimes a \otimes
g g^*(b)$ for $g \in G_H, a \in g^*g(A)$ and $b \in B$.
We have a direct sum decomposition
\begin{eqnarray}   \label{decomposIndHGtensor}
\mbox{Ind}_H^G(A) \otimes B &\cong& p \big(\mbox{Ind}_H^G(A) \otimes B \big)
\oplus (1-p)\big(\mbox{Ind}_H^G(A) \otimes B \big),
\end{eqnarray}
and we denote the first summand (and ideal) by $\mbox{Ind}_H^G(A) \stackrel{\rightarrow}{\otimes} B$.
%
%

\begin{lemma}[Cf. line (17) in \cite{meyernest}]    \label{lemmaResIndandC}
Let $B$ be a $G$-algebra and $H \subseteq \tilde G$ a finite subgroupoid.
Then there is a $G$-equivariant $*$-isomorphism 
$$\Theta:\mbox{Ind}_H^G \mbox{Res}_G^H (B)  \longrightarrow
C_0(G_H/H, B), \quad
\Theta(f)  \,\, = \,\, \sum_{ g \in G_H/H}  g \otimes g(f(g))$$
for all $f \in \mbox{Ind}_H^G \mbox{Res}_G^H (B) \subseteq C_0(G_H) \otimes B$.
(The sum is understood that we choose for every equivalence class in $G_H/H$
exactly one arbitrary representative $g \in G_H$.)
\end{lemma}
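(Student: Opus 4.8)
The plan is to read $\Theta$ as the pushforward of $f$ to the \emph{twisted} function $\phi(g):=\alpha_g(f(g))$ on the quotient $G_H/H$, where $\alpha$ denotes the $\tilde G$-action on $B$ (so $g(b)=\alpha_g(b)$ and $\delta_g$ is written $g$ as in the statement). First I would record two preliminary facts about the induced algebra of Definition \ref{defInductionAlg}. For $g\in G_H$ one has $g^*g\in H^{(0)}$ (immediate from the description of $G_H$); taking the idempotent $t=g^*g\in H^{(0)}\subseteq H$ in the defining relation $f(gt)=t^{-1}(f(g))$ (legitimate since $g(g^*g)=g$, so $gt=g\in G_H$) yields $f(g)=\alpha_{g^*g}(f(g))$, i.e.\ each value $f(g)$ is supported on the source projection $g^*g$. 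Consequently $\phi(g)=\alpha_g(f(g))\in\alpha_{gg^*}(B)=gg^*(B)$, which is exactly the fibre occurring in the generators $g\otimes gg^*(b)$ of $C_0(G_H/H,B)$. Moreover $\alpha_g$ restricts to a $*$-isomorphism $\alpha_{g^*g}(B)\to\alpha_{gg^*}(B)$ with inverse $\alpha_{g^*}$, hence is isometric there; this will give both the norm estimate and injectivity.

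The heart of the proof is that $\phi$ is constant on $H$-equivalence classes, so that $\Theta(f)=\sum_{[g]}g\otimes\phi(g)$ is independent of the chosen representatives. Let $t\in H$ with $gt\in G_H$. Using $f(gt)=t^{-1}(f(g))=\alpha_{t^*}(f(g))$ and $\alpha_{gt}\alpha_{t^*}=\alpha_{gtt^*}$ gives $\phi(gt)=\alpha_{gtt^*}(f(g))$. The key structural input is that the unit space $H^{(0)}$ of the groupoid $H$ consists of \emph{mutually orthogonal} idempotents of $\tilde G$: in a groupoid the product $ef$ of two units is nonzero only when $e=f$. Since $g=g(g^*g)$ and $t=(tt^*)t$, we have $gt=g\,(g^*g)(tt^*)\,t$, and as both $g^*g$ and $tt^*$ lie in $H^{(0)}$ this vanishes unless $g^*g=tt^*$. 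Hence $gt\neq 0$ forces $gtt^*=g(tt^*)=g(g^*g)=gg^*g=g$, so $\phi(gt)=\alpha_g(f(g))=\phi(g)$, as required. The sum over the discrete set $G_H/H$ then converges to an element of $C_0(G_H/H,B)$ because $\|\phi(g)\|=\|f(g)\|\to 0$ as $gH\to\infty$.

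It remains to check that $\Theta$ is an equivariant $*$-isomorphism. Linearity, multiplicativity (pointwise product on the source, $\alpha_g$ multiplicative, and $\delta_{[g]}\delta_{[h]}=[\,[g]=[h]\,]\,\delta_{[g]}$ in $C_0(G_H/H)$) and the $*$-property (from $\alpha_g$ being a $*$-homomorphism) are routine. For bijectivity I would exhibit the explicit inverse: for $\Phi=\sum_{[g]}g\otimes b_{[g]}\in C_0(G_H/H,B)$ with $b_{[g]}\in gg^*(B)$, the function $f(g'):=\alpha_{(g')^*}(b_{[g']})$ is a well-defined element of $\mbox{Ind}_H^G\mbox{Res}_G^H(B)$ — the induction relation holds since $b$ is a class function and $\alpha_{(gt)^*}=\alpha_{t^*}\alpha_{g^*}$ — and one computes $\Theta(f)=\Phi$ using $\alpha_g\alpha_{g^*}=\alpha_{gg^*}$, while $\Theta^{-1}\Theta=\mbox{id}$ follows from the support identity $\alpha_{(g')^*}\alpha_{g'}(f(g'))=f(g')$. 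A bijective $*$-homomorphism is automatically isometric, so $\Theta$ is a $*$-isomorphism. Finally, $G$-equivariance I would verify directly from the action $(kf)(h)=[k^{-1}h\in G_H]\,f(k^{-1}h)$ of Definition \ref{defInductionAlg}, the descent of the $C_0(G_H)$-action $k(\delta_g)=[kg\in G_H]\,\delta_{kg}$ to the quotient, and $\alpha_k\alpha_g=\alpha_{kg}$; it is cleanest to check instead that the inverse $f(g')=\alpha_{(g')^*}(b_{[g']})$ intertwines the two actions.

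The main obstacle I anticipate is twofold and concentrated in the bookkeeping. Conceptually, the only nontrivial point is the well-definedness on classes, and this is precisely where the groupoid hypothesis on $H$ enters: orthogonality of units forces $gt\neq 0\Rightarrow gtt^*=g$, without which the twisted value $\phi(g)$ need not be a class invariant. Computationally, the equivariance step is the most laborious, because the $G$-action on $G_H/H$ is only a partial transformation, so the indicators $[kg\in G_H]$ produce what formally look like sums over several classes $[g]$ mapping to a given $[h]$; the support condition $f(g)=\alpha_{g^*g}(f(g))$ (equivalently $b_{[g]}\in gg^*(B)$) is what collapses these to the single matching term, and tracking this carefully is the delicate part.
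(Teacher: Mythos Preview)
Your proposal is correct and is exactly the natural verification; the paper's own proof consists of the single line ``The proof is straightforward.'' Your argument supplies the details the paper omits, in particular the well-definedness step hinging on the orthogonality of the units $H^{(0)}$ in $\tilde G$ (which is implicit in the convention that a subgroupoid $H\subseteq\tilde G$ is really the subinverse semigroup $H\cup\{0\}$).
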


\begin{proof}
The proof is straightforward.
\end{proof}

%

\begin{lemma}[Cf. line (16) in \cite{meyernest}]    \label{lemmaResIndandC2}
Let $H \subseteq \tilde G$ be a finite subgroupoid, $A$ a $H$-algebra
and $B$ a $G$-algebra.
Then there is a $G$-equivariant $*$-isomorphism 
$$\Theta:\mbox{Ind}_H^G \big(A \otimes^{X_H} \mbox{Res}_G^H (B) \big)  \longrightarrow
\mbox{Ind}_H^G(A) \stackrel{\rightarrow}{\otimes} B,
\quad
\Theta( g \otimes a \otimes b ) =  g \otimes a \otimes g(b)$$
for all $g \in G_H, a \in g^*g(A)$ and $b \in g^*g(B)$.
\end{lemma}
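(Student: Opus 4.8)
The plan is to construct $\Theta$ on the dense $*$-subalgebra spanned by the generators $g \otimes a \otimes b$ (the covariant delta-functions of Definition \ref{defInductionAlg} with fibre value $a \otimes b$, subject to $a \in g^*g(A)$ and $b \in g^*g(B)$), to write down an explicit candidate inverse $\Psi(g \otimes a \otimes b) := g \otimes a \otimes g^*(b)$, and then to check that $\Theta$ and $\Psi$ respect all defining relations, are $*$-homomorphisms, are $G$-equivariant, and are mutually inverse; being a $*$-homomorphism, $\Theta$ is automatically contractive and extends to the $C^*$-completion with $\Psi$ as its inverse.

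Before that I would record the two families of relations presenting the source $\mbox{Ind}_H^G(A \otimes^{X_H} \mbox{Res}_G^H B)$: the $X_H$-balancing $g \otimes e(a) \otimes b = g \otimes a \otimes e(b)$ for $e \in E(H)$, inherited fibrewise from $A \otimes^{X_H} \mbox{Res}_G^H B$, and the covariance $gt \otimes a \otimes b = g \otimes t(a) \otimes t(b)$ for $t \in H$ coming from $f(gt) = t^{-1}(f(g))$ (the support constraint $f(g) = (g^*g)(f(g))$ being the special case $t = g^*g$). That $\Theta$ lands in the range of the projection $p$ of (\ref{decomposIndHGtensor}), hence in $\mbox{Ind}_H^G(A) \stackrel{\rightarrow}{\otimes} B$, is immediate from the inverse-semigroup identity $gg^*g = g$: indeed $p(g \otimes a \otimes g(b)) = g \otimes a \otimes gg^*g(b) = g \otimes a \otimes g(b)$.

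The main obstacle is the well-definedness of $\Theta$ over the balancing relation, since the target tensor product is \emph{not} balanced over $X_H$. I would resolve it by first identifying $\mbox{Ind}_H^G(A) \stackrel{\rightarrow}{\otimes} B = p(\mbox{Ind}_H^G(A) \otimes B)$ with the $X_G$-balanced tensor product $\mbox{Ind}_H^G(A) \otimes^{X_G} B$: the generator $g \otimes a$ is fixed by the central idempotent $\alpha_{gg^*}$ of $\mbox{Ind}_H^G(A)$, so on the balanced tensor product $p$ acts as the identity and every $e' \in E(G)$ crosses the tensor sign, $e'(x) \otimes b = x \otimes e'(b)$. Noting that $geg^* \in E(G)$ for $e \in E(H)$ and that $g \otimes e(a) = \alpha_{geg^*}^{\mbox{Ind}}(g \otimes a)$, I can then move $geg^*$ onto the $B$-factor and use $g^*g(b) = b$ to obtain $g \otimes e(a) \otimes g(b) = g \otimes a \otimes geg^*g(b) = g \otimes a \otimes ge(b) = \Theta(g \otimes a \otimes e(b))$, which is exactly the required compatibility; the symmetric computation handles $\Psi$.

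The remaining verifications are routine. Well-definedness over the covariance relation reduces, after cancelling the common $B$-factor $gt(b)$, to the covariance $gt \otimes a = g \otimes t(a)$ already holding in $\mbox{Ind}_H^G(A)$; multiplicativity and $*$-preservation follow because each $\alpha_g$ is a $*$-homomorphism of $B$; and $G$-equivariance follows because the $G$-action merely translates the base point $g \mapsto g_0 g$ on both sides, with $\alpha_{g_0}\alpha_g = \alpha_{g_0 g}$. Finally $\Psi \Theta = \mbox{id}$ and $\Theta \Psi = \mbox{id}$ on generators, using $g^*g(b) = b$ for the source and $gg^*(b) = b$ in the range of $p$, so $\Theta$ is the desired $G$-equivariant $*$-isomorphism.
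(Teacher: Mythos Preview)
Your overall strategy (define $\Theta$ and an explicit inverse $\Psi$ on generators, check the relations) is sound and a bit more explicit than the paper, which simply remarks that every generator can be brought into the normal form $a\in g^*g(A)$, $b\in g^*g(B)$ via the covariance relation with $t=g^*g$, and then observes that $\Theta$ is isometric because $b\mapsto g(b)$ is a $*$-isomorphism $g^*g(B)\to gg^*(B)$ fibrewise over $G_H/H$.

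There is, however, a genuine error in your treatment of what you call the ``main obstacle''. The identification $\mbox{Ind}_H^G(A)\stackrel{\rightarrow}{\otimes}B \cong \mbox{Ind}_H^G(A)\otimes^{X_G}B$ is false in general. Take $G=E$ a finite lattice of idempotents, $H=\{1\}$; then $G_H=\{1\}$, $\mbox{Ind}_H^G(A)=A$ with every $e<1$ acting as zero, and $p=\mathrm{id}$, so $\mbox{Ind}_H^G(A)\stackrel{\rightarrow}{\otimes}B=A\otimes B$. But in $A\otimes^{X_G}B$ the relation $0=e(a)\otimes b=a\otimes e(b)$ kills $a\otimes e(b)$ for every $e<1$, so the balanced tensor product is the proper quotient $A\otimes q(B)$ with $q=\prod_{e<1}(1-e)$. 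Thus the rule ``$e'(x)\otimes b=x\otimes e'(b)$ in the target'' that you invoke is not available.

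Fortunately this error sits in a step you do not need. Because $H\subseteq\tilde G$ is a \emph{groupoid}, $E(H)=H^{(0)}$ consists of mutually orthogonal projections in $\tilde G$, and your generators already satisfy $a\in g^*g(A)$, $b\in g^*g(B)$. Hence for $e\in H^{(0)}$ either $e=g^*g$, in which case $e(a)=a$ and $e(b)=b$, or $e\perp g^*g$, in which case $e(a)=e\cdot g^*g(a)=0$ and $e(b)=0$. Either way $\Theta(g\otimes e(a)\otimes b)=\Theta(g\otimes a\otimes e(b))$ trivially, and likewise for $\Psi$. So the balancing check is immediate once you use that $H^{(0)}$ is orthogonal; drop the $X_G$-balanced identification and your argument goes through.
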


\begin{proof}
The tensor product $A \otimes^{X_H} \mbox{Res}_G^H (B)$ denotes the balanced groupoid tensor product
and is endowed with
the diagonal $H$-action.
In other words, we may regard $A$ and $\mbox{Res}_G^H (B)$ as
$H \cup \{0\}$-inverse semigroup algebras and take the usual diagonal inverse semigroup action
for the tensor product $A \otimes^{X_{H \cup \{0\}}} \mbox{Res}_{\tilde G}^{H \cup \{0\}} (B)$.
%

Note that we have $g t\otimes t^*(a \otimes b) = gt \otimes t^*(a) \otimes t^*(b)$
in $\mbox{Ind}_H^G \big(A \otimes^{X_H} \mbox{Res}_G^H (B) \big)
\subseteq C_0(G_H) \otimes A \otimes B$ for all $g \in G_H, t \in H, a \in A$ and $b \in B$
with $gt \in G_H$, so we can achieve the required format in the argument of $\Theta$ when setting $t:= g^*g$.
Surjectivity of $\Theta$ is obvious.
That $\Theta$ is isometric is also clear as the transition $g^*g B \rightarrow gB$ by $\Theta$ is a $*$-isomorphism.
\end{proof}

%
%


From now on we restrict ourselves to trivially graded $G$-algebras.
The next lemma must be viewed as conjecture as we were not be able to completely verify the
stability property.

\begin{lemma}[Conjecture]   \label{lemmaFunctorInduction}
The functor $F= {C_G} \circ {\cal I}_H^G$ from the category $C^*_H$ to the additive category $KK^G$
%
is a stable, split exact and homotopy invariant functor.
(Stability means that $F(f:A \rightarrow A \otimes \calk)$ is an isomorphism for every corner embedding $f$,
where $A \otimes \calk$ is allowed to be equipped with any $H$-action.)
\end{lemma}

\begin{proof} 
By Higson \cite[Section 4.4]{higson},
we need to show that the functor $L: {C^*_H} \rightarrow {Ab}$ determined
by $L(B) = KK^G(A, {\cal I}_H^G(B))$ for objects $B$ and $L(f)= \cali_H^G(f)_*:KK^H(A,
{\cal I}_H^G(B_1)) \rightarrow KK^H(A,{\cal I}_H^G(B_2))$ for morphisms $f:B_1 \rightarrow B_2$
is a stable, split exact and homotopy invariant functor for all objects $A$ in $KK^G$ in the sense of \cite{burgiUniversalKK}.
This follows from Lemma \ref{lemmaInductionfunctorIntertwines}
and \cite[Proposition 1.1]{burgiUniversalKK}, which says that
the functor $B \mapsto KK^G(A,B)$ is stable, split exact and homotopy invariant.
Note that $\calk$, being simple, allows only $G$-actions by automorphisms
(since $g g^* \calk$ is an ideal in $\calk$).
Some gap is here that we require the $G$-action on $B \otimes \calk$ to be diagonal
and by Lemma \ref{lemmaInductionfunctorIntertwines} we can allow the action on $\calk$ only be trivial,
however, in \cite{burgiUniversalKK} it could be anyone. 
%
%
\end{proof}

Because $F$ is 
stable, split exact and homotopy invariant, it factors through $KK^H$ by \cite[Theorem 1.3]{burgiUniversalKK}
and this gives us a new functor defined next.
We remark that \cite[Theorem 1.3]{burgiUniversalKK} works also for countable discrete groupoids $H$, as pointed out in \cite{burgiUniversalKK},
by regarding $H \cup \{0\}$ as an inverse semigroup with zero element.

\begin{definition}   \label{defInduction}
{\rm
Let $H \subseteq \tilde G$ be a finite subgroupoid. We define the {\em induction functor}
$\mbox{Ind}_H^G : KK^H \rightarrow KK^G$ as the unique functor
satisfying
${C_G} \circ {\cal I}_H^G = \mbox{Ind}_H^G \circ { C_H}$, see \cite[Theorem 1.3]{burgiUniversalKK}
and Lemma \ref{lemmaFunctorInduction}.
}
\end{definition}

%
%
If $H' \subseteq G$ is a finite subinverse semigroup then we consider its associated finite subgroupoid
$H \subseteq \tilde G$ and define induction by $\mbox{Ind}_{H'}^G:=\mbox{Ind}_{H}^G$;
usually we regard it, however, as a functor $\mbox{Ind}_{H'}^G: KK^{H'} \rightarrow KK^G$.

Even we have not checked this, it seems likely that the induction functor is given on the level of cycles
analogously 
as the induction functor for groups \cite{kasparov}.
That is, we conjecture that
$$\mbox{Ind}_H^G(\cale,T) = (\mbox{Ind}_H^G(\cale), \mbox{Ind}_H^G(T)),$$ 
where for the $G$-Hilbert $A,B$-bimodule $\cale$ a dense subspace of $\mbox{Ind}_H^G(\cale)$ is verbatim defined like in Definition \ref{defInductionAlg}
(consiting only of functions on $G_H$ with finite carrier),
with pointwise (over the points $G_H$) module operations and inner product and $G$-action 
verbatim as in Definition \ref{defInductionAlg}. 
Its closure should turn 
it into a $G$-Hilbert $\mbox{Ind}_H^G(A),\mbox{Ind}_H^G(B)$-bimodule $\mbox{Ind}_H^G(\cale)$,
and
the operator $\mbox{Ind}_H^G(T)$ should be given by  
$$\mbox{Ind}_H^G(T) \xi (g) = \sum_{h \in H, \, g h \in G_H} h (T(\xi(gh))) \quad (\xi \in \mbox{Ind}_H^G(\cale),\,g \in G_H).$$
We shall not use this in this paper.


\begin{definition}
{\rm
Let $H \subseteq G$ be a subinverse semigroup or $H \subseteq \tilde G$ a finite subgroupoid.
The {\em restriction functor} $\mbox{Res}_G^H: KK^G \rightarrow KK^H$
is defined by restricting $G$-actions (or $\tilde G$-action for the groupoid
$H$) to $H$-actions in $G$-algebras and $G$-Hilbert modules of cycles.
%
Additionally, every restricted $H$-algebra is cut-down to the form $\mbox{Res}_G^H(A) = 1_H(A)$ in case that $H$ is a groupoid
($1_H := \sum_{x \in H^{(0)}} x$) or $H$ should not contain the identity of $G$.
}
\end{definition}

\begin{remark}    \label{remarkAdjointfunctorIndRes}
{\em
Identity (\ref{indresadjointidentity}) is wrong in $KK^G$. Take for example a finite, unital inverse
semigroup $G$ where no other projection than $1$ is connected with $1$. Set $H=\{1\}$, and $A=B=\C$
endowed with the trivial $G$-action.
Then $KK^G(\mbox{Ind}_H^G\, \C,\C)=0$, because a cycle $(\cale,T)$ satisfies
$a \xi 1(b) = a \xi p(b) = p(a) \xi b = 0$ for all $a \in \mbox{Ind}_H^G(\C), b \in \C, \xi \in \cale$ and
any projection
$p<1$ in $E$. But $KK^H(\C,\mbox{Res}_G^H \,\C) = \Z$.


Identity (\ref{indresadjointidentity}) is also wrong in $IK^G$.
Let $G=E$ be finite and consist only of projections. Set $H=\{e\}$, where $e$ denotes the minimal
projection of $E$. Then $\mbox{Ind}_H^E\, \C \cong \C$ and thus
$IK^E(\mbox{Ind}_H^E\, \C,\C) \cong K(\C \rtimes E) \cong \Z^m$
by the Green--Julg isomorphism in \cite{burgiGreenjulg}.
But $IK^H(\C , \mbox{Res}_G^H\, \C) \cong KK(\C,\C) \cong \Z$.

}
\end{remark}

\section{Realizing morphisms in $KK^G$ by $*$-homomorphisms}

\label{sectionCuntzpicture}

Generalizing the Cuntz picture of $KK$-theory, \cite{cuntz1987}, to equivariant $KK$-theory,
Meyer showed in \cite[Theorem 6.5]{meyer} that for every locally compact second countable group $G$
and for every morphism $x \in KK^G(A,B)$ there exist $G$-algebras $A'$ and $B'$,
isomorphisms $y \in KK^G(A,A')$ and $z \in KK^G(B,B')$, and a $*$-homomorphism
$f: A' \rightarrow B'$ (also interpreted as an morphism in $KK^G$)
such that $x = z \circ f \circ y^{-1}$. That is, we may rewrite morphisms in $KK^G$ as $*$-homomorphisms.
We will adapt Meyer's proof to the case of an inverse semigroup $G$ (see Theorem \ref{adaptionth65}).
To this end, we need a model for an $\ell^2(G)$-space, since it plays a central role in Meyer's work \cite{meyer}.
However, a direct translation from a group $G$ to an inverse semigroup $G$ does not work, even not if taking
the $\ell^2(G)$ from Khoshkam and Skandalis \cite{khoshkamskandalis2004}, since it is a useful
{\em incompatible} $\C$-module, however, we need a {\em compatible} model
for $\ell^2(G)$, that is, a compatible $G$-Hilbert $C_0(X)$-modul.
%
This is necessary as to achieve that the action $g g^{-1}$ ($g \in G$) is in the center of $\call(\cale)$
in all derived spaces $\cale$ from $\ell^2(G)$ 
and consequently the $G$-action on
$\call(\cale)$ is 
multiplicative and so a $G$-action.
Hence constructions like $q_s A:= q(\K(G \N) A)$ in \cite{meyer}
or Definitions \ref{defintionKGA} and \ref{defintionKGNA}
become indeed $G$-algebras as required.

%
%
%

In the next few paragraphs (until Definition \ref{defL2hilbertmodule}) we shall identify elements $e \in E$ with its characteristic function $1_e$ in $C_0(X)$.
Write $\mbox{Alg}^*(E)$ for the dense $*$-subalgebra of $C_0(X)$ generated by the characteristic functions
$1_e$ for all $e \in E$.
Moreover,
write $\bigvee_i f_i \in \C^X$ for the pointwise supremum of a family of functions $f_i: X \rightarrow \C$.
We shall use the order relation on $G$ defined by $g \le h$ iff $g = eh$ for some $e \in E$.

\begin{definition}
{\rm
An inverse semigroup $G$ is called {\em $E$-continuous} if
the function $\bigvee \{e \in E |\, e \le g\} \in \C^X$ 
is a {\em continuous} function in $C_0(X)$
for all $g \in G$.
}
\end{definition}

\begin{lemma}   \label{lemmaEcontinuity}
An inverse semigroup $G$ is {$E$-continuous} if and only if for every $g \in G$ there exists a finite
subset $F \subseteq E$ such that
$\bigvee \{e \in E |\, e \le g\} =\bigvee \{e \in F |\, e \le g\}$.
\end{lemma}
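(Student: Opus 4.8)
The plan is to reduce the analytic $E$-continuity condition to a topological statement about the compact, totally disconnected space $X$ with $C_0(X) = C(X) = C^*(E)$. Identifying each $e \in E$ with the clopen set $U_e := \{x \in X \mid 1_e(x) = 1\}$, I would first observe that, since every $1_e$ is $\{0,1\}$-valued, the pointwise supremum $\bigvee\{e \in E \mid e \le g\} \in \C^X$ is exactly the indicator function $1_U$ of the open set $U := \bigcup_{e \le g} U_e$. Hence $E$-continuity at $g$ is equivalent to $1_U \in C(X)$, i.e. to $U$ being clopen.

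For the easy implication I would start from a finite $F = \{e_1, \dots, e_n\}$ realizing the supremum: then $\bigvee\{e \in F \mid e \le g\}$ is a finite join of projections in the commutative $C^*$-algebra $C(X)$, hence itself a projection and in particular continuous, so $g$ satisfies the $E$-continuity requirement. For the converse I would assume $1_U$ continuous, so that $U$ is clopen and therefore compact (being a closed subset of the compact space $X$); the family $\{U_e \mid e \le g\}$ is then an open cover of $U$, and extracting a finite subcover $U = U_{e_1} \cup \dots \cup U_{e_n}$ with each $e_i \le g$ yields the finite set $F := \{e_1, \dots, e_n\}$ satisfying $\bigvee\{e \in E \mid e \le g\} = 1_U = \bigvee\{e \in F \mid e \le g\}$.

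The only real content lies in the converse, where compactness of $X$ converts the topological finiteness (a finite subcover) into the desired finite subset $F$. The main point to treat carefully is the identification of the order-theoretic supremum in $\C^X$ with the indicator of the union $U$, together with the standard fact that clopen subsets of the compact space $X$ are compact; once these are in place the finite subcover argument finishes the proof, and both implications are otherwise routine.
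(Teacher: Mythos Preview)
Your proposal is correct and follows essentially the same approach as the paper: identify the pointwise supremum with the indicator of the open set $U = \bigcup_{e \le g} U_e$, observe that continuity forces $U$ to be clopen and hence compact in $X$, and extract a finite subcover to produce $F$. The paper's proof is terser and only spells out the nontrivial direction, but the argument is the same.
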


\begin{proof}
If $\bigvee \{e \in E |\, e \le g\} = 1_K \in C_0(X)$ for a clopen subset $K \subseteq X$ then $K$ must be compact. Hence $K = \bigcup \{\,\mbox{carrier}(1_e) \subseteq X\,| \, e \in E ,\, e \le g\}$ allows a finite subcovering,
where carrier denotes the usual carrier of a function on a locally compact space.
\end{proof}

\begin{definition}[Compatible $L^2(G)$-space]     \label{defCompatibleL2}
{\rm
Let $G$ be an $E$-continuous inverse semigroup.
Write $c$ for the linear span of all functions $\varphi_g: G \rightarrow \C$ (in the linear space $\C^G$) defined by
\begin{eqnarray*}
\varphi_g(t) & :=& [t \le g] 
\end{eqnarray*}
for all $g,t \in G$.
Endow $c$ with the $G$-action $g(\varphi_h) := \varphi_{g h}$ for all $g,h \in G$.
Turn $c$ to an $\mbox{Alg}^*(E)$-module by setting $\xi e := e(\xi)$ for all $\xi \in c$ and $e \in E$.
Define an $\mbox{Alg}^*(E)$-valued inner product on $c$ by
\begin{eqnarray}  \label{identinnerprodphi}
\langle \varphi_g, \varphi_h \rangle &:=& \bigvee \{e \in E \,|\, eg = e h ,
\,e \le g g^{-1} h h^{-1} \}.
\end{eqnarray}
The norm completion of $c$ is a 
$G$-Hilbert $C_0(X)$-module denoted by $\widehat{\ell^2}(G)$.
}
\end{definition}

We discuss the last definition.
At first notice that
$\langle \varphi_g , \varphi_h\rangle = g g^{-1} \bigvee \{e \in E|\,e = e h g^{-1}\}$
(observe that $e = e h g^{-1}$ implies $e \le h g^{-1} g h^{-1}$),
so that by $E$-continuity $\langle \varphi_g , \varphi_h\rangle$ is in $C_0(X)$ and actually even in $\mbox{Alg}^*(E)$ by
Lemma \ref{lemmaEcontinuity}, and $e \in E$ in (\ref{identinnerprodphi}) can be replaced by $e \in F$
for some finite subset $F \subseteq E$.
The identities
$\langle \varphi_g , \varphi_h \rangle = \langle \varphi_h , \varphi_g \rangle$, $\langle \varphi_g , \varphi_h f \rangle
= \langle \varphi_g f , \varphi_h \rangle
= \langle \varphi_g , \varphi_h \rangle f$,
$j(\langle \varphi_g , \varphi_h \rangle) = \langle j(\varphi_g) , j(\varphi_h) \rangle$
for all $g,h,j \in G$ and $f \in E$ are easy to check.
We note that (\ref{identinnerprodphi}) is positive definite. Indeed, assume
$\langle x,x\rangle = 0$ for $x=\sum_{i=1}^n \lambda_i \varphi_{g_i}$
with nonzero $\lambda_i \in \C$ and $g_i \in G$ mutually different.
Choose $g_j$ such that no other $g_i$ satisfies $g_j g_j^{-1} < g_i g_i^{-1}$.
Hence, $\langle \varphi_{g_j},\varphi_{g_j}\rangle = g_j g_j^{-1}$ but 
$\langle \varphi_{g_i},\varphi_{g_k}\rangle \neq g_j g_j^{-1}$ for all combinations where $i \neq k$.
By linear independence of the projections $E$ in $\mbox{Alg}^*(E)$ $\lambda_j$ must be zero;
contradiction.
The last proof also shows the following lemma.

\begin{lemma}
The vectors $(\varphi_g)_{g \in G} \subseteq \widehat{\ell^2}(G)$ are linearly independent.
\end{lemma}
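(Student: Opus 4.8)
The plan is to read off linear independence directly from the positive definiteness of the inner product (\ref{identinnerprodphi}) that was just established, exploiting that in a Hilbert module the norm is recovered from the inner product via $\|y\|^2 = \|\langle y, y\rangle\|$. Thus a finite combination $y = \sum_{i=1}^n \lambda_i \varphi_{g_i}$ represents the zero vector of $\widehat{\ell^2}(G)$ if and only if $\langle y, y\rangle = 0$ in $C_0(X)$, so the two statements ``$y=0$ forces all $\lambda_i=0$'' (linear independence) and ``$\langle y,y\rangle = 0$ forces all $\lambda_i=0$'' (strict positive definiteness) literally coincide for pairwise distinct $g_i$.

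Concretely, I would argue by contradiction: assume $y = \sum_{i=1}^n \lambda_i \varphi_{g_i} = 0$ with the $g_i$ pairwise distinct and not all $\lambda_i$ zero, and discard the indices carrying a vanishing coefficient so that every remaining $\lambda_i \neq 0$. Then $\langle y, y\rangle = \sum_{i,k} \overline{\lambda_i}\lambda_k \langle\varphi_{g_i},\varphi_{g_k}\rangle = 0$, where, by the remarks following Definition \ref{defCompatibleL2}, each coefficient $\langle\varphi_{g_i},\varphi_{g_k}\rangle$ is a projection lying in $\mathrm{Alg}^*(E)$. I would then reuse verbatim the selection and separation step from the preceding positive-definiteness computation: choose $g_j$ with $g_jg_j^{-1}$ maximal among the $g_ig_i^{-1}$ (no $g_ig_i^{-1}$ strictly above it), so that $\langle\varphi_{g_j},\varphi_{g_j}\rangle = g_jg_j^{-1}$ while no off-diagonal projection $\langle\varphi_{g_i},\varphi_{g_k}\rangle$ with $i\neq k$ equals $g_jg_j^{-1}$. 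Expanding $\langle y,y\rangle$ over the projections of $\mathrm{Alg}^*(E)$ and invoking their linear independence isolates the contribution $|\lambda_j|^2\, g_jg_j^{-1}$, forcing $\lambda_j = 0$ and contradicting $\lambda_j \neq 0$. Hence all coefficients vanish.

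The only substantive point — and therefore the main obstacle — is the separation step: that the maximal range projection $g_jg_j^{-1}$ cannot be reproduced by any cross term $\langle\varphi_{g_i},\varphi_{g_k}\rangle$ ($i\neq k$), so that after writing $\langle y,y\rangle$ in the linearly independent projection ``basis'' of $\mathrm{Alg}^*(E)$ the coefficient of $g_jg_j^{-1}$ is genuinely $|\lambda_j|^2$. This is exactly where the defining formula (\ref{identinnerprodphi}), $E$-continuity, and the maximality of $g_jg_j^{-1}$ are needed, and it is precisely the content of the positive-definiteness argument given just before the lemma. Once that computation is granted, the linear-independence statement follows with no further work, which is why it is a corollary of the same proof.
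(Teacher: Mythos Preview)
Your proposal is correct and takes essentially the same approach as the paper: the paper's entire proof is the sentence ``The last proof also shows the following lemma,'' i.e., the positive-definiteness argument just before the statement is reused verbatim, which is precisely what you spell out (vanishing of $y$ forces $\langle y,y\rangle=0$, then the maximal-range-projection selection and linear independence of $E$ in $\mathrm{Alg}^*(E)$ yield the contradiction).
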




\begin{definition}  \label{defL2hilbertmodule}
{\rm
Let $\cale$ be a $G$-Hilbert $B$-module. Then
$\widehat{\ell^2}(G,\cale) := \widehat{\ell^2}(G) \otimes^X \cale$ is a $G$-Hilbert $B$-module, where 
$\otimes^X$ denotes the $C_0(X)$-balanced exterior tensor product as defined by Le Gall \cite[Definition 4.2]{legall1999}
(or in this case equivalently, the internal tensor product $\otimes_{C_0(X)}$).
}
\end{definition}

Everywhere in \cite{meyer} we have to replace
$L^2(G)$ (see \cite[Section 2]{meyer}) by $\widehat{\ell^2}(G)$ and $L^2(G,\cale)$ (see \cite[Section 2.1.1]{meyer}) by $\widehat{\ell^2}(G,\cale)$. 
These definitions have to go further.

\begin{definition}   \label{defHilbertCXmodules}
{\rm
Every separable $G$-Hilbert space $\calh$ in Meyer \cite{meyer}
has to be replaced by a countably generated $G$-Hilbert $C_0(X)$-module $\calh$.
Every occurrence of the Hilbert space $\C$ in \cite{meyer} has to be substituted by the $G$-Hilbert $C_0(X)$-module $C_0(X)$.
For every $G$-Hilbert $B$-module or $G$-algebra $\cale$, $\ell^2(\calh) \otimes \cale$ in \cite{meyer}
has to be replaced by the compatible tensor product
$\ell^2(\calh) \otimes^X \cale$, and likewise $\K(\calh) \otimes \cale$ in \cite{meyer} by
$\K(\calh) \otimes^X \cale$.
}
\end{definition}

In the beginning of Section 3 of \cite{meyer} we have the following adaption.

\begin{definition}  \label{defGequivariance}
{\rm
Let $A$ and $B$ be $\sigma$-unital $G_2$-$C^*$-algebras and let $\calh$ be
a countably generated $G_2$-Hilbert $C_0(X)$-module.
A Kasparov triple $(\cale,\phi,F)$ is called {\em $\calh$-special} iff
\begin{itemize}
\item[(i)] $F$ is a $G$-equivariant symmetry 
({\em $G$-equivariance} means that the function $F: \cale \rightarrow \cale$ commutes with the
$G$-action $U_g :\cale \rightarrow \cale$ for all $g \in G$), and

\item[(ii)] $\calh \otimes^X \cale \subseteq \hat \calh_B$.

\end{itemize}
}
\end{definition}


\begin{lemma}     \label{lemma31meyer}
Lemma 3.1 of \cite{meyer} holds true also for an inverse semigroup $G$.
\end{lemma}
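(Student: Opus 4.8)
The plan is to transcribe Meyer's proof of \cite[Lemma 3.1]{meyer} essentially line by line, applying throughout the translation dictionary fixed in Definition \ref{defHilbertCXmodules}: every separable $G$-Hilbert space is replaced by a countably generated $G$-Hilbert $C_0(X)$-module, the Hilbert space $\C$ by the module $C_0(X)$, every ordinary exterior tensor product by the $C_0(X)$-balanced tensor product $\otimes^X$, and the regular representation $L^2(G)$ by the compatible model $\widehat{\ell^2}(G)$ of Definition \ref{defCompatibleL2}. Under this dictionary the statement reduces to verifying that every Kasparov triple $(\cale,\phi,F)$ is, after adding a degenerate cycle and performing an operator homotopy, unitarily equivalent to an $\calh$-special triple in the sense of Definition \ref{defGequivariance}, that is, one in which $F$ is a $G$-equivariant symmetry and $\calh \otimes^X \cale \subseteq \hat\calh_B$.

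First I would isolate the two analytic ingredients that carry Meyer's argument in the group case. The first is an equivariant Kasparov stabilization theorem, used to absorb $\cale$ into the standard module $\hat\calh_B$; over a $C_0(X)$-base this is available in the compatible setting through Le Gall's stabilization for Hilbert modules \cite{legall1999}, applied to the balanced tensor products of Definition \ref{defL2hilbertmodule}. The second is the Fell-type absorption property of the regular representation, which is exactly what forces the use of $L^2(G)$ in \cite{meyer}: tensoring with $\widehat{\ell^2}(G)$ must turn an arbitrary compatible representation into an absorbing one. Here the standing hypotheses are essential, since $E$-continuity (Lemma \ref{lemmaEcontinuity}) guarantees that the inner products (\ref{identinnerprodphi}) lie in $\mbox{Alg}^*(E) \subseteq C_0(X)$, so that $\widehat{\ell^2}(G)$ is a genuine $G$-Hilbert $C_0(X)$-module and the derived modules $\widehat{\ell^2}(G,\cale)$ of Definition \ref{defL2hilbertmodule} are well defined.

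Next I would carry out the symmetrization of $F$. In the group case one averages $F$ against the regular representation and uses that this makes the averaged operator $G$-equivariant modulo $\calk$; in our setting the corresponding step relies on compatibility, namely that each $U_{gg^{-1}}$ is a central self-adjoint projection in $\call(\cale)$, so that the action $g(T)=U_g T U_{g^{-1}}$ is multiplicative and hence a genuine $G$-action. As stressed at the beginning of Section \ref{sectionCuntzpicture}, this is precisely the property ensuring that the operators produced along the way define $G$-algebras and that the averaged symmetry commutes with the $U_g$ in the sense demanded by Definition \ref{defGequivariance}(i). Condition (ii) is then read off from the absorption step, since by construction $\cale$ has been embedded in $\hat\calh_B$ and the latter absorbs $\calh \otimes^X \cale$.

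The step I expect to be the main obstacle is the Fell-type absorption for $\widehat{\ell^2}(G)$ in the compatible $C_0(X)$-module category. For a group this rests on a unitary intertwining the diagonal representation with the exterior product of the regular and the trivial representation on $\widehat{\ell^2}(G)\otimes^X \calh$; for an inverse semigroup the candidate intertwiner must be built from the functions $\varphi_g$, must respect the $C_0(X)$-valued inner product (\ref{identinnerprodphi}), and will in general only be a partial isometry adapted to the idempotent structure $E$ rather than an honest unitary. Verifying that this partial isometry is adjointable over $C_0(X)$, that it is $G$-equivariant in the compatible sense, and that it still yields absorption after balancing over $X$ is the delicate point; once it is in place, the remaining bookkeeping of Meyer's proof transcribes without change.
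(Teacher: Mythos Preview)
Your proposal misidentifies what actually needs to be proved here and where the work lies. Lemma 3.1 of \cite{meyer} is not the full ``every cycle is homotopic to an $\calh$-special one'' statement; it is the concrete technical step that, given an essential triple $(\cale,\phi,F)$, one can pass to $L^2(G,\cale)$ and replace $F$ by an operator $F'$ that is genuinely $G$-equivariant and is an $F$-connection for the factorisation $L^2(G,\cale)\cong L^2(G,A)\otimes_A\cale$. The paper's adaptation consists precisely of writing down the correct formula in the inverse-semigroup setting,
\[
F'(\varphi_g\otimes\xi)\;:=\;\varphi_g\otimes g(F)(\xi),
\]
on $\widehat{\ell^2}(G)\otimes^X\cale$, checking $G$-equivariance (this is where centrality of $U_{gg^{-1}}$ enters, via $h^{-1}h$ commuting through), and then verifying the $F$-connection property by computing, for $\xi=\varphi_g\otimes a$ with $gg^{-1}(a)=a$,
\[
T_\xi F - F' T_{\xi\tau}\;=\;\varphi_g\otimes K_g,\qquad K_g=[\phi(a),F]+\big(gg^{-1}(F)-g(F)\big)\phi\tau(a)\in\calk(\cale).
\]
Compactness of $K_g$ uses exactly the inverse-semigroup Kasparov condition from Definition \ref{defCycle}. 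None of this appears in your outline.

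By contrast, the ingredients you single out --- Le Gall stabilisation, and especially a Fell-type absorption for $\widehat{\ell^2}(G)$ built from a partial-isometric intertwiner --- play no role in this lemma. Fell absorption is neither stated nor needed here; you have imported the obstacle from a different part of the theory. Your ``averaging $F$ against the regular representation to make it equivariant modulo $\calk$'' is also not the construction: $F'$ is not an averaged operator on $\cale$ but a fibrewise conjugate on the enlarged module $\widehat{\ell^2}(G)\otimes^X\cale$, and it is strictly $G$-equivariant, not merely modulo compacts. As written, your plan does not engage with the one computation that carries the lemma, namely the $F$-connection estimate, and would leave the inverse-semigroup adaptation unproved.
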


\begin{proof}
Let $(\cale,\phi,F)$ be an essential Kasparov triple for $A,B$.
Rather than the definition $F':C_c(G,\cale) \rightarrow C_c(G,\cale)$ ($(F'f)(g) = g(F) (f(g))$, $g \in G,f \in C_c(G,\cale)$)
in Meyer \cite{meyer} we have to use the following one.
Define $F':\widehat{\ell^2}(G) \otimes^X \cale \rightarrow \widehat{\ell^2}(G) \otimes^X \cale$
by
\begin{eqnarray*}
F'(\varphi_g \otimes \xi) &:=& \varphi_g \otimes g(F)(\xi)
\end{eqnarray*}
for $g \in G,\xi \in \cale$. We show that $F'$ is $G$-equivariant (see Definition \ref{defGequivariance}).
For $h \in G$ we have
\begin{eqnarray*}
h\big(F'(\varphi_g \otimes \xi )\big ) &=& h \varphi_{g} \otimes h g F g^{-1} h^{-1} h (\xi) \\
&=& \varphi_{h g} \otimes hg(F) (h(\xi)) \\
&=& F'\big (h(\varphi_g \otimes \xi) \big ),
\end{eqnarray*}
because $h^{-1} h \in \call(\cale)$ is in the center.

We have to check that $F'$ is an $F$-connection (see \cite[Section 2.5]{meyer})
when writing $\widehat{\ell^2}(G,\cale) \cong \widehat{\ell^2}(G,A) \otimes_A \cale$ (because $\phi$ is essential).
Write $\tau$ for the grading automorphisms on $A$ and $\widehat{\ell^2}(G,A)$.
Let $\xi := \varphi_g \otimes a 
\in \widehat{\ell^2}(G,A)$ for $g \in G$ and $a \in A$ with $g g^{-1}(a) = a$ without loss of generality.
Set $K:= T_\xi F-F'T_{\xi \tau}: \cale \rightarrow \widehat{\ell^2}(G,\cale)$ (see \cite[Section 2.5]{meyer})
for $T_\xi(\eta) = \xi \otimes \eta$ and $\eta \in \cale$. 
Then we have
\begin{eqnarray*}
K \eta &=& \varphi_g \otimes \phi(a) F \eta - \varphi_g \otimes g(F) \phi \tau(a) \eta
\; = \; \varphi_g \otimes K_g (\eta)
\end{eqnarray*}
in the space $\widehat{\ell^2}(G) \otimes^X \cale$ for all $\eta \in \cale$, where
\begin{eqnarray*}
K_g &:=& \phi(a) g g^{-1}(F) - g(F) \phi \tau (a) \;=\; [\phi(a),F] + \big (g g^{-1}(F)- g(F) \big) \phi \tau(a),
\end{eqnarray*}
because $a = g g^{-1}(a)$ and $g g^{-1} \in \call(\cale)$ is in the center and so $\phi(a) F = \phi(a) g g^{-1}(F)$. Since $(\cale,\phi,F)$ is a Kasparov triple, $K_g \in \calk(\cale)$.
Assuming for the moment that $K_g$ was an elementary compact operator $\theta_{\alpha,\beta}$ for $\alpha,\beta \in \cale$, we would have
$K = \varphi_g \otimes \theta_{\alpha,\beta} = \theta_{\varphi_g \otimes \alpha,\beta} \in \calk(\cale,\widehat{\ell^2}(G,\cale))$
as required. This is also true for general $K_g$ by approximation.
\end{proof}

\begin{definition}   \label{defintionKGA}
Instead of $\K(G)A := \K(L^2(G)) \otimes A$ in Proposition 3.2 (and Section 2.1.1) of Meyer's paper \cite{meyer}
we have to use $\K(G)A := \K(\widehat{\ell^2}(G)) \otimes^X A$.
\end{definition}

Note $\K(G)A$ is a $G$-algebra.
We have also an isomorphism of $G$-algebras
\begin{equation}   \label{isol2a}
\psi: \K(G)A \cong \K \big(\widehat{\ell^2}(G) \big) \otimes^X \K(A) \cong \K \big(\widehat{\ell^2}(G) \otimes^X A \big)
= \K\big(\widehat{\ell^2}(G,A) \big)
\end{equation}
as used in \cite[Proposition 3.2]{meyer}. This proposition goes essentially through unchanged
but uses also this lemma by Mingo and Phillips \cite{mingophillips}.

\begin{lemma}[Cf. Lemma 2.3 of \cite{mingophillips}]
If $\cale_1$ and $\cale_2$ are $G$-Hilbert $A$-modules which are isomorphic as Hilbert $A$-modules then $\widehat{\ell^2}(G,\cale_1)$
and $\widehat{\ell^2}(G,\cale_2)$ are isomorphic as $G$-Hilbert $A$-modules.
\end{lemma}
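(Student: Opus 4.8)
The statement is the inverse-semigroup analogue of the equivariant absorption phenomenon of Mingo and Phillips, and the plan is to produce an explicit isomorphism by ``conjugating'' the given non-equivariant isomorphism with the $G$-action, in the spirit of Fell's absorption trick. Write $W\colon \cale_1 \to \cale_2$ for the given Hilbert $A$-module isomorphism; it is $A$-linear and preserves the $A$-valued inner product, but need not intertwine the $G$-actions. The one preliminary fact I would record is that $W$ is nonetheless automatically \emph{$E$-equivariant}: since $A$ is a $C_0(X)$-algebra and every projection $e \in E$ acts on a compatible module by right multiplication by the central multiplier $\pi(e) \in Z(\calm(A))$ (this is the compatibility relation $\xi\, e(b) = e(\xi)\, b$, valid for every $e \in E$ since $e = e\,e^{-1}$, cf. Definition \ref{defHilbert}), the $A$-linearity of $W$ forces $W(e(\xi)) = W(\xi)\pi(e) = e(W(\xi))$ for all $e \in E$. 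This commutation with $E$ is what compensates, in the inverse-semigroup setting, for the failure of $g^{-1}g$ to be the identity.

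First I would define, on the dense linear span of the elementary tensors $\varphi_g \otimes \xi$ inside $\widehat{\ell^2}(G,\cale_1) = \widehat{\ell^2}(G) \otimes^X \cale_1$, the map
\[
V(\varphi_g \otimes \xi) \,:=\, \varphi_g \otimes g\big( W\big( g^{-1}(\xi) \big)\big), \qquad g \in G,\ \xi \in \cale_1 .
\]
Since $g g^{-1}(\varphi_g) = \varphi_g$, in the $C_0(X)$-balanced tensor product we may and do assume $\xi = g g^{-1}(\xi)$, which pins the support of $\xi$ to the range of the central projection $g g^{-1}$ and makes the conjugation $g\, W\, g^{-1}$ meaningful despite $g$ being only a partial isometry. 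I would then verify, in turn, that $V$ is well defined on the balanced tensor product (that it respects $\varphi_{eg} \otimes \xi = \varphi_g \otimes e(\xi)$), that it is $A$-linear, that it is isometric for the inner product inherited from $\otimes^X$, and that the same formula with $W^{-1}$ in place of $W$ furnishes a two-sided inverse; each of these reduces, through the support condition $\xi = g g^{-1}(\xi)$, the $E$-equivariance of $W$, and the relations of Definitions \ref{defCstar} and \ref{defHilbert}, to a short idempotent computation. The map then extends by continuity to the required isomorphism of $G$-Hilbert $A$-modules, once $G$-equivariance is established.

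The step I expect to be the main obstacle is exactly the verification of $G$-equivariance, $V(h(\varphi_g \otimes \xi)) = h(V(\varphi_g \otimes \xi))$, because this is where the partial-isometry nature of the elements of $G$ is felt. Expanding both sides and using $h(\varphi_g) = \varphi_{hg}$ reduces the claim to the identity $hg\big(W(g^{-1}h^{-1}h(\xi))\big) = hg\big(W(g^{-1}(\xi))\big)$, in which the two arguments of $W$ differ by the projection $h^{-1}h$. I would handle this by writing $g^{-1}(h^{-1}h)(\xi) = r\big(g^{-1}(\xi)\big)$ with $r := g^{-1}(h^{-1}h)g \in E$, pulling $r$ through $W$ by $E$-equivariance, and then absorbing it: $g^{-1}(\xi)$, and hence $W(g^{-1}(\xi))$, is fixed by $g^{-1}g$, and on vectors fixed by $g^{-1}g$ one checks $hg\,r = hg$ (using that idempotents commute, that $g(\zeta)$ is then fixed by $gg^{-1}$, and that $h h^{-1} h = h$). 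This is precisely the idempotent bookkeeping that the group case trivializes, and it is the heart of the lemma; everything else is the routine verification sketched above.
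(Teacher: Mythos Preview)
Your proposal is correct and takes essentially the same approach as the paper: the paper defines the very same operator $V(\varphi_g\otimes\xi)=\varphi_g\otimes g\,u\,g^{-1}(\xi)$ and, for $G$-equivariance, simply refers back to the computation for $F'$ in Lemma~\ref{lemma31meyer} (your idempotent bookkeeping is exactly the unpacking of ``$h^{-1}h$ is central in $\call(\cale)$'' used there), while for isometry it does the explicit inner-product calculation using $\langle\varphi_g,\varphi_h\rangle=\sum_{f\in F}f$ from $E$-continuity. Your observation that $W$ is automatically $E$-equivariant by compatibility is the precise ingredient making that centrality argument go through for an operator between two different modules, and is implicit in the paper's shortcut.
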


\begin{proof}
Let $u \in \call(\cale_1,\cale_2)$ be a unitary operator.
Then it can be checked that $V: \widehat{\ell^2}(G,\cale_1) \rightarrow  \widehat{\ell^2}(G,\cale_2)$
given by $V(\varphi_g \otimes \xi) := \varphi_g \otimes g u g^{-1}(\xi)$
defines an isomorphism of $G$-Hilbert $A$-modules. Note that $V$ is defined like $F'$ in Lemma
\ref{lemma31meyer}, so we can take the equivariance proof from there. For the inner product
note that $\langle \varphi_g , \varphi_h \rangle = \sum_{f \in F} f$ for a finite set $F \subseteq E$
with $fg = f h$ and $f \le g g^* h h^*$ by Lemma \ref{lemmaEcontinuity}, so that
\begin{eqnarray*}
\langle V(\varphi_g \otimes \xi),V(\varphi_h \otimes \eta)\rangle
&=& \sum_{f \in F} f \otimes \langle fgug^*f(\xi),f h u h^* f(\eta) \rangle\\
&=& \langle \varphi_g \otimes \xi,\varphi_h \otimes \eta\rangle.
\end{eqnarray*}
\end{proof}

The last lemma implies also the validity of an literally identical version of \cite[Theorem 2.4]{mingophillips}
($L^2(G,\cale)^\infty \cong L^2(G,A)^\infty$ $G$-equivariantly)
in our setting by the same proof.

In \cite[Lemma 4.3]{meyer} some homotopy results with $\calf^\infty$ are recalled.
The canonical proofs, using $L^2([0,1])$ (see \cite[Lemma 1.3.7]{thomsenbook}) work also inverse semigroup equivariantly.
In \cite[Lemma 4.4]{meyer} we note that we have to replace $\big( g(F)-F \big ) \phi(a)$ by
$\big(g(F) - g g^{-1}(F) \big) \phi(a)$.
We recall that $g g^{-1}$ is in the center of $\call(\cale)$ so that $\cale' := J \cdot \cale$ is $G$-invariant
because $g(J \cdot \cale)= g(J) \cdot g(\cale) \subseteq \cale'$. Everything goes through unchanged.
%

Section 5.1 in \cite{meyer} can be ignored since we do not need it.
In \cite[Section 5.2]{meyer} we have to replace $Q A:= A * A$ by the compatible
free product $Q A:=A *^X A$ by identifying $e(a) * b$ and $a * e(b)$ in $A * A$ for all $a,b \in A$ and $e \in E$.
Because of this identification, the diagonal action $g(a_1* \cdots *a_n) :=g(a_1) * \ldots * g(a_n)$ turns $QA$ to a $G$-algebra.
The kernel of the canonical $G$-equivariant $*$-homomorphism $A *^X A \rightarrow A$ is denoted by $q(A)$.

\begin{definition}	    \label{defintionKGNA}
{\rm
For a $G$-algebra $A$ we define
\begin{eqnarray*}
\K(G \N) A &:=& \K \big (\ell^2(\N) \otimes \big (\widehat{\ell^2}(G) \otimes^X A \big )\big )
\;\cong\; \K \big( (L^2(G,A))^\infty \big)
\end{eqnarray*}
(by $\cale^\infty := \ell^2(\N) \otimes \cale$ in \cite[Section 2.1.1]{meyer}).
(Confer also (\ref{isol2a}).)
}
\end{definition}

In accordance to the rules of Definition \ref{defHilbertCXmodules}
we may also write
$\K(G \N) A = \K \big ( C_0(X)^\infty \otimes^X \big (\widehat{\ell^2}(G) \otimes^X A \big )\big )$.

%

In the last paragraph of the proof of \cite[Proposition 5.4]{meyer}
one rewrites a special Kasparov triple $(\cale,\phi,F)$ as the Kasparov triple
$(\cale^+ \oplus \cale^+, \phi^+ \oplus \phi^-, P)$ by using the grading
on $\cale$ and identifying $\cale^-$ with $\cale^+$ via $F$;
$P$ is then the flip operator.
Here we need Definition \ref{defGequivariance} that $F$ commutes with the $G$-action such that
$F$ restricts to a $G$-equivariant Hilbert module isomorphism between $\cale^-$ and $\cale^+$,
and thus $\phi^-: A \rightarrow \call(\cale^+)$ is $G$-equivariant.





\begin{definition}  
{\rm
For $G$-algebras $A$ and $B$ set $[A,B]_s:= [\K(G\N)A,\K(G\N)B]$, where $[A,B]$ denotes the homotopy group of $*$-homomorphisms from $A$ to $B$.
Denote by $[C^*_G]_s$
the category of separable $G$-algebras as objects and morphism sets $[A,B]_s$ between objects $A$ and $B$.

}
\end{definition}

\begin{definition}    \label{defStability}
{\rm
A functor $F: C^*_G \rightarrow \calc$ into a category $\calc$ is called {\em stable}
iff the map $F(\K(\calh) A) \rightarrow F(\K(\calh \oplus \calh')A)$
induced by the inclusion $\calh \subseteq \calh \oplus \calh'$
is an isomorphism for all countably generated $G$-Hilbert $C_0(X)$-modules $\calh,\calh'$ and
all separable $G$-algebras $A$.
}
\end{definition}


Note that in \cite[Proposition 6.1]{meyer} $\C \oplus L^2(G \N)$
has to be replaced by $C_0(X) \oplus L^2(G \N)$.

\begin{proposition}[Cf. Proposition 6.3 of \cite{meyer}]
The canonical functor $C^*_G \rightarrow KK^G$ is a split exact stable homotopy functor.
\end{proposition}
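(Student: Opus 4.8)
The plan is to follow Meyer's proof of Proposition 6.3 verbatim, using the dictionary already established in this section to reinterpret each Hilbert-space construction as a compatible $C_0(X)$-module construction. First I would recall what must be verified: the canonical functor $C_G: C^*_G \to KK^G$ is (a) split exact, (b) stable in the sense of Definition \ref{defStability}, and (c) homotopy invariant. Homotopy invariance is immediate from the definition of $KK^G$ via homotopy induced by $\E^G(A,B[0,1])$, so no new work is needed there. Split exactness is a standard property of equivariant $KK$-theory and holds for our compatible $KK^G$ by \cite[Proposition 1.1]{burgiUniversalKK}, which was already invoked in the proof of Lemma \ref{lemmaFunctorInduction}; I would simply cite it. The substantive content is stability.

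For stability I would argue exactly as Meyer does, but replacing every Hilbert space $\calh$ by a countably generated $G$-Hilbert $C_0(X)$-module, every occurrence of $\C$ by the module $C_0(X)$, and every exterior tensor product $\otimes$ by the compatible balanced product $\otimes^X$, in accordance with Definition \ref{defHilbertCXmodules}. Concretely, one must show that for countably generated $G$-Hilbert $C_0(X)$-modules $\calh, \calh'$ and separable $A$, the inclusion $\calh \subseteq \calh \oplus \calh'$ induces an isomorphism $C_G(\K(\calh)\otimes^X A) \to C_G(\K(\calh \oplus \calh')\otimes^X A)$ in $KK^G$. This follows by producing the usual $KK^G$-class implementing the corner embedding and its inverse via a $G$-equivariant partial isometry between $\calh \otimes^X A$ and $(\calh \oplus \calh')\otimes^X A$; the relevant absorption and isomorphism results are precisely those assembled above, namely the Mingo--Phillips type lemma and the stabilization $L^2(G,\cale)^\infty \cong L^2(G,A)^\infty$ established just before Definition \ref{defintionKGA}. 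The key point making all of this legitimate is that $g g^{-1} \in \call(\cale)$ is central, so that every derived space from $\widehat{\ell^2}(G)$ carries a genuine $G$-action and the induced action on $\call$ is multiplicative, as explained at the start of this section.

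The main obstacle I anticipate is the same gap already flagged in Lemma \ref{lemmaFunctorInduction}: stability as in Definition \ref{defStability} is formulated with the diagonal $G$-action on $\K(\calh)\otimes^X A$, forcing the action on the compact operators $\K(\calh)$ to be of the restricted, compatibility-induced type rather than an arbitrary $G$-action. In the group case Meyer may use an arbitrary action, and his argument occasionally exploits this freedom. I would check that the corner-embedding argument only ever requires the diagonal action, so that the restriction is harmless here; where Meyer's original proof implicitly uses a more general action I would substitute the compatible version and verify, using centrality of $g g^{-1}$, that the resulting operators remain $G$-equivariant and that the $F$-connection condition survives. As was noted in Lemma \ref{lemmaFunctorInduction}, this restricted form of stability is claimed to be sufficient in \cite[Section 3.2]{meyernest}, and I would lean on that remark to conclude that the three properties hold, completing the proof.
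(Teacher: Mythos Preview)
Your overall plan is sound (homotopy invariance trivial, split exactness by citation, stability the only substantive point), and that matches the paper. But for stability you take a different and somewhat heavier route than the paper does.

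The paper does \emph{not} follow Meyer's argument with the dictionary, and in particular does not invoke the Mingo--Phillips lemma or the stabilization $L^2(G,\cale)^\infty \cong L^2(G,A)^\infty$ here at all. Instead it proceeds as in \cite[Lemma 3.1]{thomsen}: one writes down the two explicit cycles $(\iota,\K(\calh\oplus\calh'),0)\in KK^G(\K(\calh),\K(\calh\oplus\calh'))$ and $(id,\K(\calh\oplus\calh')p,0)\in KK^G(\K(\calh\oplus\calh'),\K(\calh))$, where $p$ is the projection onto $\calh$, and checks they are mutual inverses via the bimodule isomorphism $\K(\calh\oplus\calh')p\otimes_{\K(\calh)}\K(\calh\oplus\calh')\cong\K(\calh\oplus\calh')$, $a\otimes b\mapsto ab$. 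Then one applies the compatible version $\tilde\tau_A$ of Definition \ref{defTau} (with $\otimes^X$ in place of $\otimes$) to push this isomorphism to $\K(\calh)\otimes^X A$ and $\K(\calh\oplus\calh')\otimes^X A$. This is a direct Morita-type argument and needs no absorption theorem.

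Your appeal to a ``$G$-equivariant partial isometry between $\calh\otimes^X A$ and $(\calh\oplus\calh')\otimes^X A$'' supplied by Mingo--Phillips is slightly off: Mingo--Phillips gives isomorphisms only after stabilizing by $L^2(G)^\infty$, whereas Definition \ref{defStability} demands the result for \emph{arbitrary} $\calh,\calh'$, where no such equivariant unitary need exist. What actually works is the corner-embedding/imprimitivity bimodule argument above, which does not require the two modules to be isomorphic. So your strategy would need to be redirected to that bimodule argument anyway; the paper's Thomsen-style proof is both shorter and avoids the detour.
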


\begin{proof}
We only remark stability and may prove this like in \cite[Lemma 3.1]{thomsen}.
Consider $\calh$ and $\calh'$ as in Definition \ref{defStability}, and
prove that the two cycles $(\iota, \K(\calh \oplus \calh'),0) \in KK^G(\K(\calh),\K(\calh\oplus \calh'))$
($\iota$ induced by the inclusion $\calh \subseteq \calh \oplus \calh'$)
and
$(id, \K(\calh \oplus \calh') p,0) \in KK^G(\K(\calh \oplus \calh'),\K(\calh))$ are inverses to each other,
where $p \in \call(\calh \oplus \calh')$ is the canonical projection onto the first factor $\calh$,
because $\K(\calh \oplus \calh') p \otimes_{\K(\calh)} \K(\calh \oplus \calh')
\cong \K(\calh \oplus \calh')$ via $a \otimes b \mapsto a b$.
We apply then the compatible version $\tilde \tau_A$ of Definition \ref{defTau} to these isomorphisms,
where $\otimes$ is replaced by the compatible tensor product $\otimes^X$,
to get isomorphisms with $\otimes^X A$.
%
%
%
\end{proof}


%
Since we did not observe every detail of \cite{meyer} - and this 
concerns particularly
\cite[Proposition 5.4]{meyer}, where we mainly only
focused on obvious differences 
between groups and inverse semigroups - the cautious reader should
view the following theorem as a conjecture!

\begin{theorem}[Adaption of Theorem 6.5 of \cite{meyer}, Conjecture]   \label{adaptionth65}
Assume that $G$ is $E$-continuous.
Let $A$ and $B$ separable (ungraded) $G$-algebras.
Define $q_s A := q(\K(G\N)A)$.
The canonical functor $C^*_G \rightarrow KK^G$ factors through a functor
$\sharp :[C^*_G]_s \rightarrow KK^G$.
There is a morphism $\pi_A^s \in [q_s A,A]_s$ (see \cite{meyer}), such that $\sharp(\pi_A^s) \in KK^G(q_s A,A)$
is invertible.
Then the map
$$\Delta:[q_s A,q_s B]_s \rightarrow KK^G(A,B), \qquad \Delta(f) = \sharp(\pi_B^s) \circ \sharp(f) \circ {\sharp(\pi_A^s)}^{-1}$$
is a natural isomorphism.
Hence the Kasparov product on $KK^G$ corresponds to the composition of homomorphisms.
\end{theorem}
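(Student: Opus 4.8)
The plan is to transport Meyer's proof of \cite[Theorem 6.5]{meyer} step by step, systematically replacing the Hilbert-space constructions by the compatible $C_0(X)$-module analogues of Definitions \ref{defHilbertCXmodules}, \ref{defintionKGA} and \ref{defintionKGNA}, and every exterior tensor product by the balanced $\otimes^X$. The first step is to verify that $\sharp$ is well defined. By the proposition above (Cf. Proposition 6.3 of \cite{meyer}) the canonical functor $C^*_G \rightarrow KK^G$ is stable, split exact and homotopy invariant; homotopy invariance identifies homotopic $*$-homomorphisms in $KK^G$, and stability (Definition \ref{defStability}) makes the stabilization map $A \rightarrow \K(G\N)A$ a $KK^G$-isomorphism via the identification $\K(G)A \cong \K(L^2(G,A))$ of (\ref{isol2a}). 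Hence the functor descends from $C^*_G$ to the category $[C^*_G]_s$ and yields $\sharp$.

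The second step is to exhibit $\pi_A^s$ and show that $\sharp(\pi_A^s)$ is invertible. Here $\pi_A^s$ is the composite, computed in $[C^*_G]_s$, of the Cuntz map $q(\K(G\N)A) \rightarrow \K(G\N)A$ with the destabilization $\K(G\N)A \simeq A$. The standard Cuntz splitting argument, which is valid for any split exact functor, shows that the canonical map $q(D) \rightarrow D$ is a $KK^G$-equivalence for every $G$-algebra $D$, while stability gives $\K(G\N)A \cong A$ in $KK^G$. Thus $\sharp(\pi_A^s)$ is a composite of $KK^G$-isomorphisms and is therefore invertible.

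The heart of the argument is the third step, that $\Delta$ is a natural isomorphism. Naturality and well-definedness are formal once $\sharp$ is functorial and $\sharp(\pi_A^s)$ is invertible, so the content is bijectivity, for which I would construct an explicit inverse along the lines of Cuntz's quasihomomorphism picture. Given $x \in KK^G(A,B)$, I would use Lemma \ref{lemma31meyer} together with the reductions adapted from \cite[Proposition 5.4]{meyer} to represent $x$ by an $\calh$-special Kasparov triple whose operator $F$ is a $G$-equivariant symmetry (Definition \ref{defGequivariance}); splitting by the grading as $(\cale^+ \oplus \cale^+, \phi^+ \oplus \phi^-, P)$ with $P$ the flip then presents $x$ as a pair of $G$-equivariant $*$-homomorphisms $A \rightrightarrows \call(L^2(G,B)^\infty)$ agreeing modulo $\K(G\N)B$, i.e. a quasihomomorphism. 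Such a quasihomomorphism factors canonically through the compatible free product and hence through $q_s A$, landing in $\K(G\N)B$ and, after symmetrizing, in $q_s B$; this assignment inverts $\Delta$. Injectivity reduces to the homotopy statements adapted from \cite[Lemmas 4.3 and 4.4]{meyer}, showing that two homomorphisms inducing the same $KK^G$-class become homotopic after stabilization. The final clause, that the Kasparov product corresponds to composition of homomorphisms, is then immediate from the functoriality of $\sharp$ and the definition of $\Delta$.

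The main obstacle is not the formal bookkeeping but the verification that every intermediate construction stays \emph{compatible} --- that $g g^{-1}$ remains a central projection in $\call$ of each derived module, so that the induced $G$-actions on $\K(G\N)A$, on the compatible free products, and on the quasihomomorphism algebras are genuinely multiplicative and hence honest $G$-actions. This is precisely the reason $\widehat{\ell^2}(G)$ and $\otimes^X$ were introduced, and why $E$-continuity is assumed: by Lemma \ref{lemmaEcontinuity} the inner products of Definition \ref{defCompatibleL2} lie in $\mbox{Alg}^*(E) \subseteq C_0(X)$, so that $\widehat{\ell^2}(G)$ is a bona fide compatible $G$-Hilbert $C_0(X)$-module. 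Checking that Meyer's special-triple representation and his homotopy arguments survive these replacements at \emph{each} step is where the genuine labour, and the real risk of a gap, lies; this is why we prefer to state the theorem as a conjecture.
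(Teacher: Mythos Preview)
Your proposal is correct and mirrors the paper's own treatment: the paper does not give a self-contained proof of this theorem but instead assembles the compatible analogues of Meyer's ingredients (the special triples via Lemma \ref{lemma31meyer}, the stabilization isomorphism (\ref{isol2a}), the adapted \cite[Propositions 3.2, 5.4, 6.1, 6.3]{meyer} and \cite[Lemmas 4.3, 4.4]{meyer}, the compatible free product $A *^X A$, and the Mingo--Phillips lemma) and then explicitly declares the result a conjecture because not every step has been checked. Your write-up follows exactly this route, invokes the same adapted lemmas in the same roles, identifies the same obstacle (centrality of $g g^{-1}$ in every derived $\call(\cale)$), and reaches the same honest conclusion; there is nothing to add.
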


By composing the functor $\Delta$ with the canonical functor $KK^G \rightarrow IK^G$ we
see that we can rewrite morphisms in $IK^G(A,B)$ which are represented by compatible cycles
also as $*$-homomorphisms in $IK$-theory.

%

\section{$\widetilde{KK}^G$ is a triangulated category}

\label{sectionTriangulated}

In this Section we recall the facts which show that $\widetilde{KK}^G$ is a triangulated category.
Everything from groups $G$ to inverse semigroups $G$ goes literally and canonically through
and needs no adaption,
the only exception from this being axiom (TR1) which is essentially Theorem \ref{adaptionth65}.
Actually we shall work with a slightly different category, the category $\widetilde{KK}^G$, rather than
the category $KK^G$ as we might expect. However, both categories are equivalent.

\begin{definition}
{\rm
Define $\widetilde{KK}^G$ (see \cite[Section 2.1]{meyernest})
to be the category where the objects are pairs $(A,n)$ for all separable
$G$-algebras $A$ and $n \in \Z$, and the morphism set between two objects $(A,n)$ and $(B,m)$ is defined to be
\begin{eqnarray*}
\widetilde{KK}^G\big((A,n),(B,m) \big) &:=& \lim_{p \in \N} KK^G(\Sigma^{n+p} A, \Sigma^{m+p} B).
\end{eqnarray*}
The maps in the direct limit are the maps $\tau_{C_0(\R)}$ and of course we require $n+p,m+p \ge 0$.
The composition of the morphisms is canonically via the Kasparov product.
}
\end{definition}

By Bott periodicity $\tau_{C_0(\R)}$ is an isomorphism, and so we may omit the direct limit.
However, it is needed at least to make desuspension, defined next.

\begin{definition}
{\rm
Define a {suspension functor} $\Sigma$ from $\widetilde{KK}^G$ to $\widetilde{KK}^G$
by $\Sigma(A,n) := (A,n+1)$ and $\Sigma(x):= \tau_{C_0(\R)}(x) \in KK^G(\Sigma^{n+p+1}A,\Sigma^{m+p+1}B)
\subseteq \widetilde{KK}^G \big((A,n+1),(B,m+1)\big)$
for all $x \in KK^G(\Sigma^{n+p}A,\Sigma^{m+p}B)
\subseteq \widetilde{KK}^G \big((A,n),(B,m)\big)$.
}
\end{definition}

The desuspension functor $\Sigma^{-1}$ on $\widetilde{KK}^G$ is defined to precisely reverse the functor $\Sigma$,
and we have $\Sigma \circ \Sigma^{-1} = \Sigma^{-1} \circ \Sigma = id_{\widetilde{KK}^G}$,
so $\Sigma$ is an isomorphism of categories.
The canonical map $KK^G \rightarrow \widetilde{KK}^G$ sending $A$ to $(A,0)$ 
is an equivalence of categories.
Indeed, by Bott periodicity, $KK^G(\Sigma^{2n}A,B) \cong KK^G(A,B)$, every element $(A,n)$ is isomorphic to some $(B,0)$ in $\widetilde{KK}^G$.
(We have $(A,2n) \cong (A,0)$ and $(A,2n+1) \cong (\Sigma A,0)$.)
Most of the time it is sufficient to think of $\widetilde{KK}^G$ just as $KK^G$.

Having now a suspension functor $\Sigma$, we further need distinguished triangles to turn $\widetilde{KK}^G$ into a triangulated
category.

\begin{definition}    \label{defMappingConeTriangle}
{\rm
Let $A$ and $B$ $G$-algebras. Then to an equivariant $*$-homomorphism $f:A \rightarrow B$
we associate the {\em mapping cone} (cf. \cite[Section 2.1]{meyernest}), which is the $G$-algebra
\begin{eqnarray}   \label{defmappingcone}
\mbox{cone}(f)  &:=&  \{ (a,b) \in A \times C_0 \big((0,1],B \big)\,|\, f(a) = b(1)\},
\end{eqnarray}
and the {\em mapping cone triangle}, which is the sequence of equivariant $*$-homomorphisms
\begin{equation}   \label{mappingconetriangle}
\xymatrix{\Sigma B \ar[r]^\iota & \mbox{cone}(f) \ar[r]^\epsilon & A \ar[r]^f  & B },
\end{equation}
where $\iota$ is the canonical inclusion (setting the coordinate $a$ to zero) and
$\epsilon$ is the canonical projection onto $A$.
}
\end{definition}

\begin{definition}  \label{defExactTriangle}
{\rm
A diagram $\Sigma B' \rightarrow C' \rightarrow A' \rightarrow B'$
in $\widetilde{KK}^G$ is called an {\em exact triangle} (see \cite[Section 2.1]{meyernest}) if it is isomorphic to a mapping cone
triangle (\ref{mappingconetriangle}) in $\widetilde{KK}^G$, that is, there exists an equivariant $*$-homomorphism $f:A \rightarrow B$
and a commutative diagram
$$
\xymatrix{\Sigma B \ar[r]^\iota \ar[d]^{\Sigma \beta} & \mbox{cone}(f) \ar[r]^\epsilon \ar[d]^{\gamma} & A \ar[r]^f \ar[d]^{\alpha} & B \ar[d]^{\beta}\\
\Sigma B' \ar[r] & C' \ar[r] & A' \ar[r] & B' }
$$
where $\alpha,\beta$ and $\gamma$ are isomorphisms and the suspension $\Sigma \beta$ of $\beta$ is of course also an isomorphism.
}
\end{definition}

For convenience of the reader we recall the definition of extension triangles, which are exact triangles
in the sense of Definition \ref{defExactTriangle}, and which are technically used in the proof that $\widetilde{KK}^G$
is a triangulated category.

\begin{definition}[Definition 2.3 in \cite{meyernest}]    \label{defExtensionTriangle}
{\rm
Let $\cale:0 \rightarrow A \stackrel{i}{\rightarrow} B \stackrel{p}{\rightarrow} C \rightarrow 0$
be an extension of $G$-algebras and associate to it the commuting diagram (without the indicated map $\mu$)
\begin{equation}  \label{diagramExtensiontriangle}
\xymatrix{
\Sigma C \ar[r]^\mu \ar[d]^{id}  & A \ar[r]^i \ar[d]^\alpha  & B \ar[r]^p \ar[d]^{id}  & C \ar[d]^{id}  \\
\Sigma C \ar[r]^\iota  & \mbox{cone}(p) \ar[r]^\epsilon  & B \ar[r]^p  & C  \\
}
\end{equation}
where $\mbox{cone}(p) \subseteq B \times C_0((0,1],C)$,
$\iota(c):= (0,c)$, $\epsilon(b,c):= b$ and $\alpha(a):= (i(a),0)$
for all $c \in C_0((0,1),C)$, $b \in B$ and $a \in A$.
The extension $\cale$ is called {\em admissible} if $\alpha$ is an isomorphism in $\widetilde{KK}^G$.
In this case we have an obvious 
morphism $\mu:= \alpha^{-1} \circ i$ which makes the diagram
(\ref{diagramExtensiontriangle}) to an isomorphism of exact triangles in $\widetilde{KK}^G$
in the sense of Definition
\ref{defExactTriangle}
(since the second line is obviously a mapping cone triangle), and in this case we call the first line
of (\ref{diagramExtensiontriangle}), which is an exact triangle, also the {\em extension triangle} of $\cale$.
%
}
\end{definition}


We shall not need the following lemma but state it as an interesting observation in its own.
It is proved like in the last paragraph of \cite[Section 2.3]{meyernest}.

\begin{lemma}[Section 2.3 in \cite{meyernest}]
Every exact triangle is isomorphic to an extension triangle in $\widetilde{KK}^G$.
\end{lemma}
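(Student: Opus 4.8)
The plan is to reduce to mapping cone triangles and then realise each of these as an extension triangle by means of the mapping cylinder, exactly as in the last paragraph of \cite[Section 2.3]{meyernest}. By Definition \ref{defExactTriangle}, any exact triangle is isomorphic in $\widetilde{KK}^G$ to a mapping cone triangle
$$
\Sigma B \stackrel{\iota}{\longrightarrow} \mbox{cone}(f) \stackrel{\epsilon}{\longrightarrow} A \stackrel{f}{\longrightarrow} B
$$
for some $G$-equivariant $*$-homomorphism $f:A \rightarrow B$ (Definition \ref{defMappingConeTriangle}). Since ``being isomorphic to an extension triangle'' is stable under isomorphism of triangles, it is enough to exhibit this particular triangle as an extension triangle.

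To do so I would introduce the mapping cylinder
$$
\mbox{cyl}(f) := \{ (a,\beta) \in A \times C([0,1],B) \,|\, \beta(1) = f(a) \},
$$
a $C^*$-algebra under pointwise operations, made into a $G$-algebra by the diagonal action $g(a,\beta) := (g(a), g \circ \beta)$. This action preserves $\mbox{cyl}(f)$ because $f$ is $G$-equivariant, and it is compatible (that is, $g g^{-1}$ acts centrally) since the defining relations of Definition \ref{defCstar} hold pointwise in the interval variable and are inherited from $A$ and $B$. Evaluation at $0$ yields a $G$-equivariant surjection $\mbox{ev}_0: \mbox{cyl}(f) \rightarrow B$ whose kernel is precisely $\mbox{cone}(f)$, so that
$$
\mathcal{E}_f: \quad 0 \longrightarrow \mbox{cone}(f) \longrightarrow \mbox{cyl}(f) \stackrel{\mbox{ev}_0}{\longrightarrow} B \longrightarrow 0
$$
is an extension of $G$-algebras. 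Feeding $\mathcal{E}_f$ into Definition \ref{defExtensionTriangle} produces the candidate extension triangle $\Sigma B \stackrel{\mu}{\rightarrow} \mbox{cone}(f) \rightarrow \mbox{cyl}(f) \stackrel{\mbox{ev}_0}{\rightarrow} B$.

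It remains to match this with the mapping cone triangle. The inclusion $j:A \rightarrow \mbox{cyl}(f)$, $j(a) = (a, \, t \mapsto f(a))$, is a $G$-homotopy equivalence: the retraction is $(a,\beta)\mapsto a$, and the homotopy from the identity to $j \circ (\mbox{retraction})$ is the reparametrisation $\beta \mapsto \beta\big((1-s)\cdot + s\big)$, which is $G$-equivariant because $G$ acts on the values of $\beta$ while the reparametrisation acts on $[0,1]$. By homotopy invariance of $KK^G$ (\cite[Proposition 1.1]{burgiUniversalKK}), $j$ is invertible in $KK^G$. One then verifies that the diagram
$$
\xymatrix{
\Sigma B \ar[r]^{\iota} \ar[d]^{id} & \mbox{cone}(f) \ar[r]^{\epsilon} \ar[d]^{id} & A \ar[r]^{f} \ar[d]^{j} & B \ar[d]^{id} \\
\Sigma B \ar[r]^{\mu} & \mbox{cone}(f) \ar[r] & \mbox{cyl}(f) \ar[r]^{\mbox{ev}_0} & B
}
$$
is an isomorphism of triangles in $\widetilde{KK}^G$: the right-hand square commutes on the nose since $\mbox{ev}_0 \circ j = f$, while the two remaining squares commute in $KK^G$ after the elementary reparametrisation homotopies that identify the kernel inclusion $\mbox{cone}(f) \hookrightarrow \mbox{cyl}(f)$ with $j \circ \epsilon$ and the connecting morphism $\mu$ with $\iota$. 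Admissibility of $\mathcal{E}_f$ (the requirement that the map $\alpha$ of Definition \ref{defExtensionTriangle} be invertible) is the standard fact for mapping cylinder extensions in \cite[Section 2.3]{meyernest}, whose proof uses only homotopy invariance and such reparametrisations.

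I expect the only point needing genuine attention --- and the main ``obstacle'' in the inverse semigroup setting --- to be bookkeeping rather than mathematics: one must check that $\mbox{cyl}(f)$, $\mbox{cone}(f)$ and the mapping cone $\mbox{cone}(\mbox{ev}_0)$ remain \emph{compatible} $G$-algebras and that every auxiliary homotopy is $G$-equivariant in the compatible sense. All of this is automatic because the $G$-action is diagonal and acts pointwise in the interval variable, so $g g^{-1}$ stays central and compatibility is inherited from $A$ and $B$; accordingly the argument carries over from \cite{meyernest} without essential change.
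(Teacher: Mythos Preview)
Your proposal is correct and follows exactly the approach the paper intends: the paper's ``proof'' is simply the sentence ``It is proved like in the last paragraph of \cite[Section 2.3]{meyernest},'' and you have faithfully reconstructed that mapping-cylinder argument, including the observation that the only extra care in the inverse-semigroup setting is the (routine) verification that all auxiliary algebras and homotopies remain compatible $G$-algebras.
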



\begin{proposition}[Proposition 2.1 and Appendix A of \cite{meyernest}]		\label{propositiontriangulatedcategory}
Suppose that $G$ is $E$-continuous
and Theorem \ref{adaptionth65} is correct.
The category $\widetilde{KK}^G$ endowed with the translation functor $\Sigma^{-1}$ (the suspension functor
in a triangulated category) and exact triangles from Definition \ref{defExactTriangle}
is a triangulated category.
\end{proposition}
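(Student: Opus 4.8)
The plan is to verify the axioms of a triangulated category for $(\widetilde{KK}^G,\Sigma^{-1})$ exactly along the lines of Meyer and Nest's Proposition 2.1 and Appendix A of \cite{meyernest}, isolating the one axiom whose proof is genuinely inverse-semigroup specific. As preparation I would record the structural facts already at hand: $\Sigma$ is an isomorphism of categories, so the translation functor $\Sigma^{-1}$ is invertible; the class of exact triangles is closed under isomorphism by Definition \ref{defExactTriangle}; and the mapping cone triangle of $id_A$ is isomorphic to a degenerate triangle, which settles the normalization requirement.

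For the rotation axiom (TR2) I would show that a triangle is exact if and only if its rotation is, which in the mapping cone formalism reduces to the standard $KK^G$-equivalence between $\Sigma B$ and the suspension of the iterated cone. The comparison maps are invertible in $\widetilde{KK}^G$ precisely because the canonical functor $C^*_G \rightarrow KK^G$ is split exact, stable and homotopy invariant (by the property established above, Cf.\ Proposition 6.3 of \cite{meyer}), and Bott periodicity lets us match up the suspensions inside $\widetilde{KK}^G$. This argument uses only the mapping cone construction and its formal properties, and carries over word for word from \cite{meyernest}.

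The decisive axiom is (TR1): every morphism must embed in an exact triangle. Since $\Sigma^{-1}$ is an automorphism it suffices to treat $x \in KK^G(A,B)$. Here I would invoke the Cuntz picture, Theorem \ref{adaptionth65}: under the standing hypothesis that $G$ is $E$-continuous, $x$ can be written as $z \circ C_G(f) \circ y^{-1}$ with $y,z$ isomorphisms in $KK^G$ and $f:A'\rightarrow B'$ an honest $G$-equivariant $*$-homomorphism. The mapping cone triangle (\ref{mappingconetriangle}) of $f$ is then an exact triangle, and conjugating it by $y$ and $z$ yields an exact triangle through $x$. This is the single step that fails for a general inverse semigroup: without $E$-continuity one cannot construct a compatible $\widehat{\ell^2}(G)$-space and hence cannot realize $KK^G$-morphisms by $*$-homomorphisms, so this is where the hypothesis is spent and the only place where the proof is more than a transcription.

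Finally, the completion axiom (TR3) and the octahedral axiom (TR4) I would take directly from Appendix A of \cite{meyernest}. Both are formal consequences of the mapping cone construction, the exactness of the extension triangles attached to admissible extensions (Definition \ref{defExtensionTriangle}), and the split exactness and homotopy invariance of $C_G$; none of these ingredients distinguishes a group from an inverse semigroup once the compatible $KK^G$-theory and its mapping cones are in place. I therefore expect the entire difficulty to be concentrated in (TR1) --- that is, in the correct functioning of Theorem \ref{adaptionth65} and the compatible $\widehat{\ell^2}(G)$-model --- with the remaining axioms amounting to bookkeeping.
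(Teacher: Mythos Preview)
Your proposal is correct and follows essentially the same approach as the paper: the paper likewise isolates (TR1) as the only axiom requiring genuine work, invokes Theorem \ref{adaptionth65} (hence $E$-continuity) to replace an arbitrary $KK^G$-morphism by a $*$-homomorphism and take its mapping cone triangle, and then defers all remaining axioms to Appendix A of \cite{meyernest} with the remark that those arguments use only canonical equivariant $*$-homomorphisms, homotopies and extension triangles and therefore go through verbatim. Your slightly more explicit discussion of (TR2)--(TR4) is consistent with but more detailed than the paper's one-line deferral.
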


\begin{proof}
One of the axioms of an triangulated category, the axiom (TR1) of \cite{neemanbook}, requires
that every morphism $f:A \rightarrow B$ in $\widetilde{KK}^G$ fits into an exact triangle
$\Sigma B \rightarrow C \rightarrow A \stackrel{f}{\rightarrow} B$.
If $f$ is actually a $*$-homomorphism then we may take the mapping cone triangle
as an exact triangle (see Definitions \ref{defMappingConeTriangle} and \ref{defExactTriangle}).
Given a general morphism $f \in KK^G(A,B)$ we rewrite it as the image of the map $\Delta$
of Theorem \ref{adaptionth65}, that is $f = x \circ g \circ y$, where $g:q_s A \rightarrow q_s B$ is an equivariant $*$-homomorphism, and $x \in {KK}^G(q_s A,A)$ and $y \in {KK}^G(B,q_s B)$
are isomorphisms in ${KK}^G$, and take the mapping cone triangle for $g$.

The rest of the axioms are proved in Appendix A of \cite{meyernest} directly by using canonical
equivariant $*$-homomorphisms including homotopies, and 
extension triangles as in Definition
\ref{defExtensionTriangle}. This canonical proof goes literally through also in our setting.
%
\end{proof}

Like in \cite{meyernest}, in the remainder of this paper we sloppily do not distinguish between the equivalent categories $KK^G$ and $\widetilde{KK}^G$ and shall work practically exclusively with $KK^G$.
%


\section{Some lemmas with restriction and induction}

\label{sectionLemmasResInd}

In this section we present a mix of lemmas which deal with restriction and induction functors
and which might be of independent interest.
They may be particularly interesting as they handle equivalence relations on inverse semigroups,
which are seldom considered and difficult for inverse semigroups as compared to groups,
and projections which do not appear in groups at all.
We shall often 
leave out notating the restriction functor $\mbox{Res}_H^G$
where it is obviously there
for better readability.

The following Lemma \ref{lemmatechnicalDirac} prepares Lemma \ref{lemmaKKLGsplit}.
They deal with expressions where induction and restriction functors come together.


\begin{lemma}    \label{lemmatechnicalDirac}
Let $U' \subseteq G$ a finite subinverse semigroup of $G$ and
$U$ its associated finite groupoid.
Let $L \subseteq G$ be a subinverse semigroup of $G$. Let $D$ be $G$-algebra.
Let $g \in G_U$ (that is, $g=g_0 u_0$ for some $g_0 \in G$ and $u_0 \in U^{(0)}$).
Define $L'$ as the subinverse semigroup of $G$ generated by $L \,\cup \,g_0 E(U') g_0^{*}$
and set
$M:=(g g^* L {g g^{*}} \, \cap \, g U g^{*}) \backslash \{0\} \subseteq \tilde G$.
Then we have an isomorphism of $L$-algebras
\begin{eqnarray*}
&&\theta: 
\mbox{Ind}_{M}^{L'} \mbox{Res}_G^M(D)  \longrightarrow
\{f \in \mbox{Ind}_{U}^G \mbox{Res}_G^U (D) \,|\, \mbox{$f$ has carrier in $L g U \cap G_U$}\}
\end{eqnarray*}
via $\theta(f)(lg u) = u^{*} g^* (f(lgg^{*}))$
for all $f \in 
\mbox{Ind}_{M}^{L'}(D)$, $l \in L$ and $u \in U$.

\end{lemma}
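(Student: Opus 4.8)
The plan is to realise $\theta$ as a fibrewise $*$-isomorphism governed by conjugation by $g$, after which everything reduces to a bijection of orbit spaces. First I would check that the source even makes sense: one verifies that $M=(gg^{*}Lgg^{*}\cap gUg^{*})\setminus\{0\}$ is a finite subgroupoid of $\widetilde{L'}$, so that $\mbox{Ind}_{M}^{L'}$ is defined in the sense of Definition \ref{defInductionAlg}, and one records the defining set $(L')_M=\{l'p\mid l'\in L',\ p\in M^{(0)},\ (l')^{*}l'\ge p\}$ on which its functions live. The elementary identities coming from $g\in G_U$ are used throughout: writing $g=g_0u_0$ one gets $g^{*}g=u_0\in U^{(0)}$ and $gu_0=g$, and taking the unit $1\in L$ shows $gg^{*}=gu_0g^{*}\in M^{(0)}$. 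Thus the ranges of all the conjugations below are pinned down.

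The engine of the proof is the map $c(x):=g^{*}xg$, which restricts to a groupoid isomorphism from $M$ onto $\{u\in U\mid gug^{*}\in gg^{*}Lgg^{*}\}\subseteq U$ and, at the level of coefficients, to a $*$-isomorphism $gg^{*}(D)\to g^{*}g(D)=u_0(D)$ intertwining the respective actions. Observe that the defining formula factors as $\theta(f)(lgu)=u^{*}\bigl(\theta(f)(lg)\bigr)$ with $\theta(f)(lg)=g^{*}\bigl(f(lgg^{*})\bigr)$, so the $U$-equivariance $\theta(f)(xt)=t^{-1}\bigl(\theta(f)(x)\bigr)$ required of a member of $\mbox{Ind}_{U}^G\mbox{Res}_G^U(D)$ is built into the formula. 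The genuine content of well-definedness is that the right-hand side depends only on the element $x=lgu$ and not on its factorisation: if $l_1gu_1=l_2gu_2$ then $l_1gg^{*}$ and $l_2gg^{*}$ are related by an element of $M$, and the $M$-equivariance $f(ym)=m^{-1}\bigl(f(y)\bigr)$ of $f$ is carried by $c$ into exactly the identity between $u_1^{*}g^{*}\bigl(f(l_1gg^{*})\bigr)$ and $u_2^{*}g^{*}\bigl(f(l_2gg^{*})\bigr)$ needed for the values to agree. The carrier condition and vanishing at infinity are then immediate from the formula and the vanishing of $f$.

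It remains to see that $\theta$ is a $*$-homomorphism of $L$-algebras and a bijection. Being defined pointwise through the $*$-isomorphism $g^{*}(\cdot)$ of corners of $D$, it preserves the pointwise product and involution and, being fibrewise isometric, the supremum norm. The $L$-equivariance is a direct comparison of the two $G$-action formulas of Definition \ref{defInductionAlg}: for $l'\in L$ one evaluates $\theta(l'(f))$ and $l'(\theta(f))$ at a general $lgu$ and cancels. For bijectivity I would write down the inverse explicitly by $\theta^{-1}(h)(lgg^{*}):=g\bigl(h(lg)\bigr)$, extended $M$-equivariantly over $(L')_M$; the two composites are the identity precisely because $c$ identifies $(L')_M/M$ with $(LgU\cap G_U)/U$ and transports coefficients by a fibrewise isometric $*$-isomorphism.

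The main obstacle is the orbit bookkeeping underlying well-definedness and bijectivity: one must show that $LgU\cap G_U$ decomposes into $U$-orbits in natural bijection with $(L')_M/M$, and it is here that the specific choice of $L'$ as the subinverse semigroup generated by $L\cup g_0E(U')g_0^{*}$ is decisive, since it is exactly what guarantees that every relevant class has a representative of the form $lg$ with $l\in L$ and that the conjugated stabilisers match. Keeping straight the three projections $g^{*}g=u_0$, $gg^{*}$, and the idempotents of $U'$ while verifying the independence of the factorisation is the delicate, if ultimately routine, part of the argument.
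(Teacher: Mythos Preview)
Your proposal is correct and follows essentially the same approach as the paper: verify that $M$ is a subgroupoid of $\widetilde{L'}$, check well-definedness of $\theta$ by comparing two factorisations $l_1gu_1=l_2gu_2$ via an element of $M$, write down the inverse explicitly as $f(lgg^{*})=g\bigl(j(lg)\bigr)$, and verify $L$-equivariance by direct computation. The one point the paper makes fully explicit, and which you flag as the ``main obstacle'' but leave as routine, is the concrete description $(L')_M=\{lgg^{*}\in\tilde G\mid l\in L,\ l^{*}l\ge gg^{*}\}$; the paper obtains this by writing a generic element of $L'$ as an alternating word in $L$ and $g_0E(U')g_0^{*}$ and collapsing it to the form $lp$, and this identity is then what drives both injectivity and surjectivity.
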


\begin{proof}
We may write $g = g_0 u_0$ for some $g_0 \in G$ and $u_0 \in U^{(0)}$,
and note that $g_0^* g_0 \ge u_0$ and $g^* g = u_0$.
Note that $M \subseteq \widetilde{L'}$ since $g g^* = g_0 u_0 g_0^*$ can be expressed in $\widetilde{L'}$. 
Of course, every element of $M$ has source and range projection $g u g^* g u^* g^* = g g^* \in \widetilde{G}$,
so $M$ is a subgroupoid (or even subgroup) of $\widetilde{L'}$.
If there is $l \in L$ such that $l^* l \ge g g^*$ then the indicated image of $\theta$ is nonempty,
if and only if $g l^* l g^* = g g^* \in M$, if and only $M$ is nonempty, the case we are considering now, because
otherwise $\theta$ is, correctly, the empty function.
%
%
%
Every element $l' \in L'$ may be written in the form
\begin{equation}   \label{linel}
l' = (g_0 u_1 g_0^*) l_1 (g_0 u_2 g_0^*) l_2 (g_0 u_3 g_0^*) \ldots l_n (g_0 u_n g_0^*) = l p
\end{equation}
for some $u_i \in E(U')$, $l_i, l \in L$ and $p \in E(L')$.
Then an element is in $(L' )_M \subseteq \tilde G$ 
if and only if it is of the form $l' g g^*$ with $l' \in L'$ and $l'^* l' \ge g  g^*$.
We may write $l' g  g^* = l p (g  g^*) = l g g^*$ by (\ref{linel}),
and because the source projection of $l' g g^*$ is $g g^*$, we also have $l^* l \ge g g^*$.
Hence we have obtained
\begin{equation}    \label{eqlsm}
(L')_M = \{ l g  g^*\, \in \tilde G |\, l \in L, \, l^* l \ge g g^* \}.
\end{equation}

To show that $\theta$ is well defined, consider an ambiguously 
represented element $l g u = l' g u' \in L g U \cap G_U$ for
$l,l' \in L$ and $u,u' \in U$. Notice that $l^* l, {l'}^* l' \ge g g^*$ (because of $G_U$),
and that source and range projections of $u$ and $u'$ are the same.
Thus $g u u'^* g^* = l^* l' g g^*$ is in $M$.
Hence
\begin{eqnarray*}
&& \theta(f)(l' g u') \;= \;u'^*g^*\big(f(l'g g^*)\big) \;=\;  u'^*g^*\big(f(l g u u'^* g^*)\big)   \\
&=&  u'^*g^* (g u u'^* g^*)^*\big (f(l g ) \big) \; = \;  u^*g^* \big (f(l g ) \big) \; = \;
\theta(f)(l g u).
\end{eqnarray*}

Injectivity of $\theta$ follows from $g u (\theta(f)(lgu))= g g^*(f(lg g^*)) = f(l g g^* g g^*)$
(because $g g^* \in M$)
and identity (\ref{eqlsm}). To check surjectivity of $\theta$, write a given $j \in \mbox{Ind}_U^G(D)$ with carrier
in $LgU \cap G_U$ as $j=\theta(f)$ for the 
$f \in \mbox{Ind}_{M}^{L'}(D)$
determined by $f(l g g^*) := g(j(lg))$ for all $l \in L$ (confer also (\ref{eqlsm})).
In verifying $L$-invariance of $\theta$, we compute
\begin{eqnarray*}
&& \theta \big(h(f) \big)(l g u_0) \; =\; g^* \big (h(f) (l g g^*) \big) \;=\;
g^* \big( f(h^* l g g^*) \big) \, [ h h^* \ge l g g^* l^*]    \\
&=& \theta(f)(h^* l g)\, [ h h^* \ge l g g^* l^*] \; = \; h\big(\theta(f)\big) (l g u_0)
\end{eqnarray*}
for all $h,l \in L$.
\end{proof}

\begin{lemma}    \label{lemmaKKLGsplit}
Let $H'$ a finite subinverse semigroup of $G$ and $H$ its associated finite subgroupoid of $\tilde G$. Let $L$ be a subinverse semigroup of $G$.
Let $D$ be a $G$-algebra.
Then there is an $L$-equivariant $*$-isomorphism  
\begin{eqnarray*}
\mbox{Res}_G^L \mbox{Ind}_H^G 
\mbox{Res}_G^H (D) &\cong& \bigoplus_{g \in J} \mbox{Res}_{L'_g}^L
\mbox{Ind}_{M_g}^{L'_g} 
\mbox{Res}_G^{M_g}(D),
\end{eqnarray*}
where $J \subseteq G$ is a 
subset and $M_g$ is the set $M$ of Lemma \ref{lemmatechnicalDirac} for $U':= H'$.
\end{lemma}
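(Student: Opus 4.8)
The plan is to realize $\mbox{Ind}_H^G \mbox{Res}_G^H(D)$ concretely, following Definition \ref{defInductionAlg}, as the algebra of functions $f: G_H \rightarrow D$ satisfying the $H$-equivariance $f(gt)=t^{-1}(f(g))$ (for $t\in H$ with $gt\in G_H$) and vanishing at infinity on the discrete quotient $G_H/H$, with the $L$-action given by left translation $(lf)(h)=[l^{-1}h\in G_H]\,f(l^{-1}h)$. The left $L$-action descends to $G_H/H$, so I would first partition $G_H/H$ into its $L$-orbits and choose $J$ to be a set of orbit representatives, one $g\in G_H$ per orbit (each such $g$ being of the form $g_0u_0$ with $g_0\in G$ and $u_0\in H^{(0)}$, as in Lemma \ref{lemmatechnicalDirac}). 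The $H$-saturated preimage in $G_H$ of the orbit through $gH$ is then exactly $LgH\cap G_H$, so these sets, as $g$ runs over $J$, are pairwise disjoint and cover $G_H$.

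Because $G_H/H$ is discrete, every $L$-orbit is clopen, so for each $g\in J$ the functions supported on $LgH\cap G_H$ form a closed, $L$-invariant $*$-subalgebra (invariance holding because left translation by $l\in L$ preserves each orbit). Decomposing an arbitrary $f$ into the sum of its restrictions to the orbits, the vanishing-at-infinity condition on all of $G_H/H$ translates precisely into the statement that for every $\epsilon>0$ only finitely many orbits carry a value of norm $\ge\epsilon$ and that each restriction itself vanishes at infinity on its orbit; this is exactly the defining condition of the $c_0$-direct sum. Hence
$$\mbox{Res}_G^L \mbox{Ind}_H^G \mbox{Res}_G^H(D)\cong\bigoplus_{g\in J}\{f\in \mbox{Ind}_H^G\mbox{Res}_G^H(D)\mid \mbox{carrier}(f)\subseteq LgH\cap G_H\}$$
as $L$-algebras.

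Finally I would apply Lemma \ref{lemmatechnicalDirac} with $U':=H'$ (so that $U=H$, the generating set of $L'_g$ is $L\cup g_0E(H')g_0^{*}$, and the group $M$ there is $M_g$) to each summand: its right-hand side is precisely the $g$-th summand above, and the map $\theta$ there is an $L$-equivariant $*$-isomorphism from $\mbox{Ind}_{M_g}^{L'_g}\mbox{Res}_G^{M_g}(D)$, viewed as an $L$-algebra via restriction from $L'_g$, that is from $\mbox{Res}_{L'_g}^L\mbox{Ind}_{M_g}^{L'_g}\mbox{Res}_G^{M_g}(D)$, onto it. Substituting these identifications summand by summand yields the claimed isomorphism.

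The main obstacle is the bookkeeping in the first two paragraphs: checking carefully that $LgH\cap G_H$ is genuinely the full $H$-saturated $L$-orbit preimage (so that distinct representatives give disjoint pieces covering $G_H$), that the left $L$-action is well defined on $G_H/H$ despite the partial-multiplication brackets $[\,\cdot\in G_H]$, and, most delicately, that the vanishing-at-infinity condition on the total space $G_H/H$ matches exactly the $c_0$-direct sum of the per-orbit conditions. Once the orbit decomposition is set up correctly, the identification of each individual summand is handed to us verbatim by Lemma \ref{lemmatechnicalDirac}.
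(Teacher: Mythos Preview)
Your proposal is correct and follows essentially the same route as the paper: partition $G_H$ into $L,H$-double-coset pieces $LgH\cap G_H$, decompose the induced algebra as the $c_0$-sum of the functions supported on each piece, and then invoke Lemma \ref{lemmatechnicalDirac} summand by summand. The paper differs only in emphasis: it spells out the verification that ``$g\sim g'$ iff $lgh=g'$ for some $l\in L$ with $l^*l\ge gg^*$ and $h\in H$'' is an equivalence relation on $G_H$ (exactly the bookkeeping you flag as the main obstacle), and it takes the $c_0$-direct-sum identification for granted rather than discussing the vanishing-at-infinity condition as you do.
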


\begin{proof}
Say that two elements $g,g' \in G_H$ are $L$-equivalent if $l g = g'$ for some $l \in L$ with $l^* l \ge g g^*$.
This relation is reflexive as $1 \in L$, symmetric because $l^* l g = g = l^* g'$ and $l l^* \ge l g g^* l^*
= g' g'^*$, and transitive because $l g= g' = l'' g''$ implies $g = l^* l'' g''$ and $l''^* l l^* l''
\ge l''^* l g g^* l^* l'' = l''^* l'' g g^* l''^* l'' = g g^*$.
Similarly, two elements in $g, g' \in G_H$ are said to be $L,H$-equivalent if $l g h = g'$ for some $l \in L$
with $l^* l \ge g g^*$ and some $h \in H$, and this is also an equivalence relation.
Its equivalence classes are exactly of the form $L g H \cap G_H \subseteq G_H$
(the intersection taken in $\tilde G$)
for all $g \in G$

For every $g \in G$
apply Lemma \ref{lemmatechnicalDirac} for $U':= H'$, and denote $\theta$ of Lemma \ref{lemmatechnicalDirac}
more precisely
by $\theta_g$, the image of $\theta_g$ by $F_g$, $M$ by $M_g$
and $L'$ by $L'_g$. Note that $F_g$ is a $L$-invariant $C^*$-subalgebra
of $\mbox{Ind}_H^G(D)$.
Choose from every $L,H$-equivalence class exactly one representative $g \in G$ and denote their
collection by $J \subseteq G$. (We remove those $g$ for which $F_g$ is empty.) Of course, we have a canonical $*$-isomorphism of $L$-algebras
\begin{eqnarray*}
\mbox{Res}_G^L \mbox{Ind}_H^G \mbox{Res}_G^H(D) &\cong& \bigoplus_{g \in J} F_g  \;\;\cong \;\; \bigoplus_{g \in J} \mbox{Res}_{L'_g}^L
\mbox{Ind}_{M_g}^{L'_g} \mbox{Res}_G^{M_g}(D),
\end{eqnarray*}
the last isomorphism being the one induced by the $\theta_g$s.
\end{proof}

The idea of the next lemma is
to get rid off the $\mbox{Res}_{L'_g}^L$-term appearing in the last lemma,
where $L$ and $L'_g$ distinguish only by 
projections which could not appear in a group.

\begin{lemma}   \label{lemmaIsoLLprime}
Let $L \subseteq G$ be a finite subinverse semigroup and $P \subseteq G$ a subset of projections.
Let $L' \subseteq G$ denote the subinverse semigroup generated by $L \cup P$.
Assume that $L'$ is $E$-unitary.
Let $A$ be a finite dimensional, commutative $L'$-algebra.
Let $B$ be a $L$-algebra. Then there exists an $n\ge 1$
and a $L'$-action on a (quite canonical) subalgebra $B' \subseteq B^n$ such that
$$KK^{L}(Res_{L'}^L\,A,B) \cong KK^{L'}(A,B').$$
The assignment $B \mapsto B'$ commutes canonically with all (infinite) direct sums.
\end{lemma}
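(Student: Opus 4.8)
The plan is to make the datum of $A$ completely explicit and then transport it to the coefficient side. Since $A$ is a finite-dimensional commutative $L'$-algebra, I write $A=C(Y)$ for a finite set $Y$; the idempotents $E(L')$ act through central projections $\alpha_p(1)\in A$, and for each minimal projection $e_y$ of $A$ the rule $p\mapsto[\alpha_p(e_y)=e_y]$ defines a character $\lambda_y:E(L')\to\{0,1\}$, i.e.\ a point $\lambda_y\in X_{L'}$. Thus $A$ is nothing but the finite subset $\{\lambda_y\}_{y\in Y}\subseteq X_{L'}$ together with the partial bijections of $Y$ induced by the full $L'$-action. I would first record the structural fact, exactly as in (\ref{linel}), that because $L'$ is generated by $L$ and projections every $l'\in L'$ is of the form $l'=lp$ with $l\in L$ and $p\in E(L')$; hence the $L'$-action on any object is determined by the $L$-action together with the action of the idempotent semilattice $E(L')$.

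Here the hypothesis that $L'$ is $E$-unitary enters decisively: if $l_1p_1=l_2p_2$ in $L'$, then $l_2^*l_1p_1$ is idempotent and lies below $l_2^*l_1$, so $E$-unitarity forces $l_2^*l_1\in E(L')$, i.e.\ the ``$L$-part'' $l$ of an element $l'=lp$ is well defined modulo idempotents. This is precisely what makes the $L$-action on the coefficient algebra $B'$ constructed below unambiguous. For each character $\lambda$ occurring among the $\lambda_y$ I take one copy $B_\lambda$ of $B$ on which $E(L')$ acts through $\lambda$ (so $p$ acts as $0$ or $1$ according to $\lambda(p)$) and on which $L$ acts by combining the given $L$-action on $B$ with the permutation of the $\lambda_y$ coming from the $L'$-action on $X_{L'}$. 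Taking $n$ to be the number of minimal projections of $A$ (equivalently $|Y|$), I realize $B'$ as the $L'$-invariant subalgebra of $B^n=\bigoplus_{y}B_{\lambda_y}$ cut out by the compatibility relations forced by the partial bijections of $L'$ on $Y$; by the previous remark these relations are $L$-equivariant and consistent, so $B'$ is a genuine $L'$-algebra. Since the character set $\{\lambda_y\}$ depends only on $A$ and the defining relations are linear in the fibres, the functor $B\mapsto B'$ is additive and commutes with arbitrary direct sums.

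Next I would set up the bijection on Kasparov cycles. Given an $L$-cycle $(\cale,\phi,T)$ for $(\mbox{Res}_{L'}^L A,B)$, the central projections $\alpha_p(1)\in A$ are carried by $\phi$ to $L$-equivariant central projections $\phi(\alpha_p(1))\in\call(\cale)$; declaring $U_p:=\phi(\alpha_p(1))$ and combining with the $L$-action promotes $\cale$ to an $L'$-Hilbert module, while the same projections split the $B$-coefficients along the characters $\lambda_y$ so that the inner product becomes $B'$-valued. This yields an $L'$-cycle for $(A,B')$. Conversely, an $L'$-cycle for $(A,B')$ restricts (forgetting the idempotent action, keeping only $L$, and reassembling the fibres $B_\lambda$ into $B$) to an $L$-cycle for $(\mbox{Res}_{L'}^L A,B)$. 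I would then verify that the two assignments are mutually inverse, respect the grading and the Kasparov operator condition of Definition \ref{defCycle}, and are natural in the coefficient algebra; naturality immediately gives that homotopies $B[0,1]$ are sent to homotopies, so the bijection descends to an isomorphism $KK^{L}(\mbox{Res}_{L'}^L A,B)\cong KK^{L'}(A,B')$.

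The main obstacle I expect is checking that $L'$-equivariance of cycles $A\to B'$ corresponds \emph{exactly} to $L$-equivariance of cycles $\mbox{Res}_{L'}^L A\to B$, with no spurious relations appearing on either side. This is where $E$-unitarity is indispensable: it guarantees that the $L$-part of the $L'$-action on $B'$ is well defined and that the central projections $\phi(\alpha_p(1))$ interact with the $L$-action precisely as the idempotents of $L'$ act on $A$. Once this bookkeeping over the finite character set $\{\lambda_y\}$ is in place, the verification reduces orbit-by-orbit to the elementary case in which $L$ is trivial and a single projection $p<1$ splits $A=\alpha_p(1)A\oplus(1-\alpha_p(1))A$, and the remaining assertions (inverseness, homotopy invariance, compatibility with direct sums) are formal.
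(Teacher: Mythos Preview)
Your strategy is the same as the paper's: write every $l'\in L'$ as $lp$ with $l\in L$ and $p\in E(L')$, use $E$-unitarity to make the $L$-part well defined, push the idempotent decomposition of $A$ over to the coefficient side, and set $U_p:=\phi(\alpha_p(1))$ to promote an $L$-cycle to an $L'$-cycle. The paper carries out precisely this programme.

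Two points deserve tightening. First, your description of $B'$ as ``the $L'$-invariant subalgebra of $B^n$ cut out by compatibility relations'' is vaguer than what is actually needed. The paper indexes not by the points of $Y$ but by the minimal projections $p_{i,j}$ of $\alpha(E(\widetilde{L'}))$, grouped according to the minimal projections $q_i$ of $E(\tilde L)$; then $B'=\bigoplus_{i}\big(q_i(B)\big)^{n_i}$. The cut-down by $q_i$ is not optional: compatibility forces the $E(L)$-action on each summand to agree with the given action on $B$, so you cannot simply take full copies of $B$ with an artificial character action. Your ``compatibility relations'' must in particular impose $q_i(B_{\lambda_y})=B_{\lambda_y}$ whenever $\lambda_y$ lies over $q_i$, and you should say so.

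Second, and more substantively, you do not address what happens to the operator $T$. When you replace the $B$-valued inner product by the $B'$-valued one (splitting along the $p_{i,j}$), the algebra of compact operators changes; an operator that was a legitimate Kasparov operator for $(\cale,B)$ need not be one for $(\cale',B')$. The paper handles this explicitly: writing $T$ as a matrix with respect to the decomposition $\cale=\bigoplus_{i,j}\pi(p_{i,j}(1))\cale$, the off-diagonal entries are compact because $[T,\pi(p_{i,j}(1))]\in\calk(\cale)$, so one may pass to the diagonal part $T'$ by a compact perturbation without changing the class, and $T'$ is then visibly adapted to the new module structure. Your sentence ``verify that the two assignments \ldots\ respect the Kasparov operator condition'' hides exactly this step; it is the one place where something nontrivial about $T$ has to be checked, and it should be made explicit.
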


\begin{proof}
Note that $L'=\{l p \in L'|\, l \in L,\, p \in E(L')\}$. Similarly, writing $W:=\widetilde{L'}$,
$W = \{l p \in W|\, l \in L,\, p \in E(W) \}$.
%
Let $\alpha$ denote the $L'$-action on $A$ and $\beta$ the $L$-action on $B$.
Note that $A$ is of the form $\C^n =C_0(\{1,\ldots,n\})$ and so the $L'$-action can only cancel 
or permute the factors $\C$.
Consider the finite set $\alpha(E(W)) \subseteq \call(A)$ of projections,
which is already a refined set of projections,
and enumerate by $(p_{i,j})_{1 \le i \le m,1 \le j \le n_i}$ all their minimal projections,
where $p_i := \sum_{j=1}^{n_i} p_{i,j}$ denotes the minimal projections
of the smaller projection set $\alpha(E(\tilde L))$. 
Choose a selection (lift) $\sigma:\{p_{i,j}\} \rightarrow W$ such that $\alpha \circ\sigma = id$,
and write $q_{i,j}:= \sigma(p_{i,j})$ for simplicity.
Also, denote by $q_1,\ldots, q_m \in W$ the minimal projections of $E(\tilde L)$.

Let us be given a cycle $(\pi,\cale,T)$ in $KK^{L}(A,B)$. We want to mirror the $W$-structure
of the $A$-side to the $B$-side. By a well known cut-down of a cycle, we may assume without loss of generality
that $\pi(1) =1_{\call(\cale)}$. Denote the $L$-action on $\cale$ by $\gamma$.
Set $B_i:= \beta(q_i) B \subseteq B$ for $1 \le i \le m$. Note that $B
\cong B_1 \oplus \ldots \oplus B_m$.
(Also observe that $\cale$ has an analog, associated decomposition $\cale = \pi(p_1(1)) \cale \oplus \ldots \oplus \pi(p_m(1))\cale$
by $L$-equivariance of $\pi$.)
Define $B':=\bigoplus_{i=1}^m B_i^{n_i} = \bigoplus_{i=1}^m \bigoplus_{j=1}^{n_i} B_i$.
Denote these summands by $B_{i,j}$.
We want to define a cycle $(\pi',\cale',T')$ in $KK^{L'}(A,B')$.
Let $\cale'$ denote an identical copy of $\cale$ as a graded vector space.
We define a $B'$-valued inner product on $\cale'$ by
$$\langle \xi,\eta \rangle_{\cale'}:= \oplus_{i,j}
\big \langle \pi\big(p_{i,j}(1)\big)\xi, \pi\big(p_{i,j}(1)\big) \eta \big \rangle_{\cale} \quad \in \; B'= \oplus_{i,j} B_{i,j}$$
for all $\xi,\eta \in \cale'$, and the $B'$-module multiplication on $\cale'$ 
by $\xi (\oplus_{i,j} b_{i,j}) := \sum_{i,j} (\pi((p_{i,j}(1)))\xi)
b_{i,j}$ (the last $b_{i,j}$ regarded in $B_i$). Define a $L'$-action $\gamma'$ on $\cale'$ by $\gamma'(l
p) := \gamma(l) \pi \big (\alpha(p)(1)\big )$
for all $l \in L$ and $p \in E(L')$. 
Because $L'$ is $E$-unitary, the presentation $l p$ with $p \le l^* l$ is unique and thus $\gamma'$ well-defined.
Define a $W$-action $\beta'$ on $B'$ by $\beta'(l p)(\oplus_{i,j} b_{i,j}) = \oplus_{i,j} 1_{\{i=i_1\}} 1_{\{j=j_1\}}\beta(l)(b_{i_0,j_0})$
if $\alpha(p)=p_{i_0,j_0}$ and $\alpha(lp)$ has source projection $p_{i_0,j_0}$ and range projection
$p_{i_1,j_1}$.
%
%
%
We extend this definition to a $W$-action by additivity, that is, $\beta'(\sum_{i,j} \lambda_{i,j} l q_{i,j}) := \sum_{i,j} \lambda_{i,j} \beta'(l q_{i,j})$ for $l \in L$ and $\lambda_{i,j} \in \{0,1\}$.
%
%
%

Noting that $\sum_{i,j} \pi(p_{i,j}(1)) = 1_{\call(\cale)}$, we may write $T$ in matrix form
$(T_{(i,j),(i',j')})_{(i,j),(i',j')}$.
Since $[T,\pi(p_{i,j}(1))] \in \calk(\cale)$, all off-diagonal elements of $T$ are compact operators
and so by a compact perturbation we may replace $T$ by its diagonal matrix $T'$ (canceling the off-diagonal terms
of $T$)
without changing the cycle, that is, $[(\pi,\cale,T)]=[(\pi,\cale,T')]$.
Note that the identical map $\call(\cale) \cap \mbox{diagonal matrices} \rightarrow \call(\cale')$
is an isomorphism, which restricts to a bijection
$\calk(\cale) \cap \mbox{diagonal matrices} \rightarrow \calk(\cale')$ because
$\pi(p_{i,j}(1)) \cale \cong \pi(p_{i,j}(1)) \cale'$ 
for all $i,j$.
We set $\pi':= \pi$. The desired cycle in $KK^{L'}(A,B')$ is $(\pi',\cale',T')$.

Let us reversely be given a cycle $(\pi',\cale',T')$ in $KK^{L'}(A,B')$.
Define $\pi:=\pi'$, $T:= T'$ and $\cale$ an identical copy of $\cale'$ as a graded vector space.
(Note that $\cale \cong \oplus_{i,j} \pi(p_{i,j}(1)) \cale$ corresponding to $B'$ by $L'$-equivariance
of $\pi$.) 
Set
$$\langle \xi,\eta \rangle_{\cale} := \bigoplus_{i=1}^m \sum_{j=1}^{n_i} \big \langle \pi \big (p_{i,j}(1) \big) \xi, \pi \big (p_{i,j}(1) \big ) \eta \big \rangle_{\cale'} \quad \in \; B = B_1 \oplus \ldots \oplus B_m$$
for all $\xi,\eta \in \cale$, the $B$-module product on $\cale$ by 
$\xi (\oplus_i b_i) := \sum_i \sum_{j} \pi(p_{i,j}(1)) \xi b_i$ (the last $b_i$ regarded in $B_{i,j}$),
and the $L$-action on $\cale$ to be the restriction of the $L'$-action on $\cale'$.
It is easy to see that both constructed assignments $(\pi,\cale,T') \leftrightarrow (\pi',\cale',T')$
are reverses to each others.
The detailed, tedious verifications we left out in this proof are left to the reader.
\end{proof}

The next lemma deals with the question how to remove $\mbox{Res}_{G}^{pG}$.

\begin{lemma}   \label{lemmaKKpL}
Let $p \in G$ be a projection in the center.
Then $KK^{p G}(\mbox{Res}_{G}^{pG} \,A,\mbox{Res}_{G}^{pG} \, B) \cong KK^G(p A,p B) \cong KK^G(p A,B)
\cong KK^G(A, pB)$.
\end{lemma}

\begin{proof}
The first isomorphism is just the identity on cycles; a cycle $(\cale,T)$ in $KK^G(p A,p B)$
degenerates to $(p \cale,p T)$; a $pG$-action extends to a $G$ action by $g \mapsto pg$.
Also recall that $\mbox{Res}_G^{pG}(A) = p A$.
For the second isomorphism we decompose $B \cong p B \oplus (1-p) B$
and note that $KK^G(p A,(1-p)B) = 0$ since $p(a) \xi (1-p) (b) = 0$ for $a \in A,\xi \in \cale $ and  $b \in B$,
where $(\cale,T)$ is a cycle.
%
\end{proof}

The next lemma is similar to the fact that the $K$-theory group $KK(\C,B)=K(B)$ is countable.
It is immediately evidently true in $IK$-theory by
the Green--Julg isomorphism $IK^H(\C,A) \cong K(A \rtimes H)$ in \cite{burgiGreenjulg}.

\begin{lemma}  \label{lemmaCountableKKGC}
For all compact subinverse semigroups $H \subseteq G$
$KK^H(\mbox{Res}_G^H  \, \C,B)$ is countable for all $B \in KK^G$
and commutes with countable direct sums in the variable $B$.
%
%
\end{lemma}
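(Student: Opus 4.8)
The plan is to reduce both claims---countability and the commutation with countable direct sums---to the two standard facts that the topological $K$-theory of a separable $C^*$-algebra is a countable group and that $K$-theory carries countable direct sums into direct sums. The vehicle for this reduction is a Green--Julg isomorphism. In the incompatible theory everything is immediate: $IK^H(\C,A)\cong K(A\rtimes H)$, and since $H$ is finite the crossed product $A\rtimes H$ is separable and the functor $A\mapsto A\rtimes H$ commutes with countable direct sums, so both properties hold for $IK^H(\C,-)$ at once. The task is therefore to carry this over to the compatible theory $KK^H$ with $\mbox{Res}_G^H\C$ in the first variable.

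First I would pin down $\mbox{Res}_G^H\C$. As $\C$ carries the trivial $G$-action we have $\alpha_1-\alpha_e=0$ for every $e\in E$, so every projection of $\tilde G$ lying properly below the unit acts as $0$ on $\C$; consequently $\mbox{Res}_G^H\C=1_H(\C)$ is a finite-dimensional commutative $H$-algebra, say $C_0(Y)$ for a finite $H$-set $Y$. This finite-dimensionality is the structural reason the functor is ``compact''. Decomposing $Y$ into $H$-orbits gives a finite direct sum $C_0(Y)\cong\bigoplus_O \mbox{Ind}_{H_O}^H\C$ over the isotropy groupoids $H_O$, and by additivity of $KK^H$ in the first variable it suffices to treat a single induced summand $\mbox{Ind}_{H_O}^H\C$.

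Next I would reduce $KK^H(\mbox{Ind}_{H_O}^H\C,B)$ to a $K$-group. Since $H$ is finite it is automatically $E$-continuous, so the machinery of the previous sections is available, and I would use the comparison isomorphism $IK^H(A,B)\cong KK^H(A\rtimes E,B\rtimes E)$ of \cite[Theorem 5.3]{burgiGreenjulg} (with $\C\rtimes E\cong C_0(X_H)$) to pass between the compatible and incompatible pictures, together with Green imprimitivity $\mbox{Ind}_{H_O}^H(F)\,\widehat{\rtimes}\,H\cong F\,\widehat{\rtimes}\,H_O$ from \cite{burgiGreen} and the Green--Julg isomorphism. This expresses the group as the $K$-theory of a separable crossed product built functorially from $B$ by restriction to the clopen piece of $Y$ and crossing by the finite groupoid $H_O$. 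Each of these operations---restriction to a corner, crossed product by a finite groupoid, the passage $B\mapsto B\rtimes E$, and finally $K$-theory---preserves countability and commutes with countable direct sums, so the two assertions follow.

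The main obstacle is that the obvious shortcut is unavailable: one would like to write $KK^H(\mbox{Ind}_{H_O}^H\C,B)\cong KK^{H_O}(\C,\mbox{Res}_G^{H_O}B)$ and then apply Green--Julg, but by Remark \ref{remarkAdjointfunctorIndRes} this induction--restriction adjunction is false already for finite inverse semigroups. Hence the reduction to $K$-theory cannot be routed through a $KK$-adjunction and must instead be obtained from Green imprimitivity, which is a statement about crossed products and does remain valid, combined with Green--Julg; the delicate point is the careful bookkeeping between the compatible and incompatible conventions and the correct identification of the separable crossed product of $B$ that appears.
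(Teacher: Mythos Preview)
Your overall strategy---reduce to $K$-theory via Green--Julg---matches the paper's, but the execution has a gap, and the paper closes it by a different and simpler device than the one you propose.

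First, $\mbox{Res}_G^H\C$ is just $\C$ with the trivial $H$-action: since $H$ is a subinverse semigroup containing the unit there is no cut-down, and every $e\in E$ acts as the identity on $\C$. So your finite $H$-set $Y$ is a single point, your orbit decomposition is trivial, and you are back to computing $KK^H(\C,B)$ directly.

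Now the gap. You correctly observe that the adjunction (\ref{indresadjointidentity}) is unavailable, and you propose to replace it by the comparison $IK^H(A,B)\cong KK^H(A\rtimes E,B\rtimes E)$ together with Green imprimitivity on crossed products. But none of these ingredients produces an identification of $KK^H(\C,B)$ with a $K$-group: the comparison isomorphism goes the wrong way (to use it you would need $\C$ and $B$ to be of the form $-\rtimes E$, which $B$ need not be), and Green imprimitivity is a statement about $\widehat\rtimes\,H$, not about $KK^H$-groups. Your sentence ``This expresses the group as the $K$-theory of a separable crossed product'' is exactly the step that is not justified.

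The paper's trick avoids this entirely. One writes down explicit $H$-equivariant $*$-homomorphisms $f:\C\to C_0(X_H)$, $f(1)=1_e$ for $e$ the minimal projection of $E(H)$, and $p:C_0(X_H)\to\C$, $p(1_e)=1$, with $p\circ f=\mathrm{id}$. Thus $KK^H(\C,B)$ is a \emph{retract} of $KK^H(C_0(X_H),B)$. For the latter there \emph{is} a Green--Julg isomorphism in the compatible theory, namely the groupoid version
\[
KK^H\big(C_0(X_H),\mbox{Res}_G^H B\big)\;\cong\;K\big(\mbox{Res}_G^H(B)\,\widehat\rtimes\,H\big),
\]
because $C_0(X_H)$ is the unit object in the groupoid picture. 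Countability and commutation with countable direct sums hold for the right-hand side by the standard $K$-theory facts, and both properties are inherited by retracts (the paper records the direct-sum case via a small commuting square with $f^*$ and $p^*$). The key idea you are missing is this retract-of-$C_0(X_H)$ argument; once you have it, no orbit decomposition, no $IK/KK$ comparison, and no Green imprimitivity is needed.
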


\begin{proof}
%
%
Let $f: \C \rightarrow C_0(X_H)$ be the map $f(1)= 1_e$, where $e$ denotes the minimal projection in
$E(H)$, so is also in $X_H$. Reversely, let $p: C_0(X_H) \rightarrow \C$ be the projection $p(1_e) = 1$.
Both $f$ and $p$ are $G$-equivariant $*$-homomorphisms, because $g(1_e) = 1_{g e g^*} = 1_e$ since
$g e g^*$ is both in $X_H$ and in $E(H)$, so must be $e$ again.
The map $f^*: KK^H(C_0(X_H),B) \rightarrow KK^H(\C,B)$ is surjective and $p^*$ is injective
because
$f^* p^* = (p f)^* = id$.
Hence, noting that the $K$-theory of a separable $C^*$-algebra is countable, $KK^H(\mbox{Res}_G^H  \, \C,B)$ is countable since it is the image of $f^*$ of the countable abelian group
\begin{eqnarray}   \label{greenj}
KK^H (C_0(X_H), \mbox{Res}_G^{H}\,B) & \cong &  K \big(\mbox{Res}_G^{H}(B) \widehat \rtimes H \big),
\end{eqnarray}
where this is essentially the Green--Julg isomorphism for groupoids, see Tu \cite[Proposition 6.25]{tu1999nov},
or directly apply \cite[Corollary 5.4]{burgiKKrDiscrete}.
%
%
Both diagrams
\begin{equation}   \label{commdiagram}
\xymatrix{
\bigoplus_i KK^H(C_0(X_H), B_i) \ar[r]
\ar[d]^{\bigoplus_i f^*}  &  KK^H \big(C_0(X_H),\bigoplus_i B_i \big)
 \ar[d]^{f^*} \\
\bigoplus_i KK^H(\C, B_i) \ar[u]^{\oplus_i p^*}  \ar[r]
 &  KK^H \big(\C,\bigoplus_i B_i \big)
\ar[u]^{p^*}
}
\end{equation}
commute (one with $f^*$ and another with $p^*$) and because the first line is an isomorphism because of (\ref{greenj}) ($K$-theory respects direct sums),
the second line is also one.
%
\end{proof}



\section{Some motivating specialized results}

\label{sectionMotivationDiracmorphism}


In this section we shall prove some specialized results with induction and restriction on the way for
proving the existence of a Dirac morphism.
In this section we shall work with the restriction functor, and only in the next section with the correct right adjoint functor
for induction, and hence this section will lead to a dead end.
Nevertheless the results of this section will 
motivate the assumptions met in the next section for hypothetically
proving the existence of a Dirac morphism and a Baum--Connes map.
 
%



\begin{definition}
{\rm
Set
$$\calc \cali_1:= \{\mbox{Ind}_{H_n}^G \mbox{Res}_G^{H_n}
\ldots \mbox{Ind}_{H_1}^G \mbox{Res}_G^{H_1} (\C) \,| \, H_i \subseteq G \,\mbox{compact subinverse s.},
\, n \ge 1\}.$$
}
\end{definition}

Considering for example an object in $\calc \cali_1$ for $n=3$, we may write it as
\begin{eqnarray}    \label{successiveInd1}
\mbox{Ind}_{H_3}^G \mbox{Res}_{G}^{H_3} \mbox{Ind}_{H_2}^G \mbox{Ind}_{H_1}^G\, \C &=&
\mbox{Ind}_{H_3}^G \bigoplus_{g \in J} \mbox{Res}_{L_g'}^{H_3} \mbox{Ind}_{M_g}^{L_g'} \mbox{Res}_{G}^{M_g} \mbox{Ind}_{H_1}^G\, \C
\end{eqnarray}
by an application of Lemma \ref{lemmaKKLGsplit}.
Go back to Lemma \ref{lemmatechnicalDirac} and
define $V \subseteq G$ to be the finite subinverse semigroup $g_0 U'' g_0^*$,
where $U'' \subseteq U'$ denotes the finite subinverse semigroup consisting of those
elements $u \in U'$ such that $u$ commutes with $u_0$ and $g u g^* \in M \cup \{0\}$.
Note that $M \subseteq \widetilde{V}$ since $E(U') \subseteq U''$ and so $g_0 u_0 g_0^* \in \widetilde{V}$.
Observe that 
$g g^*$ is in the center of $\tilde V$ and $\tilde V g g^* = V g g^* = M \cup \{0\}$.
Write $V_g$ for the $V$ of $M_g$.
Continue (\ref{successiveInd1}) with
\begin{eqnarray}
&=&
\bigoplus_{g \in J} \mbox{Ind}_{H_3}^G \mbox{Res}_{L_g'}^{H_3} \mbox{Ind}_{M_g}^{L_g'} \mbox{Res}_{V_g}^{M_g}
\mbox{Res}_{G}^{V_g} \mbox{Ind}_{H_1}^G \mbox{Res}_{H_1}^{G} \, \C  \label{successiveInd2} \\
&=&\bigoplus_{g \in J} \bigoplus_{h \in J_g} \mbox{Ind}_{H_3}^G  \mbox{Res}_{L_g'}^{H_3} \mbox{Ind}_{M_g}^{L_g'} \mbox{Res}_{V_g}^{M_g} \mbox{Res}_{L_{g,h}'}^{V_g} \mbox{Ind}_{M_{g,h}}^{L_{g,h}'} \mbox{Res}_{G}^{M_{g,h}}\, \C  \label{successiveInd}
\end{eqnarray}
by another application of Lemma \ref{lemmaKKLGsplit}.

 Note that every summand in
(\ref{successiveInd}) is of the form $\mbox{Ind}_{H_3}^G\, A$ for some finite dimensional, commutative $H_3$-algebra
$A$. (Because $(L')_M$ is finite by (\ref{eqlsm}).)
Similarly, by a successive $n$-fold application of Lemma \ref{lemmaKKLGsplit} write
$\mbox{Ind}_{H_n}^G  \ldots \mbox{Ind}_{H_1}^G \, \C$ as a countable
direct sum of $G$-algebras of the form $\mbox{Ind}_{H_n}^G \, A$ for some finite dimensional, commutative $H_{n}$-algebras $A$.

\begin{definition}		\label{definitionCI0}
{\rm
Varying over all $n \ge 1$ and $H_1,\ldots,H_{n} \subseteq G$, denote by $\calc \cali_0$ the countable collection
of all $G$-algebras of the form $\mbox{Ind}_{H_{n}}^G\, A$ as just described.
}
\end{definition}

From here we shall assume that $G$ is $E$-continuous, for $KK^G$ to be a triangulated
category in the sense of Proposition \ref{propositiontriangulatedcategory}.

\begin{definition}   \label{defLocSubcat}
{\rm
A subcategory $\cals$ of a triangulated category $\calt$ is called a {\em triangulated subcategory}
(see \cite[Section 4.5]{krause})
if it is nonempty, full, closed under suspension and desuspension, and, whenever for a given exact sequence
$A \rightarrow B \rightarrow C \rightarrow SC$ two objects of $\{A,B,C\}$ are in $\cals$ then also
the third one.
$\cals$ is also called {\em thick} (see \cite[Section 4.5]{krause}) if every retract (summand) of an object
in $\cals$ is also in $\cals$,
and {\em localizing} (see \cite[Section 6.2]{krause}) if it is thick
and closed under coproducts in $\calt$.
}
\end{definition}

\begin{definition}  \label{defGeneratedLocCat}
{\rm
For a class $\calg$ of objects in $\calt$ we write $\langle \calg \rangle$ for the smallest localizing
subcategory of $\calt$ containing $\calg$, cf. \cite[Section 2.5]{meyernest}.
}
\end{definition}

Note that in $KK^G$ coproducts are direct sums, and we only allow {\em countable} direct sums.
In the next definition we {\em assume} that the used results by A. Neeman hold true under this
countability restriction for coproducts (we have not checked this).

\begin{definition}[Conjecture]    \label{defRightadjointtoInd}
{\rm
Suppose that $G$ is $E$-continuous.
Fix a compact subinverse semigroup $H \subseteq G$.
Let $\calf_H$ denote the set of all finite dimensional, commutative
$H$-algebras which are compact objects of the category $KK^H$ in the sense of \cite[Definition 1.6]{neeman1996}.
(For instance, $\C \in \calf_H$ by Lemma \ref{lemmaCountableKKGC}.)
The set $\Sigma \calf_H \cup \calf_H$ is closed under suspension by Bott periodicity and consists of compact objects.
By \cite[Proposition 8.4.1]{neemanbook} it is a generating set
for $\langle \calf_H \rangle$.
Hence $\langle \calf_H \rangle$ 
%
is a compactly generated triangulated category
in the sense of \cite[Definition 1.7]{neeman1996}.
By Lemma \ref{lemmaInductionfunctorIntertwines} and \cite[Theorem 4.1]{neeman1996},
the restricted induction functor $\mbox{Ind}_{H}^L:\langle \calf_H\rangle \rightarrow KK^L$ has a right
adjoint functor $\mbox{Right}_L^H:KK^L \rightarrow \langle \calf_H\rangle$ for every subinverse semigroup $L \subseteq G$.

}
\end{definition}


\begin{corollary}   \label{corollaryCountableKKGI0}
Assume that $G$ is $E$-unitary, $E$-continuous and the functors $\mbox{Right}_H^L$ respect countable direct sums.
(In the worst case scenario, if $G$ is a group.)
Then for all $A \in \calc \cali_0$ $KK^G(A,B)$ is countable for all $B \in KK^G$
and commutes with countable direct sums in the variable $B$.
%
%
\end{corollary}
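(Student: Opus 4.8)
The plan is to trade the induction functor on the left of $KK^G(A,-)$ for the right adjoint $\mbox{Right}$ of Definition~\ref{defRightadjointtoInd}, and then to strip the finite dimensional commutative coefficient down to scalars, at which point countability and exactness are delivered by the Green--Julg isomorphism of Lemma~\ref{lemmaCountableKKGC}. Write a given $A\in\calc\cali_0$ as $A=\mbox{Ind}_{H_n}^G A'$ with $A'$ a finite dimensional, commutative $H_n$-algebra. The first point to settle is that $A'$ is a compact object of $KK^{H_n}$, i.e. $A'\in\calf_{H_n}$: decomposing $A'$ over the orbits of the $H_n$-action on its (finite) spectrum realizes it, up to isomorphism in $C^*_{H_n}$, as a finite direct sum $\bigoplus_i \mbox{Ind}_{K_i}^{H_n}\C$ of algebras induced from scalars over the isotropy subgroupoids $K_i\subseteq H_n$; since $\mbox{Right}_{K_i}^{H_n}$ preserves countable coproducts by hypothesis, its left adjoint $\mbox{Ind}_{K_i}^{H_n}$ preserves compact objects, so each $\mbox{Ind}_{K_i}^{H_n}\C$ is compact (as $\C\in\calf_{K_i}$ by Lemma~\ref{lemmaCountableKKGC}), and a finite direct sum of compacts is compact. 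With $A'\in\langle\calf_{H_n}\rangle$ the adjunction applies and yields a natural isomorphism
\[
KK^G(\mbox{Ind}_{H_n}^G A',B)\;\cong\;KK^{H_n}\big(A',\,\mbox{Right}_{H_n}^G B\big).
\]

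The direct sum assertion then follows formally: $A'$ being compact, $KK^{H_n}(A',-)$ commutes with countable coproducts in $KK^{H_n}$, and since $\mbox{Right}_{H_n}^G$ preserves them by hypothesis, the composite $KK^G(A,-)$ commutes with countable direct sums in the variable $B$.

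For countability, which is the substantive part and cannot be extracted from the adjunction itself without circularity, I would reduce the coefficient $A'$ to scalars. Applying the same orbit decomposition together with the $H_n$-level adjunction once more gives, for $D:=\mbox{Right}_{H_n}^G B$ a separable $H_n$-algebra,
\[
KK^{H_n}(A',D)\;\cong\;\bigoplus_i KK^{H_n}\big(\mbox{Ind}_{K_i}^{H_n}\C,D\big)\;\cong\;\bigoplus_i KK^{K_i}\big(\C,\,\mbox{Right}_{K_i}^{H_n}D\big),
\]
a finite sum in which each $\mbox{Right}_{K_i}^{H_n}D$ is again a separable $K_i$-algebra. In the inverse semigroup setting this decomposition is exactly what Lemmas~\ref{lemmaIsoLLprime} and~\ref{lemmaKKpL} encode: Lemma~\ref{lemmaKKpL} isolates the corner cut out by a central projection, while Lemma~\ref{lemmaIsoLLprime} promotes the action to the oversemigroup generated by $L$ and the relevant projections at the cost of replacing the coefficient by a subalgebra of a finite power; this oversemigroup is $E$-unitary because $G$ is, which is the role of that hypothesis. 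Finally each summand $KK^{K_i}(\C,\mbox{Right}_{K_i}^{H_n}D)$ is countable by Lemma~\ref{lemmaCountableKKGC} applied with $K_i$ as ambient semigroup, the $K$-theory of a separable $C^*$-algebra being countable; a finite sum of countable groups is countable.

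The main obstacle I expect lies in the status of the coefficient after the adjunction. The object $\mbox{Right}_{H_n}^G B$ is only an $H_n$-algebra, not the restriction of a $G$-algebra, so the base-case countability of Lemma~\ref{lemmaCountableKKGC} must be invoked at the level of the smaller semigroups $K_i$; one has to verify that the splitting $f,p$ and the Green--Julg identification go through there, which amounts to choosing a minimal projection of $E(K_i)$ that is $K_i$-invariant. A secondary difficulty is making the orbit decomposition, and the resulting compactness of every finite dimensional commutative $H_n$-algebra, precise for inverse semigroups rather than groups, and checking that the formal principle that a left adjoint preserves compactness when its right adjoint preserves coproducts is genuinely available in the triangulated categories $\langle\calf_{K_i}\rangle\subseteq KK^{H_n}$ at hand.
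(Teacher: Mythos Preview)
Your strategy and the paper's diverge at the point where countability has to be established. You take the ``endpoint'' description $A=\mbox{Ind}_{H_n}^G A'$ from Definition~\ref{definitionCI0}, apply the adjunction once, and then try to prove directly that $KK^{H_n}(A',-)$ is countable by orbit-decomposing $A'$ as $\bigoplus_i \mbox{Ind}_{K_i}^{H_n}\C$ and applying a second layer of adjunctions. The paper instead never attempts to show that an \emph{arbitrary} finite dimensional commutative $H_n$-algebra is compact; see Remark~\ref{remarkCI0}, where exactly your shortcut is called ``natural'' but flagged as nontrivial. Instead the paper exploits that each $A\in\calc\cali_0$ is not just some $\mbox{Ind}_{H_n}^G A'$ but a specific summand of the nested expression (\ref{successiveInd}), and walks inductively from the innermost $\mbox{Res}_G^{M_{g,h}}\C$ outwards: each $\mbox{Ind}$ is traded for a $\mbox{Right}$, and each $\mbox{Res}$ is handled by Lemma~\ref{lemmaIsoLLprime} (for the steps of the shape $\mbox{Res}_{L'}^{L}$, using the $E$-unitary hypothesis) or Lemma~\ref{lemmaKKpL} (for the central-projection steps $\mbox{Res}_{V_g}^{M_g}$). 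So those two lemmas are not, as you suggest, an encoding of your orbit decomposition of $A'$; they are the tools that let the paper pass countability through the \emph{restriction} layers of the chain, which your one-shot adjunction never sees.

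Your route is cleaner if it can be made to work, but it needs ingredients the paper does not provide: an induction functor $\mbox{Ind}_{K_i}^{H_n}$ between nested finite groupoids fitting Definition~\ref{defInductionAlg} (there the target is always the ambient $G$, not another finite groupoid), the corresponding right adjoints $\mbox{Right}_{K_i}^{H_n}$ of Definition~\ref{defRightadjointtoInd} (stated only for $L\subseteq G$ a subinverse semigroup), and the identification $A'\cong\bigoplus_i \mbox{Ind}_{K_i}^{H_n}\C$ inside $KK^{H_n}$. None of this is unreasonable for finite groupoids, and you correctly flag these as the obstacles; but the paper's chain argument was designed precisely to avoid setting up that extra machinery.
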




\begin{remark}   \label{remarkCI0}
{\rm
It appears natural that $KK^H(A,B)$ is countable and commutes with countable direct sums in $B$
for all finite subinverse semigroups $H \subseteq G$ and finite-dimensional, commutative $H$-algebras $A$.
(The K\"unneth theorem comes into mind, but is difficult even for $G = \Z/2$, see Rosenberg \cite{rosenberg2013}.)
But then the claim of Corollary \ref{corollaryCountableKKGI0} would follow alone from Definition \ref{definitionCI0}
and the assumption that $\mbox{Right}$ respects countable direct sums.
}
\end{remark}


\begin{proof}[Proof of Corollary \ref{corollaryCountableKKGI0}]
To demonstrate the proof of Corollary \ref{corollaryCountableKKGI0}, assume $A$ is one of the summands of (\ref{successiveInd}).
We go inductively from right to left in (\ref{successiveInd}).
The first algebra $A_{1}:= \mbox{Res}_{G}^{M_{g,h}}\, \C $
of (\ref{successiveInd}) satisfies the claim of Corollary \ref{corollaryCountableKKGI0} when replacing $A$ by $A_1$ by Lemma \ref{lemmaCountableKKGC}.
The next algebra $A_2:=\mbox{Ind}_{M_{g,h}}^{L_{g,h}'}\, A_1$ satisfies the claim of Corollary \ref{corollaryCountableKKGI0} because
now evidently $A_1 \in \calf_{M_{g,h}}$ and we assume that $\mbox{Right}^{M_{g,h}}_{L_{g,h}'}$
respects countable direct sums, whence $A_2$ satisfies the claim by
putting $\mbox{Ind}$ to the other side as $\mbox{Right}$,
cf. \cite[Theorem 5.1]{neeman1996}.
%
Going back how we deduced identity (\ref{successiveInd}) from Lemma \ref{lemmaKKLGsplit}, a check shows that both expressions
$\mbox{Res}_{L_g'}^{H_3}$ and $\mbox{Res}_{L_{g,h}'}^{V_g}$ of (\ref{successiveInd}) are of the form
$\mbox{Res}_{L'}^{L}$, where $L'$ and $L$ are the notions from Lemma \ref{lemmatechnicalDirac} and
additionally $L$ is finite.
%
But from Lemma \ref{lemmaIsoLLprime} we know that
\begin{equation}   \label{isoLLprimeAstart}
KK^L(\mbox{Res}_{L'}^{L} \,A_{2},B) \cong KK^{L'}(A_{2}, B').
\end{equation}
Since $A_{2}$ satisfies the assumption, $\mbox{Res}_{L'}^{L} A_{2} = \mbox{Res}_{L_{g,h}'}^{V_g} A_{2} =:A_{3}$
does it also because of (\ref{isoLLprimeAstart}).
%
Recall that $g g^*$ is in the center of $\widetilde{V_g}$ and $\widetilde{V_g} g g^* = M_g \cup \{0\}$.
(See before (\ref{successiveInd2}).)
%
Consequently 
we have
\begin{equation}   \label{eqMgpL}
KK^{M_g} (\mbox{Res}_{{V_g}}^{M_g} \, A_3, \mbox{Res}_{{V_g}}^{M_g} \, B) \cong KK^{{V_g}}(A_3, g g^* B)
\end{equation}
for every $V_g$-algebra $B$ by Lemma \ref{lemmaKKpL} and (\ref{equKKtildeG}). 
Hence, since $A_{3}$ satisfies the assumption,
the algebra 
$A_4:=\mbox{Res}_{V_g}^{M_g} \, A_3$ appearing in (\ref{successiveInd})
does it also by (\ref{eqMgpL}).
Successively we proceed in the same vein for the final three expressions
$\mbox{Ind}_{H_3}^G$, $\mbox{Res}_{L_g'}^{H_3}$ and $\mbox{Ind}_{M_g}^{L_g'}$
in (\ref{successiveInd}) until the assumption is verified for $A$.
The proof for arbitrary $A \in \calc \cali_0$ is analog.
%
%
%
%
%
\end{proof}



\section{The Baum--Connes map}

\label{sectionDirac}

In this section we shall switch from the restriction functors to the $\mbox{Right}$-functors
of Definition \ref{defRightadjointtoInd}.
We shall prove the existence of simplicial approximations and even a Dirac morphism
and a Baum--Connes map for all coefficient algebras under some unverified assumptions
which are motivated by the last section.
Excepting Definition \ref{defRightadjointtoInd},
Sections \ref{sectionLemmasResInd} and \ref{sectionMotivationDiracmorphism} will not be needed in this section beside motivation.
Because of Proposition \ref{propositiontriangulatedcategory} it is assumed that $G$ is $E$-continous.

Let $\calf_H$ be the set of Definition \ref{defRightadjointtoInd} or any other countable set of compact objects of $KK^H$;
by Definition \ref{defRightadjointtoInd} there exists a right adjoint functor $\mbox{Right}_G^H$ for $\mbox{Ind}_H^G$.

We are going to introduce analogous sets to $\calc \cali_1$ and $\calc \cali_0$ by replacing the $\mbox{Res}$-functors by the $\mbox{Right}$-functors.
As a motivation observe that in Corollary \ref{corollaryCountableKKGI0} we proved that every element of $\calc \cali_0$
is of the form $\mbox{Ind}_H^G(A)$ with $A \in \calf_H$.

\begin{definition}      \label{defcj}
{\rm
Let us be given a $G$-algebra $Z$.
Set
$$\calc \calj_1:= \{\mbox{Ind}_{H_n}^G \mbox{Right}_G^{H_n}
\ldots \mbox{Ind}_{H_1}^G \mbox{Right}_G^{H_1} (Z) \,| \, H_i \subseteq G \,\mbox{comp. sub. s.},
\, n \ge 1\}$$
and $\calc \calj_0$ the countable set of objects of the form $\mbox{Ind}_{H}^G A$,
where $H$ is a finite subinverse semigroup of $G$ and $A \in \calf_{H}$.
}
\end{definition}

\if 0
\begin{definition}
{\rm
Write $\calc \calj_0$ for the countable set of objects of the form $\mbox{Ind}_{H}^G A$,
where $H$ is a finite subinverse semigroup of $G$ and $A \in \calf_{H}$.
Set
$$\calc \calj_1:= \{\mbox{Ind}_{H}^G \mbox{Right}_G^{H}
(B) \,| \, H \subseteq G \,\mbox{comp. sub. semigroup}, \, B \in \calc \calj_0 \}.$$
}
\end{definition}
\fi

The following corollary is a slight modification of Brown's representability theorem.


\begin{corollary}[Cf. Lemma 6.3 of \cite{meyernest}]   \label{corollaryBrownrepres}
%
%
Assume that for all $A \in \calc \calj_0$ $KK^G(A,B)$ is countable for all $B \in KK^G$
and commutes with countable direct sums in the variable $B$.
Then
for any object $B$ in $KK^G$ there exist an object $\tilde B$ in $\langle \calc \calj_0 \rangle$
and a morphism $f \in KK^G(\tilde B,B)$ such that $f_*:KK^G(A,\tilde B) \rightarrow KK^G(A,B)$
($f_*(x):= f \circ x$ for $x \in KK^G(A,\tilde B)$) is an isomorphism for all objects $A$ in $\langle \calc \calj_0 \rangle$.
\end{corollary}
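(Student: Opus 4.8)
The plan is to build $\tilde B$ as a homotopy colimit of an increasing sequence of cellular approximations $\tilde B_0 \to \tilde B_1 \to \cdots$ in $\langle \calc\cali_0 \rangle$, each equipped with a compatible morphism to $B$, following the countable form of Neeman's Brown representability argument \cite{neeman1996} used in \cite[Lemma 6.3]{meyernest}. I would arrange that, on every generator $A \in \calc\cali_0$, the induced map $KK^G(A,\tilde B_n) \to KK^G(A,B)$ is surjective already for $n=0$ and that its kernel is progressively annihilated as $n$ grows, so that in the colimit the comparison map becomes an isomorphism. The crucial structural fact, guaranteed by the standing hypothesis (the claim of Corollary \ref{corollaryCountableKKGI0}), is that every index set occurring below is countable; hence all coproducts stay inside $\langle \calc\cali_0 \rangle$, which by Definition \ref{defLocSubcat} is full, closed under exact triangles, and closed under the (countable) direct sums that $KK^G$ admits.

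First I would produce a surjective approximation. Put $\tilde B_0 := \bigoplus_{A \in \calc\cali_0} \bigoplus_{x \in KK^G(A,B)} A$ and let $f_0:\tilde B_0 \to B$ be the morphism whose restriction to the summand indexed by $(A,x)$ is $x$. Since $\calc\cali_0$ is countable and each $KK^G(A,B)$ is countable by Corollary \ref{corollaryCountableKKGI0}, the index set is countable, so $\tilde B_0 \in \langle \calc\cali_0 \rangle$, and by construction $(f_0)_*:KK^G(A,\tilde B_0) \to KK^G(A,B)$ is surjective for every $A \in \calc\cali_0$. I would then iterate: given $f_n:\tilde B_n \to B$, form $\tilde K_n := \bigoplus A$ over all pairs $(A,a)$ with $A \in \calc\cali_0$ and $a \in \ker\big((f_n)_*:KK^G(A,\tilde B_n)\to KK^G(A,B)\big)$, again a countable sum by Corollary \ref{corollaryCountableKKGI0}, with the tautological morphism $g_n:\tilde K_n \to \tilde B_n$. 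Define $\tilde B_{n+1}$ through the exact triangle $\tilde K_n \xrightarrow{g_n} \tilde B_n \xrightarrow{\pi_n} \tilde B_{n+1} \to \Sigma \tilde K_n$ (a mapping cone in the sense of Definition \ref{defMappingConeTriangle}). As $f_n\circ g_n = 0$, the morphism $f_n$ extends along $\pi_n$ to $f_{n+1}:\tilde B_{n+1}\to B$ with $f_{n+1}\circ\pi_n = f_n$, and each $\tilde B_n$ lies in $\langle \calc\cali_0 \rangle$; surjectivity on generators is preserved since $(f_{n+1})_*\circ(\pi_n)_* = (f_n)_*$.

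Next I would set $\tilde B := \mbox{hocolim}_n \tilde B_n$, defined by the exact triangle $\bigoplus_n \tilde B_n \xrightarrow{\,1-\mbox{shift}\,} \bigoplus_n \tilde B_n \xrightarrow{\iota} \tilde B \to \Sigma \bigoplus_n \tilde B_n$, so that $\tilde B \in \langle \calc\cali_0 \rangle$; the relation $f_{n+1}\circ\pi_n=f_n$ gives $(f_n)\circ(1-\mbox{shift})=0$, whence the $f_n$ assemble to a morphism $f:\tilde B \to B$ with $f\circ\iota=(f_n)$. The key computation is that, because each $A \in \calc\cali_0$ is compact in the sense that $KK^G(A,-)$ commutes with countable direct sums (Corollary \ref{corollaryCountableKKGI0}), applying $KK^G(A,-)$ to the telescope triangle yields $KK^G(A,\tilde B) \cong \varinjlim_n KK^G(A,\tilde B_n)$. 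Surjectivity then passes to the colimit, while any kernel element at stage $n$ maps to zero in $\tilde B_{n+1}$ because $\pi_n\circ g_n=0$; hence $f_*:KK^G(A,\tilde B)\to KK^G(A,B)$ is an isomorphism for all $A \in \calc\cali_0$, and likewise for all suspensions $\Sigma^{\pm 1}A$. Finally I would upgrade from the generators to all of $\langle \calc\cali_0 \rangle$: the class of objects $A$ for which $f_*$ is an isomorphism is closed under suspension, under exact triangles (five lemma applied to the long exact sequences coming from $KK^G(-,\tilde B)$ and $KK^G(-,B)$), and under countable direct sums (since $KK^G(\bigoplus_i A_i,-)=\prod_i KK^G(A_i,-)$ and a product of isomorphisms is an isomorphism), so it is a localizing subcategory containing $\calc\cali_0$ and therefore all of $\langle \calc\cali_0 \rangle$.

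The construction is entirely formal once the standing hypothesis is in force, so I do not expect a deep obstacle beyond careful bookkeeping. The one point that genuinely requires the hypothesis — and which I regard as the crux — is the interchange of $KK^G(A,-)$ with the countable direct sums in both the telescope and the kernel steps: this compactness of the objects of $\calc\cali_0$ is exactly the content of Corollary \ref{corollaryCountableKKGI0}, and it is also what keeps every coproduct countable and hence inside $\langle \calc\cali_0 \rangle$. Without it the homotopy colimit would fail to detect the generators correctly and the argument would break down, which is precisely why that corollary appears here as an explicit assumption.
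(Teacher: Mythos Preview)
Your proposal is correct and follows essentially the same approach as the paper: both invoke the countable version of Brown representability for triangulated categories due to Neeman, with the generating set $\calc\cali_0$ and the compactness/countability input coming from Corollary~\ref{corollaryCountableKKGI0}. The only difference is packaging---the paper cites \cite[Theorem 6.1]{meyernest} as a black box, while you unfold its proof via the cellular tower and homotopy colimit; one small point you gloss over is that the index sets for $\tilde B_0$ and $\tilde K_n$ should also run over $\Sigma A$ (Bott periodicity makes this a single extra copy) so that the isomorphism on suspensions is built in rather than asserted after the fact.
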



\begin{definition}[Cf. Definition 4.1 of \cite{meyernest}]   \label{defCompactlyinduced}
{\rm
An object $A$ in $KK^G$ is called {\em compactly induced} 
if there exists an object $B$ in $KK^G$ and a compact subinverse semigroup $H \subseteq G$
such that $A$ is isomorphic to $\mbox{Ind}_H^G(B)$ in $KK^G$.
The full subcategory of $KK^G$ of compactly induced objects is denoted by $\calc \calj$.
}
\end{definition}


%


\if 0
\begin{definition}[Cf. Definition 4.5 of \cite{meyernest}]  \label{defDiracMorphism}
{\rm
A {\em Dirac morphism} 
is an element
$f \in KK^G(B,\C)$ for some object $B$ in $\langle \calc \calj \rangle$
such that $\mbox{Right}_G^H(f)$ is invertible in $KK^H$ for all compact subinverse semigroups
$H$ of $G$.
}
\end{definition}
\fi

\begin{definition}[Cf. Definition 4.5 of \cite{meyernest}]  \label{defDiracMorphism}
{\rm
Let $Z$ be a $G$-algebra.
A {\em $\calc \calj$-simplicial approximation} for $Z$ 
is an element
$f \in KK^G(B,Z)$ for some object $B$ in $\langle \calc \calj \rangle$
such that $\mbox{Right}_G^H(f)$ is invertible in $KK^H$ for all compact subinverse semigroups
$H$ of $G$.
If $Z=C_0(X)$ we particularly call $f$ a {\em Dirac morphism}.
}
\end{definition}

\if 0
To demonstrate how we wanted to define the Dirac isomorphism, we present this final proposition of this section
for groups $G$. 
We cannot prove it for inverse semigroups as the restriction functor is not left adjoint to the induction functor.
We can also not use
the functor $\mbox{Right}$ from Definition \ref{defRightadjointtoInd}, as we shall need lemmas from Section
\ref{sectionLemmasResInd}, which also entered the definition of $\calc \cali_0$.
\fi

As a motivation for the next proposition recall Remark \ref{remarkCI0} and notice that we have shown in identity
(\ref{successiveInd}) that every element of $\calc \cali_1$ is the countable
sum of elements in $\calc \cali_0$.

\begin{proposition}[Cf. Proposition 4.6 of \cite{meyernest}]     \label{propDiracMorphism}
%
%
Suppose that $G$ is $E$-continuous.
Assume that $KK^H(A,B)$ is countable and commutes with countable direct sums in $B$
for all finite subinverse semigroups $H$ of $G$ and $A \in \calf_H$ (see Definition \ref{defRightadjointtoInd}).
Assume that 
the $\mbox{Right}$-functors 
commute with countable direct sums.
%
%
Let $Z$ be a $G$-algebra.
Suppose that every object of $\calc \calj_1$ can be expressed 
as a countable direct sum of objects
of $\calc \calj_0$ up to $KK^G$-equivalence.
%
%
%
%
%

Assume that $\mbox{Right}_G^H(Z) \in \calf_H$ for all finite subsemigroups $H$ of $G$.
Then $Z$ has a $\calc \calj$-simplicial approximation.

%
\end{proposition}

\begin{proof}
Assume without loss of generality that for every $A \in \calf_H$, $\mbox{Ind}_H^G A \in \calc \calj_0$ 
appears as a summand 
of some $B \in \calc \calj_1$; if not so, 
simply restrict $\calf_H$ to a 
smaller set.
Notice that our assumptions imply the validity of the assumption of Corollary
\ref{corollaryBrownrepres}, see Remark \ref{remarkCI0}.

Apply Corollary \ref{corollaryBrownrepres} to $B:= Z$ and obtain an object $P \in \langle \calc \calj_0 \rangle \subseteq KK^G$
and a morphism $D \in KK^G(P,Z)$ (where $P:= \tilde B$ and $D:= f$ from Corollary \ref{corollaryBrownrepres})
such that
\begin{equation}   \label{diracinduced}
D_*: KK^G(A,P) \rightarrow KK^G(A,Z)
\end{equation}
is a group isomorphism for all $A \in \langle \calc \calj_0 \rangle$.
We want to show that $\mbox{Right}_G^H(D)$ is an isomorphism for every compact 
subinverse semigroup $H$ of $G$
(see Definition \ref{defDiracMorphism});
so fix any such $H$.
To this end it is sufficient to show that both induced group homomorphisms
$$\mbox{Right}_G^H(D)_*:KK^H(\mbox{Right}_G^H \,P,\mbox{Right}_G^H \,P) \rightarrow KK^H(\mbox{Right}_G^H \,P,\mbox{Right}_G^H \,Z)$$
and
$$
\mbox{Right}_G^H(D)_*:KK^H(\mbox{Right}_G^H \,Z, \mbox{Right}_G^H \,P) \rightarrow KK^H(\mbox{Right}_G^H \,Z, \mbox{Right}_G^H \,Z)
$$
are isomorphisms.
For verifying that the first stated $\mbox{Right}_G^H(D)_*$ is an isomorphism
it is sufficient to show that
\begin{equation}  \label{diracinduced2}
\mbox{Right}_G^H(D)_*:KK^H(\mbox{Right}_G^H \,A, \mbox{Right}_G^H \,P) \rightarrow KK^H(\mbox{Right}_G^H \, A, \mbox{Right}_G^H \,Z)
\end{equation}
is an isomorphism for all $A \in \calc \calj_0$ because
$P \in \langle \calc \calj_0 \rangle$.

We consider first the case that $A \in \calc \calj_1$. 
Applying on both ends of (\ref{diracinduced2})
the adjointness relation between $\mbox{Ind}$ and $\mbox{Right}$,
(\ref{diracinduced2})
turns to
\begin{equation}  \label{diracinduced3}
D_* : KK^G(\mbox{Ind}_H^G \mbox{Right}_G^H \,A, P) \rightarrow KK^G(\mbox{Ind}_H^G \mbox{Right}_G^H \, A, Z).
\end{equation}
But since $\mbox{Ind}_H^G \mbox{Right}_G^H \,A$ is in $\calc \calj_1$, 
and hence a
countable direct sum of objects in $\calc \calj_0$
by assumption,
$\mbox{Ind}_H^G \mbox{Right}_G^H \,A$ is also in $\langle \calc \calj_0 \rangle$ by Definitions \ref{defLocSubcat} and \ref{defGeneratedLocCat},
and hence (\ref{diracinduced3}) and so (\ref{diracinduced2}) are isomorphisms by (\ref{diracinduced}).
%

We may write $A \cong \bigoplus_j B_j$
$KK^G$-equivalently by assumption, where $B_j \in \calc \calj_0$.
The canonical injection and projection $\mbox{Right}_G^H \,B_j \stackrel{p}{\rightarrow} \mbox{Right}^H_G \,A
\stackrel{f}{\rightarrow} \mbox{Right}^H_G \, B_j$ to the $j$th coordinate satisfy $id =(f p)^* = p^* f^*$, and an 
analog diagram
as in (\ref{commdiagram}) shows that the isomorphism (\ref{diracinduced2}) is also an isomorphism for $A:=B_j$.
By varying over all $A \in \calc \calj_1$ and all coordinate projections $j$,
we see
that (\ref{diracinduced2}) is an isomorphism for all $A \in \calc \calj_0$.

%
That the second homomorphism $\mbox{Right}_G^H(D)_*$ is an isomorphism follows from
(\ref{diracinduced}) applied to $A:=\mbox{Ind}_H^G \mbox{Right}_G^H \,Z \in
\calc \calj_0$.
%
%
\end{proof}

\if 0
We remark that in this section we theoretically could enlarge the sets $\calf_H$ to valid bigger sets;
to be precise, Definition \ref{defRightadjointtoInd} works also without the irrelevant finite dimensionality and
commutativity restriction imposed on the elements of $\calf_H$; however this would make the validity of the countability criterion of
Corollary \ref{corollaryBrownrepres} more unlikely.
It can be said, that beside the difficult countability criterion the assumptions of the next proposition can be
easily verfied for $Z=\varepsilon(E)$.
\fi


The last proposition might offer a chance for defining a Baum--Connes map:

\begin{remark}    \label{remarkBC}
{\rm
In \cite{burgiNoteBaumConnes} we have defined a so-called fibered restriction functor $\mbox{R}_G^H:KK^G \rightarrow KK^H$
which is right adjoint to the induction functor on the subclass of fibered $G$-algebras of $KK^G$.
If $\calf_H$ should cover all finite dimensional commutative $H$-algebras (which appears natural)
and $\mbox{Right}_G^H$ should coincide with $\mbox{R}_G^H$  
then evidently $\mbox{Right}_G^H(Z) \in \calf_H$ for $Z=C_0(X)$.
If the other natural assumptions of Proposition \ref{propDiracMorphism} should hold true
then its application to $Z=C_0(X)$ would
yield a $\calc \calj$-simplicial approximation  and thus a Baum--Connes
map for all coefficient algebras $A$
(by tensoring a $\calc \calj$-simplicial approximation $D$ 
for $C_0(X)$ with
$A$, that is, forming $D \otimes^{C_0(X)} A$); see
\cite{meyernest} or \cite[Section 10]{burgiNoteBaumConnes} for the concept.
%
%
The {\em Baum--Connes map} with coefficient algebra $A$ would then be defined to be the homomorphism $K(B \widehat \rtimes G) \rightarrow
K(A \widehat \rtimes G)$ (Sieben's crossed product) induced by taking the Kasparov product with the element $\widehat{j^G}(D)
\in KK(B \widehat \rtimes G,A \widehat \rtimes G)$ (descent homomorphism) for any $\calc \calj$-simplicial
approximation $D \in KK(B,A)$ of $A$.
}
\end{remark}





\if 0
\section{The Baum--Connes map}

\label{sectionBaumconnes}


In this section we collect the final steps of the Baum--Connes theory for groups
as developed in \cite{meyernest}.
Note that all proofs here are short and elementary.
Our approach for inverse semigroups fails in so far as the restriction functor
ist not the right adjoint functor - if it even exists - for the induction functor,
and one had to choose the right adjoint functor instead of the restriction functor everywhere here in this section, for example in
Definition \ref{defWeaklycontractible}.
The second obstacle is that it is not evident that $\langle \calc \cali\rangle$ is stable
under taking tensor products as formulated in Lemma \ref{lemmaCIlocalizing}. This aggravates the definition  of a Baum--Connes map
with coefficients.

For clarifying
wether $\langle \calc \cali \rangle$ is stable under tensor products 
it would be necessary and sufficient 
to show that
the second summand $(1-p)(\mbox{Ind}_H^G(A) \otimes B)$ of (\ref{decomposIndHGtensor})
is also in $\langle \calc \cali \rangle$, because $\langle \calc \cali \rangle$ is thick
by Definition \ref{defGeneratedLocCat} and hence
contains all summands of its objects.
%
We finally remark that any good approximation $D \in KK^G(B,A)$ between some object
$B$ in $\langle \calc \cali\rangle$ and a given object $A$ in $KK^G$ defines readily
a Baum--Connes map $K(B \rtimes G) \rightarrow K(A \rtimes G)$ like in Definition \ref{defBaumConnesmap};
we do not necessarily need all the results like in Theorem \ref{theoremDiracexactsequence}.

From now on assume that $G$ is a countable discrete group!

\begin{definition}[Cf. Definition 4.1 in \cite{meyernest}]     \label{defWeaklycontractible}
{\rm
Call an object $A$ in $KK^G$ {\em weakly contractible} if $\mbox{Res}_G^H(A) = 0$ in $KK^H$ for all
compact subgroups $H \subseteq G$.
Write $\calc \calc \subseteq KK^G$ for the full subcategory of weakly contractible objects.
}
\end{definition}

\begin{lemma}[Cf. Lemma 4.2 of \cite{meyernest}]   \label{lemmaCIlocalizing}
$\langle \calc \cali \rangle$ and $\calc \calc$ are localizing subcategories of $KK^G$,
which are stable under taking tensor products $A \mapsto A \otimes B$ for all 
objects $B$ in $KK^G$.
\end{lemma}

\begin{lemma}[Cf. Proposition 4.4 of \cite{meyernest}]      \label{lemmaorthogonalcategory}
We have $\calc \calc = \langle \calc \cali \rangle^{\bot}$.
(For the orthogonal subcategory $\cals^\bot$ of a triangulated subcategory $\cals$ see \cite[Section 4.8]{krause}.)
\end{lemma}

\begin{theorem}[Cf. Theorem 4.7 of \cite{meyernest}]   \label{theoremDiracexactsequence}
Suppose $G$ is a group.
Let $D \in KK^G(P,\C)$ be a Dirac morphism with $P \in \langle \calc \cali \rangle$.
Then there exists an exact triangle
\begin{equation}   \label{diractriangle}
\xymatrix{
P \ar[r]^D & \C \ar[r]  & N \ar[r] & \Sigma^{-1} P}
\end{equation}
in ${KK}^G$ with $N \in \calc \calc$.
%
%
By tensoring this induces canonically
for every object $A$ in $KK^G$ 
an exact triangle
\begin{equation}   \label{exactsequencediracA}
\xymatrix{
P {\otimes} A \ar[r]^{D \otimes 1}  & \C {\otimes} A \ar[r]
& N {\otimes} A \ar[r] & \Sigma^{-1} (P {\otimes} A)}
\end{equation}
in ${KK}^G$ with $P \otimes A \in \langle \calc \cali \rangle$
and $N \otimes A \in \calc \calc$. The morphism $D \otimes 1$ is a $\calc \cali$-simplicial approximation
of $A$.
\end{theorem}


The importance of Theorem \ref{theoremDiracexactsequence} is that its validity is equivalent
to the existence of an exact localization functor $L:KK^G \rightarrow KK^G$
with kernel $\langle \calc \cali \rangle$, see for example Proposition 4.9.1 in \cite{krause}.
This implies the existence of an exact colocalization functor $\Gamma:KK^G \rightarrow KK^G$
with kernel $\calc \calc$ and an equivalence
$KK^G/\calc \calc \cong \langle \calc \cali \rangle$ in the opposite category of $KK^G$, see for example
Propositions 4.12.1, 4.10.1 and 4.11.1 in \cite{krause} together with Lemma \ref{lemmaorthogonalcategory}.
Confer also Proposition 2.9 and the remarks after Definition 4.2 in \cite{meyernest};
the complementarity condition of \cite[Definition 2.8]{meyernest} is satisfied
by \cite[Proposition 4.10.1]{krause} and Lemma \ref{lemmaorthogonalcategory},
which combine to $\mbox{Im} L = \langle \calc \cali \rangle^\bot = \calc \calc$.
%
%
%
%
\begin{definition}    \label{defBaumConnesmap}
{\rm
%
Suppose $G$ is a group.
Given an object $A$ in $KK^G$ choose a $\calc \cali$-simplicial approximation
$D \in KK^G(B,A)$ for it; 
for example $D \otimes 1$ of Theorem \ref{theoremDiracexactsequence}.
Then the {\em Baum--Connes assembly map 
via localization} (for the full crossed product) is defined to be the map
$$(j^G(D))_*:KK(\C, B \rtimes G) \rightarrow KK(\C, A \rtimes G): x \mapsto  j^G(D) \circ x,$$
where $j^G:KK^G(B,A) \rightarrow KK(B \rtimes G,A \rtimes A)$ denotes the descent homomorphism \cite{kasparov1988}. 
%
}
\end{definition}

Of course, we may interpret the Baum--Connes map as a map $K(B \rtimes G) \rightarrow K(A \rtimes G)$
by \cite[\S 6, Theorem 3]{kasparov1981}.
Definition \ref{defBaumConnesmap} does not depend on the choice of
the $\calc \cali$-simplicial approximation
$D$,
see Proposition 2.9.2 of \cite{meyernest}.


\fi

\bibliographystyle{plain}
\bibliography{references}

\end{document}